\documentclass{amsart}

\usepackage{amsmath}
\usepackage{amssymb}
\usepackage{amscd}
\usepackage{lscape}
\usepackage[all]{xy}

\newtheorem{theorem}{Theorem}[section]
\newtheorem{lemma}[theorem]{Lemma}

\newtheorem{corollary}[theorem]{Corollary}
\newtheorem{proposition}[theorem]{Proposition}

\theoremstyle{definition}
\newtheorem{definition}[theorem]{Definition}
\newtheorem{example}[theorem]{Example}

\theoremstyle{remark}
\newtheorem{remark}[theorem]{Remark}

\numberwithin{equation}{section}

\newcommand{\GL}{\operatorname{GL}}
\newcommand{\Gr}{\operatorname{Gr}}
\newcommand{\Fl}{\operatorname{Fl}}
\newcommand{\Grep}{\operatorname{Grep}}
\newcommand{\Frep}{\operatorname{Frep}}

\newcommand{\module}{\operatorname{mod}}

\newcommand{\Rep}{\operatorname{Rep}}
\newcommand{\Mod}{\operatorname{Mod}}
\newcommand{\Hom}{\operatorname{Hom}}

\newcommand{\Aut}{\operatorname{Aut}}
\newcommand{\End}{\operatorname{End}}
\newcommand{\Ext}{\operatorname{Ext}}

\newcommand{\Exp}{\operatorname{Exp}}
\newcommand{\Log}{\operatorname{Log}}

\newcommand{\Id}{\operatorname{Id}}

\newcommand{\D}[1]{\mathcal{#1}}
\newcommand{\B}[1]{\mathbb{#1}}
\newcommand{\SI}{\operatorname{SI}}
\newcommand{\SIR}{\operatorname{SIR}}
\newcommand{\Spec}{\operatorname{Spec}}
\newcommand{\Proj}{\operatorname{Proj}}

\newcommand{\br}[1]{\overline{#1}}
\newcommand{\innerprod}[1]{\langle#1\rangle}
\renewcommand{\ss}[2]{^{{#1}\cdot{\rm #2}}}
\newcommand{\sm}[1]{\left(\begin{smallmatrix}#1\end{smallmatrix}\right)}

\def\Xint#1{\mathchoice
{\XXint\displaystyle\textstyle{#1}}%
{\XXint\textstyle\scriptstyle{#1}}%
{\XXint\scriptstyle\scriptscriptstyle{#1}}%
{\XXint\scriptscriptstyle\scriptscriptstyle{#1}}%
\!\int}
\def\XXint#1#2#3{{\setbox0=\hbox{$#1{#2#3}{\int}$}
\vcenter{\hbox{$#2#3$}}\kern-.5\wd0}}

\makeatletter
\def\equalsfill{$\m@th\mathord=\mkern-7mu
\cleaders\hbox{$\!\mathord=\!$}\hfill
\mkern-7mu\mathord=$}
\makeatother

\begin{document}

\title{Counting using Hall Algebras I. Quivers}

\author{Jiarui Fei}
\address{Department of Mathematics, University of California, Riverside, CA 92521, USA}
\email{jiarui@math.ucr.edu}
\thanks{}

\subjclass[2010]{Primary 16G20; Secondary 16T05,14D20,13F60}

\date{}
\dedicatory{}
\keywords{Quiver Representation, Hall Algebra, Hopf Algebra, Character, Moduli space, Quiver Grassmannian, Quiver Flag, Quantum Cluster Algebra, Positivity, Geometric invariant, Generating Series}

\begin{abstract} We survey some results on counting the rational points of moduli spaces of quiver representations. We then make generalizations to Grassmannians and flags of quiver representations. These results have nice applications to the cluster algebra. Along the way, we use the full Hopf structure of the Hall algebra of a quiver.
\end{abstract}

\maketitle

\section*{Introduction}
In this series of notes, we try to develop an algorithm to count the points of representation varieties of basic algebras, and their moduli, Grassmannians, and so on. This seems to be a mission impossible, since any affine variety can be realized as a representation variety and any projective variety can be realized as a moduli of representations. I know the former fact from Birge Huisgen-Zimmermann and the latter from Lutz Hille.

To start this project, we first deal with the simplest case when the algebra is hereditary. Many beautiful results have been discovered by M. Reineke. The most important idea of his is to apply certain algebra morphism to various identities in the Hall algebra. The algebra morphism bears different names in the literature, to name a few, integration map and evaluation map. In our vocabulary, we call this type of morphisms Hall characters. In this note, we first systematically review several known results. We try to concentrate several papers into few pages, hoping that readers can penetrate into the Tao and quickly master the technique.

Hall algebra of a finitary hereditary category is in fact a bialgebra, so we can compose the known characters with the comultiplication to get new characters (Example \ref{Ex:double}). The most important identity above all in the Hall algebra may be the one related to the Harder-Narasimhan filtration (Lemma \ref {L:HNid}). Apply this new character to it, we get a counting formula for Grassmannians of quiver representations (Corollary \ref{C:QG}), which we think is mildly new. This formula has an interesting application in the cluster algebra. We derive a quantum cluster multiplication formula, which is obtained independently in \cite{DX}. We greatly simplify the proof of the positivity theorem in acyclic cases \cite[Theorem 1]{CR}. Many things above can be generalized to flags of quiver representations and beyond.


In this notes, we first collect some useful facts from \cite[Appendix]{CV} on points counting. Then in section \ref{S:basic}, we review some basics on the quiver representation, and related varieties. In section \ref{S:def}, we review the Ringel-Hall algebra and give our definition and examples on the Hall character. In section \ref{S:moduli}, we follow Reineke to count the points of moduli of quiver representations in a simple case. We then proceed to the case of Grassmannians and flags of quiver representations (Section \ref{S:grass},\ref{S:flag}). Corollary \ref{C:QG}, Theorem \ref{T:QG}, and Theorem \ref{T:QF} are just analogues of Corollary \ref{C:int1} and Corollary \ref{C:coprime1}. As an application, in section \ref{S:cluster} we give a very simple proof of the positivity theorem in acyclic cases (Corollary \ref{C:pos}) and derive a quantum cluster multiplication formula (Proposition \ref{P:CM}). In section \ref{S:functor}, we make generalization (Theorem \ref{T:morphism}) in a functorial direction. In Section \ref{S:Hopf}, we use the antipode to find the inverse of group-like elements. Lemma \ref{L:chiinv} is crucial to the last section. For completeness, the last section is a short survey for related generating series. We review an interesting result of \cite{MR} relating two different kinds of series. Finally, if one rewrites certain generating series in a proper way, then one gets the so-called quantum dilogarithm identities.

The first six sections were already written in 2009. We did not want to publish it until we feel that some subsequential work can be done. Although our original motivation came from the cluster algebra, now we are more inclined to settle our simple mind on counting. So we thank Fan Qin \cite{Q} for saving us a lot of space in Section \ref{S:cluster}. Sophisticated minds are always welcome to play in the motivic ground.

Let $X$ be a variety over $k=\B{F}_q$ and $X_{\br{k}}=X\otimes_k\br{k}$. We denote by $H_c^i(X,\B{Q}_l)$ the $i$-th $l$-adic cohomology group with compact support of $X_{\br{k}}$. The key method for counting rational points on $X$ is given by the Grothendieck-Lefschetz trace formula:
$$|X(\B{F}_{q^r})|=\sum_{i=0}^{2\dim X}(-1)^i Tr(F^r;H_c^i(X,\B{Q}_l)),$$
where $F$ is the Frobenius morphism $X_{\br{k}}\to X_{\br{k}}$.
$X$ is called {\em $l$-pure} if the eigenvalues of $F$ on $H_c^i(X,\B{Q}_l)$ have absolute value $q^{i/2}$. It is known that if $X$ is smooth and proper over $\br{k}$ then $X$ is $l$-pure.

$X$ is called {\em polynomial-count} (or has a counting polynomial) if there exists a (necessarily unique) polynomial $P_X=\sum a_i t_i\in\B{C}[t]$ such that for every finite extension $\B{F}_{q^r}/\B{F}_q$, we have $|X(\B{F}_{q^r})|=P_X(q^r)$.

\begin{lemma} {\em \cite[Proposition 6.1]{R2}} \label{L:Euler}
If $X$ is polynomial-count, its counting polynomial $P_X$ must lie in $\B{Z}[t]$. Its specialization at $q=1$ gives the $l$-adic Euler characteristic of $X_{\br{k}}$.
\end{lemma}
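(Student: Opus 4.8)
The plan is to play the Grothendieck--Lefschetz trace formula off against the linear independence of geometric sequences. Let $\mu_i(\alpha)$ denote the multiplicity of an algebraic number $\alpha$ as an eigenvalue of $F$ on $H_c^i(X,\B{Q}_l)$, and set $m_\alpha:=\sum_i(-1)^i\mu_i(\alpha)\in\B{Z}$. After fixing an abstract field isomorphism $\br{\B{Q}_l}\cong\B{C}$ (under which the integers $|X(\B{F}_{q^r})|$ and the rationals $q^i$ are fixed, so nothing is lost), the trace formula quoted above rewrites as
\[
|X(\B{F}_{q^r})|=\sum_{i=0}^{2\dim X}(-1)^i\operatorname{Tr}(F^r;H_c^i(X,\B{Q}_l))=\sum_{\alpha}m_\alpha\,\alpha^r,
\]
the last sum running over the finitely many distinct eigenvalues. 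The polynomial-count hypothesis gives the competing expansion $|X(\B{F}_{q^r})|=P_X(q^r)=\sum_i a_i\,(q^i)^r$, valid for every $r\ge 1$.

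The core of the argument is then formal. Since $q>1$, the powers $1,q,q^2,\dots$ are pairwise distinct, so after collecting terms both displays express the same sequence $(|X(\B{F}_{q^r})|)_{r\ge1}$ as a finite $\B{C}$-linear combination of the sequences $r\mapsto\lambda^r$ attached to distinct nonzero $\lambda$. These sequences are linearly independent, as the relevant Vandermonde determinant $\prod_k\lambda_k\prod_{k<l}(\lambda_l-\lambda_k)$ is nonzero. Matching coefficients therefore forces $a_i=m_{q^i}$ for every $i$ (interpreting $m_{q^i}=0$ when $q^i$ is not an eigenvalue) and $m_\alpha=0$ for every eigenvalue $\alpha$ that is not a power of $q$. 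The first relation already yields $a_i\in\B{Z}$, hence $P_X\in\B{Z}[t]$.

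For the value at $q=1$, I would write the $l$-adic Euler characteristic as a sum over \emph{all} eigenvalues:
\[
\chi(X_{\br{k}})=\sum_i(-1)^i\dim H_c^i(X,\B{Q}_l)=\sum_i(-1)^i\sum_\alpha\mu_i(\alpha)=\sum_\alpha m_\alpha.
\]
Splitting this sum according to whether $\alpha$ is a power of $q$ and invoking $m_\alpha=0$ in the remaining case, only the terms $\alpha=q^i$ survive, so $\chi(X_{\br{k}})=\sum_i m_{q^i}=\sum_i a_i=P_X(1)$.

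The one subtlety to keep in mind is the last step: eigenvalues of $F$ that are not powers of $q$ need not be absent: they merely contribute $m_\alpha=0$, i.e.\ they cancel between even and odd cohomology. It is precisely this cancellation that makes the naive alternating sum of Betti numbers agree with $P_X(1)$, and conflating "not an eigenvalue" with "zero signed multiplicity" is the error to avoid. Everything else reduces to the Vandermonde/linear-independence step and the observation that $m_\alpha$ is an integer by construction.
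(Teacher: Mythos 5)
Your argument is correct. Note that the paper itself gives no proof of this lemma---it is quoted from \cite[Proposition 6.1]{R2}---so there is nothing internal to compare against; but your route (Grothendieck--Lefschetz plus linear independence of the sequences $r\mapsto\lambda^r$ via the Vandermonde determinant, yielding $a_i=m_{q^i}\in\B{Z}$ and $m_\alpha=0$ for $\alpha$ not a power of $q$, then summing signed multiplicities to get $\chi(X_{\br{k}})=\sum_i a_i=P_X(1)$) is precisely the standard proof of Reineke's proposition. The two points you flag---that Frobenius eigenvalues are nonzero so the independence argument applies, and that eigenvalues which are not powers of $q$ are not absent but merely have vanishing signed multiplicity---are exactly the right subtleties, and you handle them correctly.
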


\begin{definition} The {\em Poincare polynomial} $P(X,q)\in\B{Z}[q^{1/2}]$ of $X$ is
$$P(X,q)=\sum_{i\geq 0} (-1)^i\dim H_c^i(X,\B{Q}_l)q^{i/2}.$$
\end{definition}

We will frequently use the following lemma from \cite[Appendix]{CV}
\begin{lemma} \label{L:polycount} Assume that $X$ is $l$-pure and polynomial-count. Then $P_X(q)=P(X,q)$.
In particular, $P_X(t)\in\B{N}[t]$.
\end{lemma}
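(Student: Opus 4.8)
The plan is to combine the Grothendieck-Lefschetz trace formula with purity and the linear independence of exponential functions. Write $b_i=\dim H_c^i(X,\B{Q}_l)$ and let $\alpha_{i,1},\dots,\alpha_{i,b_i}$ be the eigenvalues of $F$ on $H_c^i(X,\B{Q}_l)$; these are nonzero, since by purity $|\alpha_{i,j}|=q^{i/2}>0$. The trace formula then gives, for every $r\geq 1$,
$$|X(\B{F}_{q^r})|=\sum_{i=0}^{2\dim X}(-1)^i\sum_{j=1}^{b_i}\alpha_{i,j}^r,$$
while the polynomial-count hypothesis gives $|X(\B{F}_{q^r})|=P_X(q^r)=\sum_k a_k (q^k)^r$. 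Equating these yields an identity of exponential functions in $r$ that holds for all positive integers $r$.

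The first real step is to pass from this identity to a termwise comparison. Since there are only finitely many distinct bases (the $q^k$ with $a_k\neq0$ together with the $\alpha_{i,j}$), and the functions $r\mapsto\beta^r$ for distinct nonzero $\beta\in\B{C}$ are linearly independent (evaluate at $r=1,\dots,N$ and use a Vandermonde determinant), I may collect the coefficient of each base $\beta$ on both sides to obtain
$$\sum_{k:\,q^k=\beta} a_k=\sum_{i,j:\,\alpha_{i,j}=\beta}(-1)^i\qquad\text{for every }\beta\in\B{C}.$$

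Now purity enters to rigidify this comparison. Because $|\alpha_{i,j}|=q^{i/2}$ and $q>1$, the absolute value of an eigenvalue determines the unique cohomological degree in which it can occur; in particular eigenvalues from distinct degrees never coincide, so no cross-degree cancellation is possible. Fix an eigenvalue $\beta$ with $|\beta|=q^{i_0/2}$; all its occurrences lie in degree $i_0$ and carry the common sign $(-1)^{i_0}$. If $i_0$ is odd then $q^{i_0/2}$ is not an integer power of $q$, so $\beta\neq q^k$ for all $k$, the left-hand side vanishes, and the matching forces no such eigenvalue to exist; hence $b_i=0$ for all odd $i$. If $i_0=2k$ and $\beta\neq q^k$, the same matching (now with sign $+1$) again forces $\beta$ to be absent, so every eigenvalue in degree $2k$ equals $q^k$. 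Consequently $b_{2k}=a_k$ and $F$ acts on $H_c^{2k}(X,\B{Q}_l)$ as the scalar $q^k$.

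Assembling these facts, with $b_i=0$ for odd $i$ and $b_{2k}=a_k$, I conclude
$$P(X,q)=\sum_i(-1)^i b_i\,q^{i/2}=\sum_k a_k\,q^k=P_X(q),$$
and since each coefficient $a_k=b_{2k}$ is a nonnegative integer, $P_X(t)\in\B{N}[t]$. The main obstacle is exactly the rigidity step: the equality of two exponential functions must be upgraded to a termwise identity via linear independence of characters, and the essential content is the purity-driven observation that eigenvalue absolute values separate the cohomological degrees. This is what simultaneously kills the odd cohomology and pins the even-degree Frobenius action down to the scalar $q^k$, without which neither the identity $P_X=P(X,\cdot)$ nor the positivity of the coefficients would follow.
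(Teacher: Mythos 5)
Your proof is correct. The paper does not actually prove this lemma---it only cites \cite[Appendix]{CV}---and your argument (Grothendieck--Lefschetz for all $r$, linear independence of the functions $r\mapsto\beta^r$ via Vandermonde, then purity separating the cohomological degrees by absolute value to kill odd cohomology and force $F=q^k$ on $H_c^{2k}$ with $\dim H_c^{2k}=a_k$) is precisely the standard argument given in that cited appendix.
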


\section{Basics on Quiver Representation, Moduli, and Grassmannian}\label{S:basic}

We first provide some background on the moduli spaces of quiver representations. For a good introduction or more detailed treatment, we recommend \cite{Ki,R3}. Most constructions and results in this section are originally obtained assuming the base ring $k$ is an algebraically closed field. However, all the constructions can be naturally done over $\B{Z}$ and all proofs can be carried over almost word by word from the algebraically closed case \cite[Section 4,5]{M}. All the constructions over any field $k$ can be obtained by a change of base from $\B{Z}$. So let us assume our base ring $k$ to be $\B{Z}$.

Let $Q$ be a finite quiver with the set of vertices $Q_0$ and the set of arrows $Q_1$. If $a\in Q_1$ is an arrow, then $ta$ and $ha$ denote its tail and its head respectively. Fix a {\em dimension vector} $\alpha$, the space of all $\alpha$-dimensional representations is
$$\Rep_\alpha(Q):=\bigoplus_{a\in Q_1}\Hom(k^{\alpha(ta)},k^{\alpha(ha)}).$$
The group $G=\GL_\alpha:=\prod_{v\in Q_0}\GL_{\alpha(v)}$ acts on $\Rep_\alpha(Q)$ by the natural base change. Two representations $M,N\in\Rep_\alpha(Q)$ are isomorphic if they lie in the same $\GL_\alpha$-orbit. We denote the dimension vector of a representation $M$ by $\br{M}$.

A {\em weight} $\sigma$ is an integral linear functional on $\mathbb{Z}^{Q_0}$. Consider the {\em character} of $\GL_\alpha$:
$$\{g(v)\mid v\in Q_0\}\to\prod_{v\in Q_0}(\det g(v))^{\sigma(v)}.$$
We define the subgroup $\GL_\alpha^\sigma$ to be the kernel of the character map. The semi-invariant ring $\SIR_\alpha^\sigma(Q):=k[\Rep_\alpha(Q)]^{\GL_\alpha^\sigma}$ of weight $\sigma$
is $\sigma$-graded: $\oplus_{n\geqslant 0} \SI_\alpha^{n\sigma}(Q)$, where
$$\SI_\alpha^{\sigma}(Q):=\{f\in k[\Rep_\alpha(Q)]\mid g(f)=\sigma(g)f, \forall g\in\GL_\alpha\}.$$

A representation $M\in\Rep_\alpha(Q)$ is called {\em $\sigma$-semi-stable} if there is some non-constant $f\in \SIR_\alpha^\sigma(Q)$ such that $f(M)\neq 0$. It is called {\em stable} if the orbit $\GL_\alpha^\sigma M$ is closed of dimension $\dim\GL_\alpha^\sigma-1$. We denote the set of all $\sigma$-semi-stable (resp. $\sigma$-stable, $\sigma$-unstable) representations in $\Rep_\alpha(Q)$ by $\Rep_\alpha\ss{\sigma}{ss}(Q)$ (resp. $\Rep_\alpha\ss{\sigma}{st}(Q)$, $\Rep_\alpha\ss{\sigma}{un}(Q)$).
Based on Hilbert-Mumford criterion, King provides a simple criterion for the stability of a representation.
\begin{lemma} {\em \cite[Proposition 3.1]{Ki}}  \label{L:King} A representation $M$ is $\sigma$-semi-stable (resp. $\sigma$-stable) if and only if $\sigma(\br{M})=0$ and $\sigma(\br{L})\leqslant 0$ (resp. $<0$) for any non-trivial subrepresentation $L$ of $M$.
\end{lemma}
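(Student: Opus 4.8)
The plan is to deduce King's criterion from the Hilbert--Mumford numerical criterion applied to the linearization of the $\GL_\alpha$-action on $\Rep_\alpha(Q)$ determined by the character $\chi_\sigma\colon g\mapsto\prod_v(\det g(v))^{\sigma(v)}$. By the definition of semi-stability given above, $M$ is $\sigma$-semi-stable precisely when $M$ is GIT-semistable for this linearization, so by the numerical criterion it suffices to test one-parameter subgroups: $M$ is $\sigma$-semi-stable if and only if $\innerprod{\chi_\sigma,\lambda}\le 0$ for every one-parameter subgroup $\lambda\colon\B{G}_m\to\GL_\alpha$ for which $\lim_{t\to 0}\lambda(t)\cdot M$ exists in $\Rep_\alpha(Q)$, and $\sigma$-stable if and only if this inequality is strict for every such $\lambda$ not contained in the diagonal central torus.

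The key step is to reinterpret these one-parameter subgroups representation-theoretically. Up to conjugation, a one-parameter subgroup $\lambda$ of $\GL_\alpha=\prod_v\GL_{\alpha(v)}$ is the same data as a $\B{Z}$-grading $k^{\alpha(v)}=\bigoplus_n V^{(n)}(v)$ at each vertex; writing $V_{\ge m}=\big(\bigoplus_{n\ge m}V^{(n)}(v)\big)_{v\in Q_0}$ gives a descending filtration of the underlying graded space. I would show that $\lim_{t\to 0}\lambda(t)\cdot M$ exists if and only if each arrow map of $M$ is weight non-decreasing, equivalently each $V_{\ge m}$ is a subrepresentation of $M$; thus one-parameter subgroups admitting a limit correspond exactly to finite filtrations of $M$ by subrepresentations. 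A direct computation of the pairing then gives $\innerprod{\chi_\sigma,\lambda}=\sum_v\sigma(v)\sum_n n\dim V^{(n)}(v)$, and an Abel summation rewrites this as $\sum_m\sigma(\br{V_{\ge m}})$ up to the contribution of the trivial step. In particular the central subgroup $\lambda(t)=t\,\Id$, which acts trivially on $\Rep_\alpha(Q)$, pairs to $\innerprod{\chi_\sigma,\lambda}=\sigma(\br{M})$, and since both $\lambda$ and $\lambda^{-1}$ admit (constant) limits this forces $\sigma(\br{M})=0$ in the semistable case.

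With $\sigma(\br{M})=0$ in hand the equivalence becomes transparent. For one direction, given a subrepresentation $L\subseteq M$, the two-step filtration with weight $1$ on a lift of $L$ and weight $0$ on a complement is admissible and pairs to $\sigma(\br{L})$, so the numerical criterion forces $\sigma(\br{L})\le 0$, and strictly $<0$ in the stable case when $L$ is nontrivial. Conversely, for an arbitrary admissible $\lambda$ each cumulative subspace $V_{\ge m}$ is a subrepresentation, so every summand of $\sum_m\sigma(\br{V_{\ge m}})$ is $\le 0$ by hypothesis, yielding $\innerprod{\chi_\sigma,\lambda}\le 0$ and hence semistability; the strict version gives stability after checking that equality $\innerprod{\chi_\sigma,\lambda}=0$ for an admissible $\lambda$ forces each step $\br{V_{\ge m}}$ to be trivial or all of $\br{M}$, i.e.\ that $\lambda$ lies in the central torus.

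The main obstacle I anticipate is the bookkeeping in this last paragraph matching King's orbit-theoretic formulation of stability, namely that the orbit $\GL_\alpha^\sigma M$ is closed of dimension $\dim\GL_\alpha^\sigma-1$, with the numerical ``strict inequality off the center'' formulation. One must account for the one-dimensional diagonal stabilizer that is always present, which explains the $-1$, and verify that strict positivity of the Hilbert--Mumford weight for all noncentral admissible $\lambda$ is equivalent to closedness of the orbit of maximal possible dimension. The classification of admissible one-parameter subgroups and the Abel-summation identity are essentially routine once set up; the genuine content lies in this dictionary between the GIT and combinatorial descriptions, which is the heart of King's argument.
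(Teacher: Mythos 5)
The paper does not prove this lemma; it is quoted directly from King's paper \cite[Proposition 3.1]{Ki}, and your proposal is a faithful reconstruction of King's own argument: GIT semistability for the linearization by $\chi_\sigma$, the Hilbert--Mumford criterion, the dictionary between admissible one-parameter subgroups and filtrations of $M$ by subrepresentations, and the Abel-summation computation of the pairing are all exactly the steps of the original proof. The one place you stop short --- matching strict positivity of the weight off the central torus with the orbit-theoretic definition of stability (closed orbit of dimension $\dim\GL_\alpha^\sigma-1$) --- is correctly identified as the remaining bookkeeping and is handled in King's Propositions 2.5--2.6; your sketch is sound as it stands.
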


The GIT quotient with respect to a linearization of character $\sigma$ is
$$\Mod_\alpha^\sigma(Q):=\Proj(\oplus_{n\geqslant 0} \SI_\alpha^{n\sigma}(Q)),$$
which is projective over the ordinary quotient $\Spec(k[\Rep_\alpha(Q)]^{\GL_\alpha})$.
If we assume that $Q$ has no oriented cycles, then $k[\Rep_\alpha(Q)]^{\GL_\alpha}=k$, so the GIT quotient is projective.
The induced quotient map $q:\Rep_\alpha\ss{\sigma}{ss}(Q)\to\Mod_\alpha^\sigma(Q)$ is a {\em good categorical quotient} and the restriction of $q$ on $\Rep_\alpha\ss{\sigma}{st}(Q)$ is a {\em geometric quotient}.

There is a more general notion of stability. It is called $\mu$-stability for a {\em slope function} $\mu$, which by definition is $\sigma/\theta$ with $\sigma,\theta$ two weights and $\theta(\alpha)>0$ for any dimension vector $\alpha$.

\begin{definition} A representation $M$ is called $\mu$-semi-stable (resp. $\mu$-stable) if
$\mu(\br{L})\leqslant \mu(\br{M})$ (resp. $\mu(\br{L})<\mu(\br{M})$) for every non-trivial subrepresentation $L\subset M$.
\end{definition}

A slope function $\mu$ is called {\em coprime} to $\alpha$ if $\mu(\gamma)\neq \mu(\alpha)$ for any $\gamma<\alpha$. So if $\mu$ is coprime to $\alpha$, then there is no strictly semistable (semistable but not stable) representation of dimension $\alpha$. Note that the slope stability is related to the classical stability as follows. A representation $M$ is $\sigma$-(semi-)stable, then $M$ is also $(\sigma/\theta)$-(semi-)stable. Conversely, if $M$ is $(\sigma/\theta)$-(semi-)stable, and $\frac{\sigma(\br{M})}{\theta(\br{M})}=\frac{a}{b}$ where $a, b\in\B{Z}$ and $b>0$, then $M$ is $(b\sigma-a\theta)$-(semi-)stable. Note that the semi-stable objects with a fixed slope $\mu_0$ form an exact subcategory $\module_{\mu_0}(Q)$.

\begin{lemma} {\em \cite[Proposition 3.3]{DW2}}
\begin{enumerate}
\item[(1)]{\em Harder-Narasimhan filtration:}
Every representation $M$ has a unique filtration $0=M_0\subset M_1\subset\cdots\subset M_{m-1}\subset M_{m}=M$ such that $N_i=M_i/M_{i+1}$ is $\mu$-semi-stable and $\mu(\br{N}_i)>\mu(\br{N}_{i+1})$.\\
\item[(2)]{\em Jordan-Holder filtration:}
Every $\mu$-semi-stable representation $M$ has a filtration $0=M_0\subset M_1\subset\cdots\subset M_{m-1}\subset M_{m}=M$ such that $N_i=M_i/M_{i+1}$ is $\mu$-stable with the same slope. The set $\{N_i\}$ is uniquely determined.\\
\end{enumerate}
\end{lemma}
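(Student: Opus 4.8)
The plan is to treat the two parts by the standard Harder--Narasimhan/Jordan--H\"older machinery, the whole of which rests on the \emph{see-saw} property of the slope. Since $\sigma$ and $\theta$ are linear functionals on $\B{Z}^{Q_0}$ and dimension vectors are additive on short exact sequences, for any $0\to A\to B\to C\to 0$ one has $\theta(\br{B})=\theta(\br{A})+\theta(\br{C})$ and likewise for $\sigma$, with $\theta(\br{A}),\theta(\br{C})>0$. Hence $\mu(\br{B})$ is a strict convex combination of $\mu(\br{A})$ and $\mu(\br{C})$, which yields the equivalences $\mu(\br{A})<\mu(\br{B})\iff\mu(\br{B})<\mu(\br{C})$, and the same with $<$ replaced by $=$ or $>$. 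I would record these inequalities first, as every subsequent step is an application of them.

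For part (1), I would first establish existence of a \emph{maximal destabilizing subrepresentation}. Because every subrepresentation $L\subseteq M$ satisfies $\br{L}\leqslant\alpha=\br{M}$, and there are only finitely many such dimension vectors, the set of slopes $\{\mu(\br{L})\mid 0\neq L\subseteq M\}$ is finite; let $\mu_{\max}$ be its maximum and let $M_1$ be a subrepresentation of slope $\mu_{\max}$ of maximal dimension. Every $L\subseteq M_1$ has $\mu(\br{L})\leqslant\mu_{\max}=\mu(\br{M_1})$, so $M_1$ is $\mu$-semi-stable. For uniqueness of $M_1$, if $A$ is another subrepresentation of slope $\mu_{\max}$ and maximal dimension, I would apply the see-saw inequalities to $0\to A\cap M_1\to A\oplus M_1\to A+M_1\to 0$; since $\mu(\br{A\cap M_1})\leqslant\mu_{\max}$ and $\mu(\br{A\oplus M_1})=\mu_{\max}$, convexity forces $\mu(\br{A+M_1})\geqslant\mu_{\max}$, hence $=\mu_{\max}$, and then maximality of dimension gives $A+M_1=M_1=A$. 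Passing to $M/M_1$, whose dimension is strictly smaller, I would finish by induction; the required strict drop of slopes follows because any $N\supsetneq M_1$ with $\mu(\br{N/M_1})\geqslant\mu_{\max}$ would, by see-saw, have $\mu(\br{N})\geqslant\mu_{\max}$, contradicting the maximality of $M_1$.

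Uniqueness of the Harder--Narasimhan filtration then reduces to showing its bottom term is forced to be the maximal destabilizing subrepresentation. Given any such filtration and the maximal destabilizing $M_1$, the semistability of $M_1$ together with the descending slopes of the successive quotients forces any map $M_1\to M/M_1$ into the lower-slope part to vanish, since a semistable object of slope $\mu$ admits no nonzero morphism to a semistable object of strictly smaller slope, again by see-saw applied to its image and coimage; so $M_1$ lands in the first term of the given filtration, and comparing dimensions and slopes identifies them, with induction on $M/M_1$ finishing part (1). For part (2), I would work inside the subcategory $\module_{\mu_0}(Q)$ of semistable representations of a fixed slope $\mu_0$. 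The see-saw inequalities show that the kernel, image, and cokernel of any morphism between objects of slope $\mu_0$ are again semistable of slope $\mu_0$, so $\module_{\mu_0}(Q)$ is an abelian subcategory; boundedness of dimension vectors makes it of finite length, and its simple objects are exactly the $\mu$-stable representations of slope $\mu_0$. The existence and the uniqueness of the factor multiset of the Jordan--H\"older filtration is then the classical Jordan--H\"older theorem applied in this finite-length abelian category.

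The main obstacle is the construction and uniqueness of the maximal destabilizing subobject in part (1): the see-saw property itself is elementary, but the sum/intersection argument that a maximal-slope subrepresentation is unique, and the verification that it is the first term of every Harder--Narasimhan filtration, are where the real content lies. Once this is in hand, both the inductive completion of (1) and the reduction of (2) to the classical Jordan--H\"older theorem are routine.
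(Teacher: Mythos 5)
Your argument is correct, but note that the paper itself offers no proof of this lemma: it is quoted verbatim from \cite[Proposition 3.3]{DW2}, so there is nothing internal to compare against. What you give is the standard Harder--Narasimhan/Jordan--H\"older argument via the see-saw (mediant) property of $\mu=\sigma/\theta$ --- existence and uniqueness of the maximal destabilizing subrepresentation via the $A\cap M_1$, $A+M_1$ exact sequence (with the trivial degenerate case $A\cap M_1=0$ handled the same way), vanishing of morphisms from higher-slope to lower-slope semistables for uniqueness of the filtration, and the finite-length abelian subcategory $\module_{\mu_0}(Q)$ for part (2) --- which is essentially the proof in the cited reference.
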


For dimension vectors  $\beta,\gamma$, let $\alpha=\beta+\gamma$. We define $\Gr\binom{\alpha}{\gamma}:=\prod_{v\in Q_0}\Gr\binom{\alpha(v)}{\gamma(v)}$, where $\Gr\binom{\alpha(v)}{\gamma(v)}$ is the usual Grassmannian variety of $\gamma(v)$-dimensional subspaces of $k^{\alpha(v)}$. We define the variety
$$\Grep\binom{\alpha}{\gamma}=\{(M,L)\in \Rep_{\alpha}(Q) \times\Gr\binom{\alpha}{\gamma}\mid L\text{ is a subrepresentation of }M\}.$$

\begin{lemma} \label{L:Grep} {\em \cite[Section 3]{So}}
$p:\Grep\binom{\alpha}{\gamma}\to \Gr\binom{\alpha}{\gamma}$ is a vector bundle with fibre
$$\bigoplus_{a\in Q_1}(\Hom(k^{\gamma(ta)},k^{\gamma(ha)})\oplus \Hom(k^{\beta(ta)},k^{\alpha(ha)})).$$
In particular, $\Grep\binom{\alpha}{\gamma}$ is smooth and irreducible with \begin{equation}
\dim\Grep\binom{\alpha}{\gamma}=\dim\Rep_\alpha(Q)+\innerprod{\gamma,\beta}.
\end{equation}
\end{lemma}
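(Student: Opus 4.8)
\emph{Proof proposal.} The plan is to realize $p$ as the kernel of a surjective morphism of vector bundles over the base $\Gr\binom{\alpha}{\gamma}$, and then read off the fibre and the dimension. On each factor $\Gr\binom{\alpha(v)}{\gamma(v)}$ we have the tautological short exact sequence of bundles $0\to\D{S}_v\to\D{O}^{\alpha(v)}\to\D{Q}_v\to 0$ with $\rank\D{S}_v=\gamma(v)$ and $\rank\D{Q}_v=\beta(v)$; pulling back to the product gives such a sequence at every vertex. Over $\Gr\binom{\alpha}{\gamma}$ the representation space furnishes a trivial bundle $\underline{\Rep_\alpha(Q)}=\bigoplus_{a\in Q_1}\D{H}om(\D{O}^{\alpha(ta)},\D{O}^{\alpha(ha)})$, and the incidence condition ``$L$ is a subrepresentation of $M$'' says exactly that for every arrow $a$ the composite $\D{S}_{ta}\hookrightarrow\D{O}^{\alpha(ta)}\xrightarrow{M_a}\D{O}^{\alpha(ha)}\twoheadrightarrow\D{Q}_{ha}$ vanishes.

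First I would assemble these conditions into a single bundle map
$$\Phi\colon\underline{\Rep_\alpha(Q)}\longrightarrow\bigoplus_{a\in Q_1}\D{H}om(\D{S}_{ta},\D{Q}_{ha}),\qquad (M_a)_{a}\mapsto(\pi_{ha}\circ M_a\circ\iota_{ta})_{a},$$
where $\iota$ and $\pi$ denote the tautological inclusions and projections, so that $\Grep\binom{\alpha}{\gamma}=\Ker\Phi$ as a scheme over $\Gr\binom{\alpha}{\gamma}$. The crux is that $\Phi$ is surjective: fibrewise over a point $L$ the $a$-th component sends $\Hom(k^{\alpha(ta)},k^{\alpha(ha)})$ onto $\Hom(L(ta),k^{\alpha(ha)}/L(ha))$, since any map out of $L(ta)$ lifts to $k^{\alpha(ha)}$ and extends by zero off $L(ta)$. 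Because surjectivity of a morphism of locally free sheaves may be tested on fibres, $\Phi$ is an epimorphism of vector bundles and $\Ker\Phi$ is a subbundle; hence $p\colon\Ker\Phi\to\Gr\binom{\alpha}{\gamma}$ is a vector bundle.

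It then remains to identify the fibre. Over a point $L$, choosing a splitting $k^{\alpha(v)}=L(v)\oplus C(v)$ with $\dim C(v)=\beta(v)$ at each vertex, an element $M_a$ of $\Ker\Phi$ is precisely a map whose restriction to $L(ta)$ lands in $L(ha)$ while its restriction to $C(ta)$ is unconstrained; this yields the isomorphism onto $\Hom(k^{\gamma(ta)},k^{\gamma(ha)})\oplus\Hom(k^{\beta(ta)},k^{\alpha(ha)})$ recorded in the statement. (Invariantly, $\Ker\Phi_a$ is an extension of $\D{H}om(\D{S}_{ta},\D{S}_{ha})$ by $\D{H}om(\D{Q}_{ta},\D{O}^{\alpha(ha)})$.) For the dimension I would add the rank of this fibre, namely $\sum_a(\alpha(ta)\alpha(ha)-\gamma(ta)\beta(ha))=\dim\Rep_\alpha(Q)-\sum_a\gamma(ta)\beta(ha)$, to $\dim\Gr\binom{\alpha}{\gamma}=\sum_v\gamma(v)\beta(v)$; by the definition of the Euler form the total is $\dim\Rep_\alpha(Q)+\innerprod{\gamma,\beta}$. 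Smoothness and irreducibility are then inherited from the base, as a product of Grassmannians is smooth and irreducible and the total space of a vector bundle over such a base is again smooth and irreducible.

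The one point I expect to be worth watching is the surjectivity of $\Phi$: it is what upgrades $\Ker\Phi$ from a mere coherent subsheaf to a genuine subbundle, and it rests on the fibrewise lifting above together with the fact that surjectivity of a map of locally free sheaves can be checked on fibres. Everything after that is bookkeeping with the tautological sequences and the Euler form. Over the base ring $\B{Z}$ the words \emph{smooth} and \emph{irreducible} should be read on geometric fibres, but this costs nothing, since the whole construction is stable under base change to any field.
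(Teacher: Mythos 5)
Your proposal is correct, and it is the standard argument that the paper is invoking by citing Schofield rather than proving the lemma itself: fibrewise the incidence condition is the kernel of the surjective linear (bundle) map $M\mapsto(\pi_{ha}\circ M_a\circ\iota_{ta})_a$, whose kernel has the stated constant rank, and the dimension count via the additive Euler form checks out. The one point you flag --- surjectivity of $\Phi$, so that the kernel is a genuine subbundle --- is handled correctly by the lift-and-extend-by-zero argument, so there is nothing to add.
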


Now consider another projection $r:\Grep\binom{\alpha}{\gamma}\to\Rep_\alpha(Q)$.
\begin{definition} Let $r:\Grep\binom{\alpha}{\gamma}\to\Rep_\alpha(Q)$ be the other projection,
the {\em Grassmannian of quiver representation} $\Gr_\alpha(M)$ is the fibre $r^{-1}(M)$, and its subvarieties
$\Gr_L^N(M)$ is $\{L'\in\Gr_\alpha(M)\mid L'\cong L,M/L'\cong N\}$.
\end{definition}

A representation $T$ is called {\em rigid} if $\Ext_Q(T,T)=0$. It is shown in \cite{C} that the rigidity does not depend on the base field.
So if $k=\br{\B{F}}_q$, $\Gr_\gamma(T)$ is a general fibre of $r$, which is smooth by the Bertini theorem \cite[Theorem I.6.2.2]{Sh}.

There are projective varieties related to both the moduli and Grassmannian of quiver representations.
To construct them, we introduce the tensor product algebra $A_2(Q):=kQ\otimes kA_2$, where $A_2$ is the quiver of Dynkin type $A_2$.
The quiver $\hat{Q}$ of $A_2(Q)$ can be described as follows. We put quiver $Q$ horizontally and its copy $Q^c$ over $Q$, then draw for each vertex $v$ of $Q$ an arrow from its copy $v_c$ to $v$. We call the quiver $\hat{Q}$ the commutating quiver of $Q$. The algebra $A_2(Q)$ is the algebra of the quiver $\hat{Q}$ with the obvious commuting relations. A representation of $A_2(Q)$ consists of a triple $(L,M,f)$, where $L,M$ are representations of $Q$ and $f$ is a morphism from $L$ to $M$. A dimension vector of $(L,M,f)$ is of the form $(\gamma,\alpha)$, where $\gamma$ and $\alpha$ are the dimension vectors of $L$ and $M$. Let
$$\Rep_{\gamma\hookrightarrow\alpha}(A_2(Q)):=\{(L,M,f)\in \Rep_{(\gamma,\alpha)}(A_2(Q)) \mid f \text{ is injective}\},$$
then it is a principal $\GL_\gamma$-bundle over $\Grep\binom{\alpha}{\gamma}$. In particular, it is smooth and irreducible.

Assume that $\sigma(\alpha)=0$, and let $\mu=\sigma/\theta$, where $\theta=(1,1,\dots,1)$. Note that $M\in\Rep_\alpha(Q)$ is $\sigma$-(semi)stable if and only if $M$ is $\mu$-(semi)stable. We define a new slope function $\hat{\mu}=\hat{\sigma}/\hat{\theta}$ on $\hat{Q}$ as follows. The value of $\hat{\theta}$ on $Q_0$ and $Q^c_0$ are the same as the value of $\theta$ on $Q_0$. The value of $\hat{\sigma}$ on $Q_0$ is the same as $\sigma$, and its value on $Q^c_0$ is $\epsilon\theta$ for a very small positive number $\epsilon$.

We claim that if $M$ is $\sigma$-stable and $f$ is injective, then $(L,M,f)$ is $\hat{\mu}$-stable;, and if $M$ is $\sigma$-unstable or $f$ is not injective, then $(L,M,f)$ is $\hat{\mu}$-unstable. Suppose that $M$ is $\sigma$-stable and $f$ is injective. Let $(L',M',f|_{L'})$ be a subrepresentation of $(L,M,f)$, and $(\gamma',\alpha')$ be its dimension vector.
If $\alpha'<\alpha$, we have that $\hat{\mu}(L',M',f|_{L'})=\frac{\sigma(\alpha')+\epsilon\theta(\gamma')}{\theta(\alpha'+\gamma')}<0$ since $\epsilon$ is assume to be very small. In the meanwhile, $\hat{\mu}(L,M,f)=\frac{\sigma(\alpha)+\epsilon\theta(\gamma)}{\theta(\alpha+\gamma)}>0$. If $\alpha'=\alpha$ and thus $\gamma'<\gamma$, then $\hat{\mu}(\gamma',\alpha)=\frac{0+\epsilon\theta(\gamma')}{\theta(\alpha)+\theta(\gamma')}$, which is easily checked to be less than $\hat{\mu}(\gamma,\alpha)=\frac{0+\epsilon\theta(\gamma)}{\theta(\alpha)+\theta(\gamma)}$.
Conversely, suppose that $M$ is $\sigma$-unstable, and let $V$ be a subrepresentation destabilizing $M$, then $(0,V,0)$ also destabilizes $(L,M,f)$. If $f$ is not injective and let $K$ be its kernel, then one can easily check that $(K,0,0)$ destabilizes $(L,M,f)$. We have proved our claim. In particular, we have

\begin{lemma} \label{L:grepmoduli} If $\Rep_\alpha\ss{\sigma}{ss}(Q)$ contains exclusively $\sigma$-stable points, then so is the GIT-quotient $\Mod_{(\gamma,\alpha)}^{\hat{\mu}}(A_2(Q))$. Thus it is smooth and irreducible, parameterizing pairs $(M,L)$, where $M$ is $\sigma$-stable and $L$ its subrepresentation. The natural projection $\Rep_{(\gamma,\alpha)}(A_2(Q))$ $\twoheadrightarrow\Rep_\alpha(Q)$ induces a surjective map $\Mod_{(\gamma,\alpha)}^{\hat{\mu}}(A_2(Q))\to\Mod_\alpha^{\sigma}(Q)$, whose fibre over $M$ is exactly $\Gr_\gamma(M)$.
\end{lemma}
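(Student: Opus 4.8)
The plan rests entirely on the stability dichotomy proved in the claim immediately preceding the statement. Under the standing hypothesis that every $\sigma$-semistable point of $\Rep_\alpha(Q)$ is already $\sigma$-stable, the only gap in that claim---the case of a strictly $\sigma$-semistable $M$ with $f$ injective---simply cannot arise, since such $M$ do not exist. Hence I would first record the resulting clean dichotomy: a point $(L,M,f)$ of $\Rep_{(\gamma,\alpha)}(A_2(Q))$ is $\hat\mu$-stable exactly when $M$ is $\sigma$-stable and $f$ is injective, and is $\hat\mu$-unstable in every other case. In particular no strictly $\hat\mu$-semistable point exists, so $\Rep_{(\gamma,\alpha)}\ss{\hat\mu}{ss}(A_2(Q))=\Rep_{(\gamma,\alpha)}\ss{\hat\mu}{st}(A_2(Q))$, which is precisely the first assertion.

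Next I would identify the stable locus geometrically. By the dichotomy it is the set of triples $(L,M,f)$ with $f$ injective and $M$ $\sigma$-stable, that is, the preimage inside $\Rep_{\gamma\hookrightarrow\alpha}(A_2(Q))$ of the open $\sigma$-stable locus of $\Rep_\alpha(Q)$. Since $\Rep_{\gamma\hookrightarrow\alpha}(A_2(Q))$ is smooth and irreducible---being a principal $\GL_\gamma$-bundle over $\Grep\binom{\alpha}{\gamma}$, as established above---this preimage is a nonempty open subvariety, hence again smooth and irreducible. Because semistable coincides with stable, the GIT quotient restricts to a \emph{geometric} quotient on this locus; the group $\GL_{(\gamma,\alpha)}^{\hat\sigma}$ acts on stable points with stabilizer exactly the scalars $k^{\times}$, so the induced $\mathrm{PGL}$-type action is free. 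A Luna-slice argument (a free reductive action on a smooth variety) then yields that $\Mod_{(\gamma,\alpha)}^{\hat\mu}(A_2(Q))$ is smooth, and irreducibility is inherited as the image of an irreducible variety.

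For the parametrization and the forgetful map I would observe that the orbit of a $\hat\mu$-stable $(L,M,f)$ records exactly the isomorphism class of $M$ together with the subrepresentation $\Img f\subseteq M$ (using $f$ injective, so $L\cong\Img f$). The projection $\Rep_{(\gamma,\alpha)}(A_2(Q))\twoheadrightarrow\Rep_\alpha(Q)$, $(L,M,f)\mapsto M$, is equivariant for the compatible group actions and sends stable points to stable points, so it descends to a morphism $\Mod_{(\gamma,\alpha)}^{\hat\mu}(A_2(Q))\to\Mod_\alpha^{\sigma}(Q)$. The fibre over a class $[M]$ consists of the $\gamma$-dimensional subrepresentations of $M$ taken modulo $\Aut(M)$; but $M$ being $\sigma$-stable forces $\Aut(M)=k^{\times}$, and scalars fix every subspace, so the fibre is literally $\Gr_\gamma(M)$. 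Surjectivity is then equivalent to the nonemptiness of these fibres, i.e.\ to the existence of a $\gamma$-dimensional subrepresentation in each $\sigma$-stable $M$; the image is in any case closed by properness of the moduli map and contains every $[M]$ with $\Gr_\gamma(M)\neq\varnothing$.

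The main obstacle I anticipate is the smoothness of the quotient rather than any of the stability bookkeeping: one must pass from smoothness of the stable locus to smoothness of its orbit space, which requires the freeness of the projectivized action on stable points together with a slice-type argument, and relies on the (standard but not purely formal) facts that stable orbits are closed and that the scalar stabilizer acts trivially. By contrast, the fibre identification is clean once one notes that the $\Aut(M)=k^{\times}$-action on the quiver Grassmannian is trivial, and the first assertion is an immediate consequence of removing the strictly semistable case from the preceding claim.
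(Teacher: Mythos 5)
Your proposal is correct and follows essentially the same route as the paper, which deduces the lemma directly from the stability dichotomy established in the preceding paragraph (the strictly semistable case being excluded by hypothesis), with the smoothness, irreducibility, and fibre identification left implicit via the principal $\GL_\gamma$-bundle structure on $\Rep_{\gamma\hookrightarrow\alpha}(A_2(Q))$ over $\Grep\binom{\alpha}{\gamma}$. Your remark that surjectivity hinges on the nonemptiness of $\Gr_\gamma(M)$ is a fair observation of a point the paper glosses over, but otherwise there is no substantive difference in approach.
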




The above construction generalizes to the flag varieties of quiver representations. For any decomposition of dimension vector $\alpha=\sum_{i=1}^t\alpha_i$, we define $\Fl_{\alpha_t\cdots\alpha_1}:=\prod_{v\in Q_0}\Fl_{\alpha_t(v)\cdots\alpha_1(v)}$, where $\Fl_{\alpha_t(v)\cdots\alpha_1(v)}$ is the usual flag variety parameterizing flags of subspaces of dimension $\alpha_1(v)<\alpha_1(v)+\alpha_2(v)<\cdots<\alpha_1(v)+\cdots+\alpha_{t-1}(v)$ in $k^{\alpha(v)}$. To simplify the notation, we denote $\dot{\alpha}_i:=\sum_{j=1}^i\alpha_j$.

We define the variety:
$$\Frep_{\alpha_t\cdots\alpha_1}=\{(M,L_1,\dots,L_{t-1})\in \Rep_{\alpha}(Q) \times\Fl_{\alpha_t\cdots\alpha_1}\mid L_1\subset\cdots\subset L_t=M\}.$$
The following analogs of Lemma \ref{L:Grep} and Lemma \ref{L:grepmoduli} are easy exercises left for readers.

\begin{lemma}
$p:\Frep_{\alpha_t\cdots\alpha_1}\to \Fl_{\alpha_t\cdots\alpha_1}$ is a vector bundle with fibre
$$\bigoplus_{a\in Q_1}\bigoplus_{i=1}^{t}\Hom(k^{\alpha_{i}(ta)},k^{\dot{\alpha}_i(ha)}).$$
In particular, $\Frep_{\alpha_t\cdots\alpha_1}$ is smooth and irreducible.
\end{lemma}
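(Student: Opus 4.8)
The plan is to realize $p:\Frep_{\alpha_t\cdots\alpha_1}\to\Fl_{\alpha_t\cdots\alpha_1}$ as the total space of a sub-bundle of the trivial bundle $\Fl_{\alpha_t\cdots\alpha_1}\times\Rep_\alpha(Q)$, exactly as in the proof of Lemma \ref{L:Grep}. Over $\Fl:=\Fl_{\alpha_t\cdots\alpha_1}$ there is the tautological chain of subbundles $0=\D{R}_0\subset\D{R}_1\subset\cdots\subset\D{R}_{t-1}\subset\D{R}_t=\D{O}_{\Fl}\otimes k^\alpha$, graded by the vertices of $Q$, where $\D{R}_i$ has rank $\dot{\alpha}_i$ and $\D{R}_i/\D{R}_{i-1}$ has rank $\alpha_i$. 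By definition a point of $\Frep_{\alpha_t\cdots\alpha_1}$ lying over a flag $x\in\Fl$ is a representation structure $M=(M_a)_{a\in Q_1}\in\Rep_\alpha(Q)$ for which every $L_i=(\D{R}_i)_x$ is a subrepresentation, i.e. $M_a\big((\D{R}_i)_{x,ta}\big)\subseteq(\D{R}_i)_{x,ha}$ for all $a$ and all $i$. Thus $\Frep_{\alpha_t\cdots\alpha_1}$ is the total space of the subsheaf $\D{E}\subseteq\D{O}_{\Fl}\otimes\Rep_\alpha(Q)$ whose fibre is the space of flag-preserving tuples.

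To identify this fibre and to see that it has constant rank, I would work in a local frame adapted to the flag. Over a suitable affine open cover of $\Fl$ one can choose, at each vertex, a varying basis whose first $\dot{\alpha}_i(v)$ vectors span $(\D{R}_i)_v$. In such a basis the condition $M_a(L_i)\subseteq L_i$ says precisely that $M_a$ is block upper triangular with respect to the induced gradings at $ta$ and $ha$: a source basis vector lying in the $i$-th graded piece (there are $\alpha_i(ta)$ of these) must be sent into $(\D{R}_i)_{ha}$, which has dimension $\dot{\alpha}_i(ha)$. Hence the free matrix entries are organized, for each arrow $a$ and each $i=1,\dots,t$, into an arbitrary map $k^{\alpha_i(ta)}\to k^{\dot{\alpha}_i(ha)}$, reproducing the stated fibre $\bigoplus_{a\in Q_1}\bigoplus_{i=1}^{t}\Hom(k^{\alpha_{i}(ta)},k^{\dot{\alpha}_i(ha)})$ (the term $i=t$ being unconstrained since $\D{R}_t$ is everything). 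In particular the fibre dimension $\sum_{a}\sum_i\alpha_i(ta)\dot{\alpha}_i(ha)$ is independent of $x$.

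Equivalently, and this is the cleanest way to package local triviality, $\D{E}$ is the kernel of the morphism of vector bundles $\phi:\D{O}_{\Fl}\otimes\Rep_\alpha(Q)\to\bigoplus_{a\in Q_1}\bigoplus_{i=1}^{t-1}\Hom\big((\D{R}_i)_{ta},(\D{O}_{\Fl}\otimes k^{\alpha(ha)})/(\D{R}_i)_{ha}\big)$ sending $M_a$ to the tuple of induced maps; flag-preservation is exactly the vanishing of all of these. The fibrewise computation above shows $\phi$ has constant rank, so $\D{E}=\ker\phi$ is a sub-bundle of the stated rank and $p$ is a vector bundle. Finally, $\Fl_{\alpha_t\cdots\alpha_1}$ is a product of ordinary partial flag varieties, hence smooth, projective and irreducible; since the total space of a vector bundle over a smooth irreducible base is again smooth and irreducible, so is $\Frep_{\alpha_t\cdots\alpha_1}$. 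The only point that needs genuine (though routine) care is the constant-rank verification, i.e. the adapted-basis count; once that is in hand the vector-bundle conclusion is formal. An alternative, entirely parallel route is induction on $t$ via the forgetful map $(M,L_1,\dots,L_{t-1})\mapsto(M,L_{t-1})$ to $\Grep\binom{\alpha}{\dot{\alpha}_{t-1}}$, realizing $\Frep_{\alpha_t\cdots\alpha_1}$ as a relative flag bundle of the tautological subrepresentation and invoking Lemma \ref{L:Grep} at each step; this trades the explicit frame computation for the bookkeeping of a relative construction.
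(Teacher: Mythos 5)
Your proof is correct and is precisely the argument the paper intends: it leaves this lemma as an ``easy exercise'' analogous to Lemma \ref{L:Grep} (whose proof is in Schofield's paper), namely realizing $\Frep_{\alpha_t\cdots\alpha_1}$ as a sub-bundle of the trivial bundle $\Fl_{\alpha_t\cdots\alpha_1}\times\Rep_\alpha(Q)$ cut out by the flag-preservation conditions and counting the free block entries in an adapted frame. Your fibre computation specializes correctly to the $t=2$ case of Lemma \ref{L:Grep}, and the constant-rank/kernel packaging together with the inductive alternative are both sound.
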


\begin{definition} Let $r:\Frep_{\alpha_t\cdots\alpha_1}\to\Rep_\alpha(Q)$ be the other projection, the {\em flag variety of $M$} $\Fl_{\alpha_t\cdots\alpha_1}(M)$ is the fibre $r^{-1}(M)$, and its subvarieties
$\Fl_{N_t,\dots,N_1}(M)$ is $\{(L_1,\dots,L_{t-1})\in\Fl_{\alpha_t\cdots\alpha_1}(M)\mid L_{i}/L_{i-1}\cong N_i\}$, where $L_0=0$ and $L_t=M$.
\end{definition}

Let $A_t$ be the Dynkin quiver $\bullet\to\bullet\to\cdots\to\bullet$ of type $A_t$, and $A_t(Q)$ be the tensor algebra $kA_t\otimes kQ$.
We define a similar slope function $\hat{\mu}=\hat{\sigma}/\hat{\theta}$ on $\hat{Q}$ as before. The value of $\hat{\theta}$ on $Q_0$ and all its copies are the same as the value of $\theta$ on $Q_0$. The value of $\hat{\sigma}$ on $Q_0$ is the same as $\sigma$, and its value on any other copy of $Q_0$ is $\epsilon\theta$ for a very small positive number $\epsilon$.

\begin{lemma} If $\Rep_\alpha\ss{\sigma}{ss}(Q)$ contains exclusively $\sigma$-stable points, then the moduli $\Mod_{(\dot{\alpha}_1,\dots,\dot{\alpha}_{t})}^{\hat{\mu}}(A_t(Q))$ is smooth and irreducible, and its induced map to $\Mod_\alpha^{\sigma}(Q)$ from the natural projection is surjective with fibre $\Fl_{\alpha_t,\dots,\alpha_1}(M)$ over $M$.

\end{lemma}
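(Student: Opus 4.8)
The plan is to imitate verbatim the argument given just before Lemma~\ref{L:grepmoduli} for the algebra $A_2(Q)$, the only difference being that a single injective map is replaced by a chain of $t-1$ injections and the Grassmannian by the flag variety. Writing a representation of $A_t(Q)$ of dimension vector $(\dot{\alpha}_1,\dots,\dot{\alpha}_t)$ as a chain $R=(L^{(1)}\to\cdots\to L^{(t)})$ of representations of $Q$, the heart of the proof is the claim that $R$ is $\hat{\mu}$-stable if and only if the top term $M:=L^{(t)}$ is $\sigma$-stable and every structure map $L^{(i)}\to L^{(i+1)}$ is injective; equivalently, $R$ encodes a genuine flag $L_1\subset\cdots\subset L_t=M$ of subrepresentations of a $\sigma$-stable $M$. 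Once this is established, everything else is formal GIT and descent.

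First I would record the slope formula. Since $\sigma(\alpha)=0$ and $\hat{\sigma}$ equals $\epsilon\theta$ on every non-top copy of $Q_0$, for $R$ itself
\[
\hat{\mu}(R)=\frac{\epsilon\sum_{i=1}^{t-1}\theta(\dot{\alpha}_i)}{\sum_{i=1}^{t}\theta(\dot{\alpha}_i)}>0,
\]
while for a subrepresentation $R'$ of dimension vector $(\beta_1,\dots,\beta_t)$ one has $\hat{\mu}(R')=\bigl(\sigma(\beta_t)+\epsilon\sum_{i<t}\theta(\beta_i)\bigr)/\sum_{i\le t}\theta(\beta_i)$. I would then fix the finitely many admissible profiles $(\beta_1,\dots,\beta_t)$ with $\beta_i\le\dot{\alpha}_i$ and choose $\epsilon>0$ small enough to work for all of them simultaneously, carrying out the case analysis on the top component $\beta_t$. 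If $\beta_t=0$, injectivity of the composites $L^{(i)}\hookrightarrow L^{(t)}$ forces $R'=0$. If $0<\beta_t<\alpha$, King's criterion (Lemma~\ref{L:King}) gives $\sigma(\beta_t)<0$, so for small $\epsilon$ the numerator is negative and $\hat{\mu}(R')<0<\hat{\mu}(R)$. If $\beta_t=\alpha$, then $\sigma(\beta_t)=0$, the factor $\epsilon$ cancels, and $\hat{\mu}(R')<\hat{\mu}(R)$ reduces to the monotonicity of $x\mapsto x/(\theta(\alpha)+x)$ together with $\sum_{i<t}\theta(\beta_i)<\sum_{i<t}\theta(\dot{\alpha}_i)$. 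Conversely, if some map $f_i$ has kernel $K\ne 0$, the subrepresentation supported only at the non-top vertex $i$ with component $K$ has slope exactly $\epsilon>\hat{\mu}(R)$; and if $M$ is $\sigma$-unstable, a destabilizing $V\subset M$ yields the subrepresentation $(0,\dots,0,V)$ of strictly positive slope, again exceeding $\hat{\mu}(R)=O(\epsilon)$. Either way $R$ is $\hat{\mu}$-unstable, proving the claim.

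Invoking the hypothesis that $\sigma$-semistability coincides with $\sigma$-stability in dimension $\alpha$, the above shows that the $\hat{\mu}$-semistable and $\hat{\mu}$-stable loci coincide, so $\Mod_{(\dot{\alpha}_1,\dots,\dot{\alpha}_t)}^{\hat{\mu}}(A_t(Q))$ is a geometric quotient. Because $\hat{\mu}$-stability forces all maps injective, this common locus is open in the injective locus $\Rep_{\dot{\alpha}_1\hookrightarrow\cdots\hookrightarrow\dot{\alpha}_t}(A_t(Q))$, which, as in the $A_2$ case, is a principal $\prod_{i<t}\GL_{\dot{\alpha}_i}$-bundle over the smooth irreducible $\Frep_{\alpha_t\cdots\alpha_1}$ (the flag analogue of Lemma~\ref{L:Grep}) and is therefore itself smooth and irreducible. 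Since stable points have stabilizer reduced to the scalars, the geometric quotient of this smooth irreducible variety is again smooth and irreducible. Finally, the projection to the top term descends to a map $\Mod^{\hat{\mu}}\to\Mod_\alpha^{\sigma}(Q)$ whose fibre over $[M]$ is the set of flags of subrepresentations of $M$ of the prescribed dimensions modulo $\Aut(M)=k^{*}$; since scalars fix every subrepresentation, this fibre is exactly $\Fl_{\alpha_t\cdots\alpha_1}(M)$, and the map is surjective onto the locus of $\sigma$-stable $M$ admitting such a flag.

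I expect the main obstacle to be purely bookkeeping: the uniform choice of $\epsilon$ and the multi-index case analysis in the slope comparison are heavier than in the $A_2$ setting, but they are organized entirely by the value of the top component $\beta_t$, and injectivity is precisely what eliminates the degenerate profiles with $\beta_t=0$. Everything downstream of the stability claim---the geometric-quotient, smoothness, and fibre statements---then follows formally exactly as in Lemma~\ref{L:grepmoduli}.
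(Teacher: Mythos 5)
Your proposal is correct and is exactly what the paper intends: the lemma is explicitly left as an ``easy exercise'' analogous to Lemma \ref{L:grepmoduli}, whose proof is the $\hat{\mu}$-stability claim worked out just before that lemma, and you faithfully generalize that argument (case analysis on the top component $\beta_t$ of a subrepresentation, uniform small $\epsilon$, destabilizing subobjects $(0,\dots,0,K,0,\dots,0)$ and $(0,\dots,0,V)$ in the converse direction, then the standard GIT/principal-bundle conclusions). Your added care about the uniform choice of $\epsilon$ and about surjectivity onto the locus of stable $M$ actually admitting a flag is a mild improvement on the paper's phrasing, not a deviation in method.
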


\section{Hall Characters} \label{S:def}

From now on, we will assume our field $k$ to be the finite field $\B{F}_q$ and all modules are finite-dimensional. 
We write $\innerprod{-,-}_a$ for the usual additive Euler form, and $\innerprod{-,-}:=q^{\innerprod{-,-}_a}$ for the multiplicative one.
For any three $kQ$-modules $U,V$ and $W$ with dimension vector $\beta,\gamma$ and $\alpha=\beta+\gamma$, we define
the {\em Hall number}
$$F_{UV}^W:=|\Gr_V^U(W)|,$$
and for any module $M$, we denote $a_M:=|\Aut_Q(M)|.$
Let $H(Q)$ be the space of all formal (infinite) linear combinations of isomorphism classes $[M]$ in $kQ$-$\module$.
\begin{lemma} {\em \cite[Lemma 2.2]{Hu}}
The completed {\em Hall algebra} $H(Q)$ is the associative algebra with multiplication
$$[U][V]:=\sum_{[W]}F_{UV}^W[W],$$ and unit $\eta:\B{C}\mapsto \B{C}[0]$.
\end{lemma}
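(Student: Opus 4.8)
The plan is to verify the two algebra axioms directly --- associativity of the product and the unit property of $[0]$ --- after first checking that the multiplication is well defined on the completion $H(Q)$. For well-definedness I would observe that $F_{UV}^W=0$ unless $\br{W}=\br{U}+\br{V}$, since a subrepresentation and its quotient have dimension vectors summing to that of the ambient module. Over $\B{F}_q$ there are only finitely many isomorphism classes of a given dimension vector, so for each fixed $[W]$ only finitely many pairs $([U],[V])$ can contribute to the coefficient of $[W]$ in a product of two (possibly infinite) combinations. Hence the coefficient-wise sums are finite, the product is a well-defined bilinear operation on $H(Q)$, and in particular each $[U][V]$ is a finite sum.

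The heart of the matter is associativity. Expanding both $([U][V])[W]$ and $[U]([V][W])$ and comparing the coefficient of a fixed class $[Z]$, associativity is equivalent to the identity
$$\sum_{[X]}F_{UV}^X F_{XW}^Z=\sum_{[Y]}F_{VW}^Y F_{UY}^Z$$
for all $U,V,W,Z$. I would prove this by a single double count of two-step filtrations of $Z$. Recall $F_{UV}^X=|\Gr_V^U(X)|$ counts subrepresentations of $X$ isomorphic to $V$ with quotient isomorphic to $U$, and likewise for the other factors. Via the lattice isomorphism theorem, the left-hand side counts chains $0\subseteq A\subseteq B\subseteq Z$ with $A\cong W$, $B/A\cong V$, and $Z/B\cong U$: the factor $F_{XW}^Z$ selects a submodule $A\cong W$ with $Z/A\cong X$, while $F_{UV}^X$ selects inside $Z/A\cong X$ a submodule $B/A\cong V$ whose remaining quotient $Z/B$ is $\cong U$. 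Summing over $[X]$ simply reorganizes this count by the isomorphism type of the top quotient $X\cong Z/A$. The right-hand side counts the very same chains, now grouped by the isomorphism type of the middle term $Y\cong B$: here $F_{UY}^Z$ chooses $B\cong Y$ with $Z/B\cong U$, and $F_{VW}^Y$ then chooses $A\cong W$ inside $B$ with $B/A\cong V$. Since both expressions enumerate the identical set of flags $A\subseteq B\subseteq Z$ with prescribed subquotients $W,V,U$ from bottom to top, they are equal.

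Finally, the unit axiom is immediate: $F_{0M}^W=|\Gr_M^0(W)|$ forces the submodule to be all of $W$ with $W\cong M$, so $F_{0M}^W=\delta_{[W],[M]}$, giving $[0][M]=[M]$; symmetrically $F_{M0}^W=\delta_{[W],[M]}$ gives $[M][0]=[M]$. The main technical obstacle is the bookkeeping in the associativity double count: one must argue carefully that the number of inner choices (of $B/A$ inside $Z/A$, resp. of $A$ inside $B$) depends only on the relevant isomorphism class, so that the two-factor products and their sums over $[X]$ and $[Y]$ genuinely refine one common enumeration of flags. Everything else reduces to the elementary finiteness estimates already noted.
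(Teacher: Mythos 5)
Your proof is correct and is exactly the standard argument that the paper delegates to \cite[Lemma 2.2]{Hu}: finiteness of coefficients from the grading by dimension vectors, associativity via the double count of two-step filtrations $A\subseteq B\subseteq Z$ with prescribed subquotients (grouped once by the isomorphism class of $Z/A$ and once by that of $B$), and the unit axiom from $F_{0M}^W=F_{M0}^W=\delta_{[W],[M]}$. Nothing further is needed.
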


Note that the algebra $H(Q)$ is naturally graded by the dimension vectors:
$H(Q)=\oplus_{\alpha}H_\alpha(Q)$, where $H_\alpha(Q)$ is the subspace of all formal linear combinations of $[M]\in\module_\alpha(Q).$

\begin{definition}
For any algebra $R$, an $R$-character of $H(Q)$ is an algebra morphism $c:H(Q)\to R$.
\end{definition}

\begin{example} {\em \cite[Lemma 6.1]{R1}} \label{Ex:int} Let $\D{P}_{\innerprod{Q}}$ be the completed quantum polynomial algebra with multiplication given by $t^\alpha t^\beta=\innerprod{\alpha,\beta}^{-1}t^{\alpha+\beta}$.
Then $\int:[M]\mapsto \frac{1}{a_M}t^\alpha$ is an $\D{P}_{\innerprod{Q}}$-character of $H(Q)$. The fact follows easily from a formula of Riedtmann.

\begin{lemma} {\em \cite[Proposition 2.3]{Hu}} $F_{UV}^W=\frac{|\Ext_Q(U,V)_W|}{|\Hom_Q(U,V)|}\frac{a_W}{a_U a_V}$, where $\Ext_Q(U,V)_W$ is the subspace of $\Ext_Q(U,V)$ representing extensions with middle term $W$.
\end{lemma}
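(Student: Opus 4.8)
The plan is to count, in two different ways, the set $E$ of all short exact sequences with fixed outer terms $V,W,U$; that is, the set of pairs $(f,g)$ with $f\colon V\to W$ and $g\colon W\to U$ fitting into an exact sequence $0\to V\xrightarrow{f}W\xrightarrow{g}U\to 0$. Comparing the two counts will produce the formula.

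First I would count $E$ geometrically, grouping the sequences by the subrepresentation $V'=\Img f\subseteq W$. By definition $V'\cong V$ and $W/V'\cong U$, so the number of possible images is exactly $|\Gr_V^U(W)|=F_{UV}^W$. For each fixed $V'$, choosing $f$ is the same as choosing an isomorphism $V\xrightarrow{\sim}V'$, which gives $a_V$ choices, and choosing $g$ is the same as choosing an isomorphism $W/V'\xrightarrow{\sim}U$, which gives $a_U$ choices. Hence $|E|=F_{UV}^W\,a_U a_V$.

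Second I would count $E$ homologically. Sending $(f,g)$ to the equivalence class of its sequence defines a map $E\to\Ext_Q(U,V)$ whose image is precisely $\Ext_Q(U,V)_W$, since a class is hit exactly when its middle term is isomorphic to $W$. The group $\Aut_Q(W)$ acts on $E$ by $\phi\cdot(f,g)=(\phi f,\,g\phi^{-1})$, and two sequences represent the same class iff they are equivalent iff they lie in one $\Aut_Q(W)$-orbit; thus the fibres of the map are the orbits, and the number of orbits is $|\Ext_Q(U,V)_W|$. The decisive step is the stabilizer computation: $\phi$ fixes $(f,g)$ iff $\phi f=f$ and $g\phi=g$, i.e.\ $\phi$ restricts to the identity on $\Img f$ and induces the identity on $W/\Img f$. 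Writing $\phi=\Id_W+\psi$, the first condition forces $\psi$ to vanish on $\Img f$ and hence factor through $g$, while the second forces $\Img\psi\subseteq\ker g=\Img f$; since $f$ is mono this yields $\psi=f\theta g$ for a unique $\theta\in\Hom_Q(U,V)$. Using $gf=0$ one checks $(\Id+f\theta g)(\Id+f\theta' g)=\Id+f(\theta+\theta')g$, so $\theta\mapsto\Id+f\theta g$ is an isomorphism from $(\Hom_Q(U,V),+)$ onto the stabilizer, which therefore has order $|\Hom_Q(U,V)|$ independently of the chosen point.

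Consequently every orbit has size $a_W/|\Hom_Q(U,V)|$, giving $|E|=|\Ext_Q(U,V)_W|\,a_W/|\Hom_Q(U,V)|$. Equating this with the first count $F_{UV}^W\,a_U a_V$ and solving for $F_{UV}^W$ produces the asserted formula. I expect the stabilizer identification to be the only real obstacle: one must verify carefully that the automorphisms fixing the sub and acting trivially on the quotient are exactly the $\Id+f\theta g$, and that the vanishing $gf=0$ is what makes this parametrization a group isomorphism. The remainder is bookkeeping with the two $\Aut$-actions and the orbit-counting identity.
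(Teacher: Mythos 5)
Your proof is correct and is essentially the standard argument for Riedtmann's formula; the paper itself gives no proof, deferring to \cite[Proposition 2.3]{Hu}, where the same double count of the set of short exact sequences (once via $F_{UV}^W a_U a_V$, once via the $\Aut_Q(W)$-orbits with stabilizer $\theta\mapsto\Id+f\theta g$ isomorphic to $(\Hom_Q(U,V),+)$) is the proof. No gaps: the stabilizer computation you flag as the delicate point is carried out correctly, including the use of $gf=0$ to get the group isomorphism.
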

\end{example}

To see more examples, we need to explore the structure of $H(Q)$ further. Define a comultiplication $\Delta:H(Q)\to H(Q)\otimes H(Q)$ by $$\Delta([W])=\sum_{[U][V]} F_{UV}^W\frac{a_Ua_V}{a_W}[U]\otimes[V]=\sum_{[U][V]}\frac{|\Ext_Q(U,V)_W|}{|\Hom_Q(U,V)|}[U]\otimes[V].$$

\begin{lemma} {\em \cite[Lemma 2.4]{Hu}} $H(Q)$ is a coassociative coalgebra with comultiplication $\Delta$ and counit $\epsilon([M])=\delta_{M,0}$.
\end{lemma}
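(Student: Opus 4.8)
The plan is to reduce coassociativity of $\Delta$ to the associativity of the Hall multiplication, which we already have, by carefully tracking the automorphism factors. Write $g_{UV}^W:=F_{UV}^W\,\frac{a_Ua_V}{a_W}$, so that $\Delta([W])=\sum_{[U],[V]}g_{UV}^W\,[U]\otimes[V]$. First I would check that $\Delta$ is well defined on each basis element: the coefficient $g_{UV}^W$ vanishes unless $\br{U}+\br{V}=\br{W}$, and over $\B{F}_q$ there are only finitely many isomorphism classes with a prescribed dimension vector, so the sum is finite and $\Delta$ respects the grading, sending $H_\alpha(Q)$ into $\bigoplus_{\beta+\gamma=\alpha}H_\beta(Q)\otimes H_\gamma(Q)$. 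In particular the completed tensor product causes no convergence problem and it suffices to verify the coalgebra axioms on each $[W]$.

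Next I would expand the two iterated comultiplications. Decomposing the left tensor factor gives
$$(\Delta\otimes\Id)\Delta([W])=\sum_{[A],[B],[C]}\Big(\sum_{[U]}g_{AB}^U\,g_{UC}^W\Big)[A]\otimes[B]\otimes[C],$$
while decomposing the right factor gives
$$(\Id\otimes\Delta)\Delta([W])=\sum_{[A],[B],[C]}\Big(\sum_{[V]}g_{BC}^V\,g_{AV}^W\Big)[A]\otimes[B]\otimes[C].$$
Thus coassociativity is equivalent to the scalar identity $\sum_{[U]}g_{AB}^U g_{UC}^W=\sum_{[V]}g_{BC}^V g_{AV}^W$ for all $A,B,C,W$.

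Now comes the key computation. Substituting $g_{UV}^W=F_{UV}^W\,a_Ua_V/a_W$, the left-hand side becomes $\frac{a_Aa_Ba_C}{a_W}\sum_{[U]}F_{AB}^U F_{UC}^W$ and the right-hand side becomes $\frac{a_Aa_Ba_C}{a_W}\sum_{[V]}F_{BC}^V F_{AV}^W$, with the \emph{same} prefactor. After cancelling $a_Aa_Ba_C/a_W$, the identity reduces to $\sum_{[U]}F_{AB}^U F_{UC}^W=\sum_{[V]}F_{BC}^V F_{AV}^W$, which is precisely the associativity $([A][B])[C]=[A]([B][C])$ read off on the coefficient of $[W]$ — both sides counting filtrations $0\subset W_1\subset W_2\subset W$ with $W_1\cong C$, $W_2/W_1\cong B$, and $W/W_2\cong A$. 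Since $H(Q)$ is an associative algebra, this holds, and $\Delta$ is coassociative.

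Finally I would verify the counit axioms. Because $a_0=|\Aut_Q(0)|=1$ and $F_{0V}^W=|\Gr_V^0(W)|$ counts subrepresentations $L\subseteq W$ with $W/L\cong 0$ and $L\cong V$, forcing $L=W$ and $V\cong W$, we get $g_{0V}^W=\delta_{[V],[W]}$, whence $(\epsilon\otimes\Id)\Delta([W])=\sum_{[V]}g_{0V}^W[V]=[W]$; the symmetric computation with $F_{U0}^W$ gives $(\Id\otimes\epsilon)\Delta([W])=[W]$. I expect no genuine obstacle here: the one point requiring care is the bookkeeping of the factors $a_Ua_V/a_W$, whose entire role is exactly to convert the associativity identity into the coassociativity identity, together with the grading argument that legitimizes manipulating the infinite sums in the completion.
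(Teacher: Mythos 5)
Your proof is correct, and it is the standard argument: the paper itself gives no proof of this lemma, merely citing \cite[Lemma 2.4]{Hu}, and your computation — showing that the normalization $a_Ua_V/a_W$ makes the prefactors of both iterated coproducts equal to $a_Aa_Ba_C/a_W$, so that coassociativity collapses to the associativity identity $\sum_{[U]}F_{AB}^UF_{UC}^W=\sum_{[V]}F_{BC}^VF_{AV}^W$ already available from the Hall algebra — is exactly what one finds in that reference. The finiteness observation (over $\B{F}_q$ there are finitely many isomorphism classes per dimension vector, so $\Delta$ is well defined on the completion) and the counit check via $F_{0V}^W=\delta_{[V],[W]}$ are both handled correctly.
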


In general, the multiplication and comultiplication are not naturally compatible in the sense that if we treat $H(Q)$ as a vector space with symmetric monoidal structure, then $\Delta: H(Q)\to H(Q)\otimes H(Q)$ is not an algebra morphism. However, if we twist the multiplication on $H(Q)\otimes H(Q)$ by
$$(U_1\otimes V_1)\cdot(U_2\otimes V_2)=\innerprod{\br{U}_1,\br{V}_2}^{-1}U_1U_2\otimes V_1V_2.$$ Then it follows from Green's formula \cite[Proposition 2.7]{Hu} that

\begin{lemma} {\em \cite[Theorem 2.6]{Hu}} $H(Q)$ is a bialgebra.
\end{lemma}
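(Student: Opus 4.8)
The plan is to verify the bialgebra axioms, the heart of which is that the comultiplication $\Delta$ is an algebra morphism from $H(Q)$ to $H(Q)\otimes H(Q)$ equipped with the twisted multiplication. The counit and unit compatibilities are routine and I would dispatch them first. Since $\epsilon([M])=\delta_{M,0}$ is supported on the zero module and dimension vectors are additive under the product, $\epsilon([U][V])=\sum_{[W]}F_{UV}^W\delta_{W,0}=\delta_{U,0}\delta_{V,0}=\epsilon([U])\epsilon([V])$, so $\epsilon$ is multiplicative; the twist factor $\innerprod{\br{U}_1,\br{V}_2}^{-1}$ plays no role here because $\innerprod{0,-}=1$. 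Similarly $\Delta([0])=[0]\otimes[0]$ and $\epsilon([0])=1$ show that the unit $\eta$ is a coalgebra morphism and that $\epsilon\circ\eta=\Id$.

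The substantive step is to prove $\Delta([U][V])=\Delta([U])\cdot\Delta([V])$, where the product on the right is the twisted one. First I would expand the left-hand side by multiplying and then comultiplying, so that the coefficient of $[A]\otimes[B]$ is
\[
\sum_{[W]}F_{UV}^W\,F_{AB}^W\,\frac{a_Aa_B}{a_W}.
\]
Next I would expand the right-hand side by comultiplying each factor, inserting the twist, and then multiplying the two tensor slots; the coefficient of $[A]\otimes[B]$ becomes the fourfold sum
\[
\sum_{[X_1],[X_2],[Y_1],[Y_2]}\innerprod{\br{X}_1,\br{Y}_2}^{-1}\,F_{X_1X_2}^U\,F_{Y_1Y_2}^V\,F_{X_1Y_1}^A\,F_{X_2Y_2}^B\,\frac{a_{X_1}a_{X_2}a_{Y_1}a_{Y_2}}{a_Ua_V}.
\]
The asserted bialgebra compatibility is then exactly the equality of these two expressions, which is the content of Green's formula \cite[Proposition 2.7]{Hu}.

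To match the two sides with the stated form of Green's formula, I would pass from Hall numbers to the homological quantities via Riedtmann's formula $F_{UV}^W=\frac{|\Ext_Q(U,V)_W|}{|\Hom_Q(U,V)|}\frac{a_W}{a_Ua_V}$ from Example \ref{Ex:int}; this is what clears the automorphism factors $a_M$ and reveals that the quadratic twist $\innerprod{\br{X}_1,\br{Y}_2}^{-1}$ is precisely the discrepancy between the $\Ext$- and $\Hom$-counts that Green's identity balances. The main obstacle is therefore purely bookkeeping: tracking the $a_M$-factors and the Euler-form twist so that the coefficient comparison aligns term-by-term with Green's formula, rather than any genuinely new geometric or homological input. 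Once that alignment is confirmed, all bialgebra axioms hold and the claim follows.
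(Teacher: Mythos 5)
Your proposal is correct and follows the same route as the paper, which likewise disposes of the unit/counit compatibilities as trivial and reduces the only substantive axiom --- that $\Delta$ is multiplicative for the twisted product on $H(Q)\otimes H(Q)$ --- to Green's formula \cite[Proposition 2.7]{Hu}. The coefficient identity you extract (after cancelling the common factor $a_Aa_B$ from both sides) is exactly the standard statement of Green's formula, so the bookkeeping you describe does close the argument.
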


%
%
%
%

\begin{example} \label{Ex:double}
Let $\D{P}_{\innerprod{Q^2}}=\D{P}_{\innerprod{Q}}\otimes \D{P}_{\innerprod{Q}}$ be the completed double quantum polynomial algebra with multiplication given by $(x^{\beta_1} y^{\gamma_1})(x^{\beta_2}y^{\gamma_2})=\frac{\innerprod{\gamma_1,\beta_2}}{\innerprod{\alpha_1,\alpha_2}}x^{\alpha_1}y^{\alpha_2}$, where $\alpha_1=\beta_1+\beta_2$ and $\alpha_2=\gamma_1+\gamma_2$. A direct calculation using Example \ref{Ex:int} can show that $\int\otimes\int:H(Q)\otimes H(Q)\to \D{P}_{\innerprod{Q^2}}$ is an algebra morphism.

\begin{proposition} \label{P:oint} $\int_\Delta=(\int\otimes\int)\circ\Delta$ is a $\D{P}_{\innerprod{Q^2}}$-character of $H(Q)$. We have a concrete formula: $\int_\Delta([W])={a_W}^{-1}\sum_\gamma|\Gr_\gamma(W)|x^{\alpha-\gamma}y^{\gamma}$ and the following identity: \begin{equation} \sum_{\gamma_1+\gamma_2=\gamma}\innerprod{\gamma_1,\beta_2}|\Gr_{\gamma_1}(U)||\Gr_{\gamma_2}(V)|=\sum_{[W]}\frac{|\Ext_Q(U,V)_W|}{|\Ext_Q(U,V)|}|\Gr_{\gamma}(W)|.
\end{equation}
\end{proposition}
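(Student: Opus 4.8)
The plan is to establish Proposition \ref{P:oint} in three stages: first verify that $\int_\Delta$ is a character, then derive the concrete formula for $\int_\Delta([W])$, and finally read off the asserted identity by comparing the two ways of computing a product in $\D{P}_{\innerprod{Q^2}}$. For the first stage, since $\int\otimes\int$ is an algebra morphism by Example \ref{Ex:double} and $\Delta$ is an algebra morphism from $H(Q)$ (with its given multiplication) to $H(Q)\otimes H(Q)$ equipped with the twisted multiplication by the bialgebra structure (the Lemma citing \cite[Theorem 2.6]{Hu}), the composition $\int_\Delta=(\int\otimes\int)\circ\Delta$ is automatically an algebra morphism, hence a $\D{P}_{\innerprod{Q^2}}$-character. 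The only subtlety I must check is that the twisted multiplication on $H(Q)\otimes H(Q)$ is precisely the one that $\int\otimes\int$ carries to the multiplication of $\D{P}_{\innerprod{Q^2}}$; that is, I need the twist factor $\innerprod{\br{U}_1,\br{V}_2}^{-1}$ in the tensor-square multiplication to match the factor $\frac{\innerprod{\gamma_1,\beta_2}}{\innerprod{\alpha_1,\alpha_2}}$ appearing in $\D{P}_{\innerprod{Q^2}}$ after applying $\int\otimes\int$ to each tensor factor.

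For the concrete formula, I would compute $\int_\Delta([W])$ directly from the definition of $\Delta$ and of $\int$. Applying $\int\otimes\int$ to $\Delta([W])=\sum_{[U],[V]}F_{UV}^W\frac{a_U a_V}{a_W}[U]\otimes[V]$ gives
\begin{equation*}
\int_\Delta([W])=\sum_{[U],[V]}F_{UV}^W\frac{a_U a_V}{a_W}\frac{1}{a_U}\frac{1}{a_V}x^{\br{U}}y^{\br{V}}=\frac{1}{a_W}\sum_{[U],[V]}F_{UV}^W\, x^{\br{U}}y^{\br{V}}.
\end{equation*}
The key observation is that summing the Hall numbers $F_{UV}^W=|\Gr_V^U(W)|$ over all isomorphism classes of subquotients with $\br{U}=\alpha-\gamma$ and $\br{V}=\gamma$ collects exactly the subrepresentations of $W$ of codimension vector $\alpha-\gamma$, i.e. $\sum_{[U],[V]:\,\br{V}=\gamma}F_{UV}^W=|\Gr_\gamma(W)|$. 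Regrouping the double sum by the dimension vector $\gamma=\br{V}$ then yields $\int_\Delta([W])=a_W^{-1}\sum_\gamma|\Gr_\gamma(W)|\,x^{\alpha-\gamma}y^{\gamma}$, as claimed.

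For the identity, I would exploit that $\int_\Delta$ is a character: it respects products, so $\int_\Delta([U][V])=\int_\Delta([U])\cdot\int_\Delta([V])$ in $\D{P}_{\innerprod{Q^2}}$. On the left, $[U][V]=\sum_{[W]}F_{UV}^W[W]$, and applying the concrete formula plus Riedtmann's formula (the Lemma citing \cite[Proposition 2.3]{Hu}) to rewrite $F_{UV}^W/a_W$ in terms of $|\Ext_Q(U,V)_W|$ produces the right-hand side $\sum_{[W]}\frac{|\Ext_Q(U,V)_W|}{|\Ext_Q(U,V)|}|\Gr_\gamma(W)|$ after extracting the coefficient of $x^{\alpha-\gamma}y^\gamma$. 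On the right, multiplying $\int_\Delta([U])$ and $\int_\Delta([V])$ using the multiplication rule of $\D{P}_{\innerprod{Q^2}}$ introduces the twist $\innerprod{\gamma_1,\beta_2}$ and, upon extracting the same coefficient, gives the left-hand side $\sum_{\gamma_1+\gamma_2=\gamma}\innerprod{\gamma_1,\beta_2}|\Gr_{\gamma_1}(U)||\Gr_{\gamma_2}(V)|$. I expect the main obstacle to be the careful bookkeeping of the quantum twisting factors: one must confirm that the factor $\frac{1}{|\Hom_Q(U,V)|}$ from Riedtmann's formula, together with $\innerprod{\gamma,\beta}=q^{\innerprod{\gamma,\beta}_a}$ and the Euler form identity $\innerprod{\br{U},\br{V}}_a=\dim\Hom_Q(U,V)-\dim\Ext_Q(U,V)$, conspire to cancel correctly so that the normalization on both sides matches and the $|\Ext_Q(U,V)|$ in the denominator emerges exactly as stated.
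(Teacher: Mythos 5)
Your proposal is correct and follows essentially the same route as the paper: the character property comes from composing the algebra morphisms $\Delta$ and $\int\otimes\int$, the concrete formula comes from applying $\int\otimes\int$ to $\Delta([W])$ and regrouping $\sum_{[U],[V],\,\br{V}=\gamma}F_{UV}^W=|\Gr_\gamma(W)|$, and the identity comes from equating $\int_\Delta([U])\int_\Delta([V])$ with $\int_\Delta([U][V])$ and extracting the coefficient of $x^{\alpha-\gamma}y^\gamma$, with the factor $\innerprod{\alpha_1,\alpha_2}=|\Hom_Q(U,V)|/|\Ext_Q(U,V)|$ converting Riedtmann's $|\Hom|$ denominator into the $|\Ext|$ denominator exactly as you anticipate. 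The only cosmetic difference is that you make explicit the compatibility check between the twisted multiplication on $H(Q)\otimes H(Q)$ and the multiplication on $\D{P}_{\innerprod{Q^2}}$, which the paper delegates to Example \ref{Ex:double}.
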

\begin{proof}\begin{align*}\int_\Delta([W]) & =(\int\otimes\int)\circ\Delta([W])\\
& =\sum_{[U][V]} F_{UV}^W\frac{a_Ua_V}{a_W}\int[U]\otimes\int[V]\\
& =\sum_{[U][V]} F_{UV}^W{a_W}^{-1}x^{\alpha-\br{V}}y^{\br{V}}\\
& ={a_W}^{-1}\sum_\gamma|\Gr_\gamma(W)|x^{\alpha-\gamma}y^{\gamma}.
\end{align*}
\begin{align*}& \int_\Delta[U]\int_\Delta[V]=\int_\Delta([U][V])\\
\Leftrightarrow & \sum_{\gamma_1}|\Gr_{\gamma_1}(U)|x^{\alpha_1-\gamma_1}y^{\gamma_1}\sum_{\gamma_2}|\Gr_{\gamma_2}(V)|x^{\alpha_2-\gamma_2}y^{\gamma_2}=a_Ua_V \int_\Delta\sum_{[W]}F_{UV}^W[W]\\
\Leftrightarrow & \sum_\gamma\sum_{\gamma_1+\gamma_2=\gamma}\frac{\innerprod{\gamma_1,\beta_2}}{\innerprod{\alpha_1,\alpha_2}}|\Gr_{\gamma_1}(U)||\Gr_{\gamma_2}(V)|x^{\alpha-\gamma}y^\gamma=\sum_\gamma\sum_{[W]}\frac{|\Ext_Q(U,V)_W|}{|\Hom_Q(U,V)|}|\Gr_{\gamma}(W)|x^{\alpha-\gamma}y^\gamma\\
\Leftrightarrow &
\sum_{\gamma_1+\gamma_2=\gamma}\innerprod{\gamma_1,\beta_2}|\Gr_{\gamma_1}(U)||\Gr_{\gamma_2}(V)|=\sum_{[W]}\frac{|\Ext_Q(U,V)_W|}{|\Ext_Q(U,V)|}|\Gr_{\gamma}(W)|.
\end{align*}\end{proof}
\end{example}


\section{Application to Moduli of Quiver Representations} \label{S:moduli}

We fix a slope function $\mu$. For a dimension vector $\alpha$, let $\chi_\alpha=\sum_{\br{M}=\alpha}[M]$ and $\chi_\alpha^{\rm ss}=\sum_{M\in\module_\alpha^{\rm ss}(Q)}[M]$. Our convention is that they contain zero representation $[0]$.
Since $\frac{1}{a_M}=\frac{|\D{O}_M|}{|\GL_\alpha|}$, we have that $\int \chi_\alpha=\frac{|\Rep_{\alpha}(Q)|}{|\GL_{\alpha}|}x^\alpha$. We denote the rational function $\frac{|\Rep_{\alpha}(Q)|}{|\GL_{\alpha}|}$ by $r_\alpha(q)$.

The existence of the Harder-Narasimhan filtration yields the following identity in the Hall algebra $H(Q)$.
\begin{lemma} {\em \cite[Proposition 4.8]{R1}} \label{L:HNid} $$\chi_\alpha=\sum_{} \chi_{\alpha_1}^{\rm ss}\cdot\dots\cdot\chi_{\alpha_s}^{\rm ss},$$ where the sum running over all decomposition $\alpha_1+\cdots+\alpha_s=\alpha$ of $\alpha$ into non-zero dimension vectors such that $\mu(\alpha_1)<\cdots<\mu(\alpha_s)$. In particular, solving recursively for $\chi_\alpha^{\rm ss}$, we get
\begin{equation}\label{eq:HallID} \chi_\alpha^{\rm ss}=\sum_* (-1)^{s-1}\chi_{\alpha_1}\cdot\cdots\chi_{\alpha_s},
\end{equation}
where the sum runs over all decomposition $\alpha_1+\cdots+\alpha_s=\alpha$ of $\alpha$ into non-zero dimension vectors such that $\mu(\sum_{l=1}^k\alpha_l)<\mu(\alpha)$ for $k<s$.
\end{lemma}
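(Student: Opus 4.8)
**The plan is to establish the first identity directly from the Harder–Narasimhan filtration, then invert it combinatorially to obtain the second.**

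First I would prove the primary identity $\chi_\alpha=\sum \chi_{\alpha_1}^{\rm ss}\cdot\dots\cdot\chi_{\alpha_s}^{\rm ss}$ by unwinding both sides as formal sums of isomorphism classes $[M]$ with $\br{M}=\alpha$, and checking that the coefficient of each $[M]$ agrees. On the left the coefficient is $1$. On the right, expanding the Hall product $\chi_{\alpha_1}^{\rm ss}\cdot\dots\cdot\chi_{\alpha_s}^{\rm ss}$ using the multiplication rule, the coefficient of $[M]$ counts the number of filtrations $0=M_0\subset M_1\subset\cdots\subset M_s=M$ whose successive quotients $M_i/M_{i-1}$ are $\mu$-semistable of dimension $\alpha_i$ with $\mu(\alpha_1)<\cdots<\mu(\alpha_s)$. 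Here I would invoke part (1) of Lemma~\ref{L:HNid}'s antecedent (the Harder–Narasimhan filtration result quoted from \cite{DW2}), which says every $M$ has a \emph{unique} such filtration with strictly decreasing slopes. The slopes $\mu(\alpha_i)$ are forced, so summing over all admissible decompositions $\alpha_1+\cdots+\alpha_s=\alpha$ picks out exactly this one filtration for each $M$, giving coefficient $1$. The main care here is bookkeeping: one must confirm that the Hall-product coefficient genuinely counts filtrations with the prescribed \emph{subquotients} being semistable (not merely subrepresentations), and that the uniqueness in the HN theorem matches the decreasing-slope condition exactly.

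Next I would solve recursively for $\chi_\alpha^{\rm ss}$. Isolating the $s=1$ term (the decomposition with a single part, contributing $\chi_\alpha^{\rm ss}$ itself), I can write
\begin{equation*}
\chi_\alpha^{\rm ss}=\chi_\alpha-\sum_{s\geq 2}\chi_{\alpha_1}^{\rm ss}\cdot\dots\cdot\chi_{\alpha_s}^{\rm ss},
\end{equation*}
where the sum is over genuine (length $\geq 2$) HN-decompositions. This expresses each $\chi_\alpha^{\rm ss}$ in terms of the $\chi_{\alpha_i}^{\rm ss}$ for proper sub-dimension-vectors, so one can substitute repeatedly. Because the slopes along any HN-decomposition are strictly increasing, the recursion is well-founded: each substitution replaces a semistable factor by a combination involving strictly smaller dimension vectors, and the process terminates.

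The main obstacle is the combinatorial inversion: verifying that after full substitution the surviving terms are exactly those decompositions satisfying the partial-sum condition $\mu(\sum_{l=1}^k\alpha_l)<\mu(\alpha)$ for $k<s$, with sign $(-1)^{s-1}$. The clean way to handle this is to recognize the first identity as a statement about the antipode-style inversion in the convolution monoid of the dimension-vector poset, or equivalently to argue by induction on $\alpha$: assuming the inverted formula for all proper sub-dimension-vectors, I would substitute the inductive expressions for the $\chi_{\alpha_i}^{\rm ss}$ with $i\geq 2$ into the isolated recursion above, and then perform a telescoping cancellation. The key combinatorial lemma is that for any decomposition $\alpha=\alpha_1+\cdots+\alpha_s$ into nonzero parts, the signed sum over all ways of coarsening it into an HN-type decomposition (strictly increasing slopes of the blocks) vanishes unless every proper initial partial sum has slope strictly less than $\mu(\alpha)$; this is a sign-reversing-involution argument on the refinement poset. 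Getting the boundary case of the involution to correspond precisely to the stated partial-sum inequalities is the delicate point, but it is purely formal and independent of the representation theory once the first identity is in hand.
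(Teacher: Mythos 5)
Your proposal is correct and follows the same route the paper takes (the paper itself only sketches this, citing \cite[Proposition 4.8]{R1}): the uniqueness of the Harder--Narasimhan filtration makes the coefficient of each $[M]$ in the product expansion equal to $1$, and the second identity is then a formal recursive inversion. Your flagged delicate points --- the sub-versus-quotient convention in $F_{UV}^W$ (which fixes whether the maximal-slope piece sits at the bottom of the filtration) and the sign-reversing involution identifying the surviving partial-sum condition --- are exactly the details handled in \cite{R1}, so nothing essential is missing.
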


Apply the Hall character $\int$ to the identity \eqref{eq:HallID}, then we obtain the formula:
\begin{corollary} {\em \cite[Corollary 6.2]{R1}} \label{C:int1}
$$\frac{|\Rep_\alpha^{\rm ss}(Q)|}{|\GL_\alpha|}=\sum_* (-1)^{s-1}\prod_{1\leqslant i<j\leqslant s}\innerprod{\alpha_i,\alpha_j}^{-1}\prod_{k=1}^s r_{\alpha_k}(q),$$
where the summation $*$ is the same as the above lemma.
\end{corollary}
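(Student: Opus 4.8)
The plan is to apply the Hall character $\int$ of Example \ref{Ex:int} directly to the identity \eqref{eq:HallID} of Lemma \ref{L:HNid} and then read off the coefficient of $t^\alpha$ in the quantum polynomial algebra $\D{P}_{\innerprod{Q}}$. Since $\int$ is an algebra morphism, it carries each product $\chi_{\alpha_1}\cdots\chi_{\alpha_s}$ in $H(Q)$ to the product $\int(\chi_{\alpha_1})\cdots\int(\chi_{\alpha_s})$ in $\D{P}_{\innerprod{Q}}$, so the entire computation reduces to evaluating $\int$ on the two sides of \eqref{eq:HallID} termwise.

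First I would compute the image of the left-hand side. By definition $\int(\chi_\alpha^{\rm ss})=\sum_{M\in\module_\alpha^{\rm ss}(Q)}\frac{1}{a_M}t^\alpha$, and using $\frac{1}{a_M}=\frac{|\D{O}_M|}{|\GL_\alpha|}$ together with the fact that the orbits $\D{O}_M$ partition $\Rep_\alpha^{\rm ss}(Q)$, this sum collapses to $\frac{|\Rep_\alpha^{\rm ss}(Q)|}{|\GL_\alpha|}t^\alpha$. This is exactly the semi-stable analogue of the identity $\int\chi_\alpha=r_\alpha(q)t^\alpha$ already recorded just before the lemma.

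Next I would evaluate the right-hand side. Each factor gives $\int(\chi_{\alpha_k})=r_{\alpha_k}(q)t^{\alpha_k}$, so it remains to compute the monomial product $t^{\alpha_1}t^{\alpha_2}\cdots t^{\alpha_s}$ in $\D{P}_{\innerprod{Q}}$. From the defining relation $t^\beta t^\gamma=\innerprod{\beta,\gamma}^{-1}t^{\beta+\gamma}$, a straightforward induction on $s$, using the additivity of the Euler form so that $\innerprod{\alpha_1,\alpha_2+\cdots+\alpha_s}^{-1}=\prod_{j\geqslant 2}\innerprod{\alpha_1,\alpha_j}^{-1}$, shows that each pair $i<j$ contributes precisely one factor $\innerprod{\alpha_i,\alpha_j}^{-1}$, yielding
$$t^{\alpha_1}\cdots t^{\alpha_s}=\prod_{1\leqslant i<j\leqslant s}\innerprod{\alpha_i,\alpha_j}^{-1}\,t^{\alpha}.$$
Collecting the scalar $\prod_{k=1}^s r_{\alpha_k}(q)$ and the sign $(-1)^{s-1}$ from \eqref{eq:HallID}, and comparing the coefficients of $t^\alpha$ on both sides, gives the asserted formula.

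There is no genuine obstacle here; the substance of the statement is already packaged into Lemma \ref{L:HNid} and into the fact that $\int$ is an algebra morphism, which is exactly what lets the product pass through the character factorwise. The only point requiring care is the bookkeeping of the quantum commutation factors, namely verifying that reordering the monomials in $\D{P}_{\innerprod{Q}}$ produces exactly $\prod_{i<j}\innerprod{\alpha_i,\alpha_j}^{-1}$ and that the multiplicativity $\innerprod{-,-}=q^{\innerprod{-,-}_a}$ over direct sums makes the induction close cleanly.
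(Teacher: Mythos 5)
Your proposal is correct and follows exactly the paper's route: the paper derives Corollary \ref{C:int1} precisely by applying the character $\int$ of Example \ref{Ex:int} to the identity \eqref{eq:HallID}, using that $\int$ is an algebra morphism, that $\int\chi_{\alpha_k}=r_{\alpha_k}(q)t^{\alpha_k}$, and that the quantum commutation relations produce the factor $\prod_{i<j}\innerprod{\alpha_i,\alpha_j}^{-1}$. The bookkeeping you flag is indeed the only content of the verification, and you handle it correctly.
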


We denote the above rational function by $r_\alpha^{\rm ss}(q)$. It follows from Lemma \ref{L:Euler} and \ref{L:polycount} that
\begin{corollary} {\em \cite[Theorem 6.7]{R1}} \label{C:coprime1} Assume that $\alpha$ is coprime to $\mu$, then $(q-1)r_\alpha^{\rm ss}(q)$ is actually an integral polynomial, which counts the rational points of $\Mod_\alpha^{\mu}(Q)$. If $Q$ has no oriented cycles, then $\Mod_\alpha^{\mu}(Q)$ is smooth and projective, so this polynomial has non-negative coefficients.
\end{corollary}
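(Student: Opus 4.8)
The plan is to convert the rational function $r_\alpha^{\rm ss}(q)$ into an honest point count of $\Mod_\alpha^\mu(Q)$ by an orbit-counting argument, and then to extract integrality and positivity from the purity lemmas of the introduction. First I would exploit coprimality: since $\mu$ is coprime to $\alpha$ there are no strictly semistable representations, so $\Rep_\alpha^{\rm ss}(Q)=\Rep_\alpha^{\rm st}(Q)$ and the quotient map onto $\Mod_\alpha^\mu(Q)$ is geometric. For a $\mu$-stable $M$, Schur's lemma forces $\End_Q(M)$ to be a finite division ring, hence a field $\B{F}_{q^{rd}}$ over $\B{F}_{q^r}$; but if $d\geq 2$ then $M$ would acquire an $\B{F}_{q^{rd}}$-structure, making $\alpha/d$ an integral dimension vector with $0<\alpha/d<\alpha$ and $\mu(\alpha/d)=\mu(\alpha)$, contradicting coprimality. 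Thus $\End_Q(M)=\B{F}_{q^r}$, i.e. $\Aut_Q(M)=\B{F}_{q^r}^\times$ and $a_M=q^r-1$, so every $\GL_\alpha$-orbit of a stable point carries exactly $|\GL_\alpha(\B{F}_{q^r})|/(q^r-1)$ rational points.

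Next I would identify the number of $\B{F}_{q^r}$-isomorphism classes of stable representations with $|\Mod_\alpha^\mu(Q)(\B{F}_{q^r})|$. By Lang's theorem $H^1(\B{F}_{q^r},G)=0$ for any connected group $G$: applied to the connected stabilizer $\B{G}_m$ it shows the rational points of each orbit form a single $\GL_\alpha(\B{F}_{q^r})$-orbit, and applied to the connected group $\operatorname{PGL}_\alpha$ it shows every rational point of the geometric quotient lifts to a rational representation. Summing over isomorphism classes and dividing by $|\GL_\alpha(\B{F}_{q^r})|$, together with Corollary \ref{C:int1} applied over $\B{F}_{q^r}$, this yields
\[ (q^r-1)\,r_\alpha^{\rm ss}(q^r)=|\Mod_\alpha^\mu(Q)(\B{F}_{q^r})| \qquad\text{for every } r\geq 1. \]

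Set $R(t):=(t-1)\,r_\alpha^{\rm ss}(t)$, which by Corollary \ref{C:int1} is a priori only a rational function. Since $R(q^r)$ is a nonnegative integer for all $r$, I would show $R$ is a polynomial by an elementary estimate: write $R=P+A/B$ with $P\in\B{Q}[t]$ and $\deg A<\deg B$ in lowest terms, and let $D_0$ be a common denominator of the coefficients of $P$, so that $P(q^r)\in\frac{1}{D_0}\B{Z}$. Then $A(q^r)/B(q^r)=R(q^r)-P(q^r)\in\frac{1}{D_0}\B{Z}$, while $A(q^r)/B(q^r)\to 0$; hence this quantity vanishes for large $r$, forcing $A\equiv 0$ and $R\in\B{Q}[t]$. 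Thus $\Mod_\alpha^\mu(Q)$ is polynomial-count with counting polynomial $R$, and Lemma \ref{L:Euler} upgrades this to $R\in\B{Z}[t]$ with $R(1)$ the Euler characteristic; this settles the first two assertions.

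For the acyclic case, projectivity of $\Mod_\alpha^\mu(Q)$ is already recorded (the invariant ring is $k$), while smoothness follows because in the coprime regime $\operatorname{PGL}_\alpha$ acts freely on the smooth variety $\Rep_\alpha^{\rm st}(Q)$, so the geometric quotient is smooth. A smooth proper variety is $l$-pure, so Lemma \ref{L:polycount} applies and gives $R(q)=P(\Mod_\alpha^\mu(Q),q)\in\B{N}[t]$, i.e. the counting polynomial has nonnegative coefficients. \textbf{Main obstacle.} The delicate step is the orbit count of the second paragraph: one must check both that coprimality forces stable objects to be geometrically stable with only scalar automorphisms, and that passing to $\B{F}_{q^r}$-points of the quotient creates no discrepancy (the Lang-theorem inputs for $\B{G}_m$ and $\operatorname{PGL}_\alpha$). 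Once the displayed identity is in hand, everything else is a formal application of the two purity lemmas.
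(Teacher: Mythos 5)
Your proposal is correct and follows exactly the route the paper intends (and that Reineke's cited Theorem 6.7 uses): coprimality forces stable $=$ absolutely stable with scalar endomorphisms, Lang's theorem converts the orbit count into $(q-1)r_\alpha^{\rm ss}(q^r)=|\Mod_\alpha^{\mu}(Q)(\B{F}_{q^r})|$, and then Lemmas \ref{L:Euler} and \ref{L:polycount} give integrality and, in the acyclic case, positivity via smoothness and properness. The paper itself only gives the one-line deduction from those two lemmas, so your filled-in details (the $\End_Q(M)=\B{F}_{q^r}$ argument via divisibility of $\alpha$ and the elementary step showing a rational function with integer values at all $q^r$ is a polynomial) are exactly the missing content, correctly supplied.
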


Without the coprime assumption, it is proven in \cite{R2} that $\Mod_\alpha^{\mu}(Q)$ still has polynomial counting property. This property also holds for absolutely stable representations of fixed dimension vectors \cite{MR}. We will continue this discussion in the last section.

\section{Application to Grassmannians of Quiver Representations} \label{S:grass}

\begin{lemma} \label{L:grouplike} {\em \cite[Lemma 1.7]{S}} $\Delta(\chi_\alpha)=\sum_{\beta+\gamma=\alpha}\innerprod{\beta,\gamma}^{-1}\chi_\beta\otimes\chi_\gamma.$
So $$\int_\Delta\chi_\alpha=\sum_{\beta+\gamma=\alpha}r_{\beta,\gamma}(q)x^\beta y^\gamma,$$
where $r_{\beta,\gamma}(q)$ is the rational function $\innerprod{\beta,\gamma}^{-1}r_\beta(q)\otimes r_\gamma(q)$.
\end{lemma}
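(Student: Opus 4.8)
The plan is to expand $\Delta(\chi_\alpha)$ directly from the definition of the comultiplication and then regroup the terms according to the dimension vectors of the two tensor factors. Writing $\Delta(\chi_\alpha)=\sum_{\br{W}=\alpha}\Delta([W])$ and inserting the defining formula $\Delta([W])=\sum_{[U][V]}F_{UV}^W\frac{a_Ua_V}{a_W}[U]\otimes[V]$, I would interchange the order of summation so that the outer sum runs over pairs of dimension vectors with $\beta+\gamma=\alpha$ (forced by $\br{U}=\beta$, $\br{V}=\gamma$, $\br{W}=\alpha$), the middle sum over isomorphism classes $[U]$ with $\br{U}=\beta$ and $[V]$ with $\br{V}=\gamma$, and the inner sum over $[W]$ with $\br{W}=\alpha$.

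The heart of the argument is the inner sum $\sum_{\br{W}=\alpha}F_{UV}^W\frac{a_Ua_V}{a_W}$ for fixed $U,V$. Using the Riedtmann formula $F_{UV}^W=\frac{|\Ext_Q(U,V)_W|}{|\Hom_Q(U,V)|}\frac{a_W}{a_Ua_V}$, each summand collapses to $\frac{|\Ext_Q(U,V)_W|}{|\Hom_Q(U,V)|}$. Since every extension class of $V$ by $U$ has a well-defined middle term of dimension $\alpha=\beta+\gamma$, the subspaces $\Ext_Q(U,V)_W$ partition $\Ext_Q(U,V)$ as $W$ ranges over classes of dimension $\alpha$; hence the inner sum equals $\frac{|\Ext_Q(U,V)|}{|\Hom_Q(U,V)|}$. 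Because $kQ$ is hereditary, the Euler form reads $\innerprod{\br{U},\br{V}}_a=\dim\Hom_Q(U,V)-\dim\Ext_Q(U,V)$, so this ratio is exactly $q^{-\innerprod{\beta,\gamma}_a}=\innerprod{\beta,\gamma}^{-1}$. The essential point, and the only place where anything beyond bookkeeping is used, is that this value depends solely on the dimension vectors $\beta,\gamma$ and not on the individual modules $U,V$.

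With the inner sum now a constant $\innerprod{\beta,\gamma}^{-1}$ independent of $U,V$, it factors out of the middle sum, which then splits as $\big(\sum_{\br{U}=\beta}[U]\big)\otimes\big(\sum_{\br{V}=\gamma}[V]\big)=\chi_\beta\otimes\chi_\gamma$, yielding $\Delta(\chi_\alpha)=\sum_{\beta+\gamma=\alpha}\innerprod{\beta,\gamma}^{-1}\chi_\beta\otimes\chi_\gamma$. For the second assertion I would simply apply $\int\otimes\int$ to this identity, using the multiplicativity and bilinearity already established, together with $\int\chi_\beta=r_\beta(q)x^\beta$ and $\int\chi_\gamma=r_\gamma(q)y^\gamma$ in the two factors of $\D{P}_{\innerprod{Q^2}}$; collecting the scalar $\innerprod{\beta,\gamma}^{-1}r_\beta(q)r_\gamma(q)=r_{\beta,\gamma}(q)$ gives the stated formula. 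I do not anticipate a genuine obstacle here, since the proof is a short computation; the only subtleties to verify carefully are the partition argument for $\Ext_Q(U,V)$ by middle term and the correct hereditary sign convention in the Euler form, which is what guarantees that the $\Ext/\Hom$ ratio becomes $\innerprod{\beta,\gamma}^{-1}$ rather than its inverse.
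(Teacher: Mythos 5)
Your proof is correct and is the standard argument; the paper itself gives no proof of this lemma (it is quoted from \cite[Lemma 1.7]{S}), and your computation --- expanding $\Delta$ on each $[W]$, using Riedtmann's formula to rewrite the coefficient as $|\Ext_Q(U,V)_W|/|\Hom_Q(U,V)|$, summing over middle terms to get $|\Ext_Q(U,V)|/|\Hom_Q(U,V)|=\innerprod{\beta,\gamma}^{-1}$ by heredity, and then applying $\int\otimes\int$ --- is exactly what that reference does. The only cosmetic caveat is that the sets $\Ext_Q(U,V)_W$ are subsets (not linear subspaces) of $\Ext_Q(U,V)$, but the partition argument you use only needs them to be a partition, so nothing is affected.
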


Apply the Hall character $\int_\Delta$ to the identity \eqref{eq:HallID}, then we obtain the formula:
\begin{corollary} \label{C:QG}
$$\sum_{M\in\module_\alpha^{\rm ss}(Q)}\frac{|\Gr_\gamma(M)|}{a_M}=\sum_* \sum_{*'}(-1)^{s-1}\prod_{1\leqslant i<j\leqslant s}\frac{\innerprod{\gamma_i,\beta_j}}{\innerprod{\alpha_i,\alpha_j}}\prod_{k=1}^s r_{\beta_k,\gamma_k}(q),$$
where the first summation $*$ is the same as the one in \eqref{eq:HallID}, and the second summation $*'$ runs over all decomposition $\gamma_1+\cdots+\gamma_s=\gamma$ of $\gamma$ into dimension vectors such that for each $i$ $0\leqslant\gamma_i\leqslant\alpha_i=\beta_i+\gamma_i$.
\end{corollary}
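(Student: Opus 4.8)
The plan is to apply the Hall character $\int_\Delta$ from Proposition \ref{P:oint} to the identity \eqref{eq:HallID} and then match coefficients. The left-hand side of \eqref{eq:HallID} is $\chi_\alpha^{\rm ss}$, whose image under $\int_\Delta$ I can read off from the concrete formula $\int_\Delta([W])={a_W}^{-1}\sum_\gamma|\Gr_\gamma(W)|x^{\alpha-\gamma}y^{\gamma}$. Summing over $[W]\in\module_\alpha^{\rm ss}(Q)$ and extracting the coefficient of $x^{\alpha-\gamma}y^\gamma$ produces exactly $\sum_{M\in\module_\alpha^{\rm ss}(Q)}\frac{|\Gr_\gamma(M)|}{a_M}$, which is the left-hand side of the corollary. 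So the real content is computing the image of the right-hand side of \eqref{eq:HallID}.

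The right-hand side of \eqref{eq:HallID} is a signed sum of products $\chi_{\alpha_1}\cdots\chi_{\alpha_s}$ over the decompositions indexed by $*$. Since $\int_\Delta$ is an algebra morphism (this is the assertion of Proposition \ref{P:oint}, and the reason I must use $\int_\Delta$ rather than $\int\otimes\int$ naively), I have $\int_\Delta(\chi_{\alpha_1}\cdots\chi_{\alpha_s})=\int_\Delta(\chi_{\alpha_1})\cdots\int_\Delta(\chi_{\alpha_s})$, where the product on the right is taken in the double quantum polynomial algebra $\D{P}_{\innerprod{Q^2}}$. Each factor is given by Lemma \ref{L:grouplike}: $\int_\Delta\chi_{\alpha_k}=\sum_{\beta_k+\gamma_k=\alpha_k}r_{\beta_k,\gamma_k}(q)\,x^{\beta_k}y^{\gamma_k}$. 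Thus I would expand the $s$-fold product of these sums; each term is indexed by a choice of decomposition $\alpha_k=\beta_k+\gamma_k$ for every $k$, which is precisely the data of the second summation $*'$ (with $\gamma=\sum_k\gamma_k$, $\beta=\sum_k\beta_k$).

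The key computational step is to carry out the multiplication in $\D{P}_{\innerprod{Q^2}}$. I would compute the product $\prod_{k=1}^s x^{\beta_k}y^{\gamma_k}$ using the rule $(x^{\beta_1}y^{\gamma_1})(x^{\beta_2}y^{\gamma_2})=\frac{\innerprod{\gamma_1,\beta_2}}{\innerprod{\alpha_1,\alpha_2}}x^{\alpha_1}y^{\alpha_2}$ iteratively. Because the twist factor $\innerprod{\gamma_i,\beta_j}$ is collected each time a $y$-block on the left passes a $x$-block on the right, the accumulated monomial coefficient should telescope to $\prod_{1\leqslant i<j\leqslant s}\frac{\innerprod{\gamma_i,\beta_j}}{\innerprod{\alpha_i,\alpha_j}}$, with the total monomial being $x^{\sum\beta_k}y^{\sum\gamma_k}$. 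This is the step I expect to be the main obstacle: verifying that the quasi-commutation factors of the quantum multiplication assemble into exactly the stated product over pairs $i<j$ requires bookkeeping with the bilinear form, and I would check it cleanly by induction on $s$, using bimultiplicativity of $\innerprod{-,-}$ in each argument to reduce the $s$-fold case to the two-fold rule.

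Finally, I would assemble the pieces: multiplying the coefficient $\prod_{1\leqslant i<j\leqslant s}\frac{\innerprod{\gamma_i,\beta_j}}{\innerprod{\alpha_i,\alpha_j}}$ from the quantum multiplication with the product $\prod_{k=1}^s r_{\beta_k,\gamma_k}(q)$ coming from the coefficients in Lemma \ref{L:grouplike}, summing over $*$ and $*'$, and then comparing the coefficient of $x^{\alpha-\gamma}y^{\gamma}$ on both sides of the identity $\int_\Delta(\chi_\alpha^{\rm ss})=\sum_*(-1)^{s-1}\int_\Delta(\chi_{\alpha_1}\cdots\chi_{\alpha_s})$. Matching these coefficients yields the stated formula; the sign $(-1)^{s-1}$ is carried along untouched from \eqref{eq:HallID}, and the constraint $0\leqslant\gamma_i\leqslant\alpha_i$ in $*'$ simply records that each $\gamma_i$ must be a genuine sub-dimension-vector arising from a Grassmannian term in Lemma \ref{L:grouplike}.
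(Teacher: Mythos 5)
Your proposal is correct and is exactly the paper's argument: the paper proves Corollary \ref{C:QG} by applying the character $\int_\Delta$ of Proposition \ref{P:oint} to the identity \eqref{eq:HallID}, using Lemma \ref{L:grouplike} for each factor $\int_\Delta\chi_{\alpha_k}$ and expanding the $s$-fold product in $\D{P}_{\innerprod{Q^2}}$, with the quasi-commutation factors assembling into $\prod_{i<j}\innerprod{\gamma_i,\beta_j}/\innerprod{\alpha_i,\alpha_j}$ just as you describe (note the convention $\alpha_i=\beta_i+\gamma_i$ throughout). You have simply written out the coefficient-matching that the paper leaves implicit.
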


We denote the above rational function by $r_{\beta,\gamma}^{\rm ss}(q)$. It follows from Lemma \ref{L:grepmoduli} and Lemma \ref{L:Euler},\ref{L:polycount} that

\begin{theorem} \label{T:QG}
Assume that $\alpha$ is coprime to $\mu$, then $(q-1)r_{\beta,\gamma}^{\rm ss}(q)$ is actually an integral polynomial, which counts the number $\sum_{M\in\Mod_\alpha^{\mu}(Q)}|\Gr_\gamma(M)|$. If $Q$ has no oriented cycles, then this polynomial has non-negative coefficients.
\end{theorem}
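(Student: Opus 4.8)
The plan is to mimic the derivation of Corollary \ref{C:coprime1}, replacing the bare moduli space by the incidence variety $X:=\Mod_{(\gamma,\alpha)}^{\hat\mu}(A_2(Q))$ of Lemma \ref{L:grepmoduli}, whose $\B{F}_q$-points are exactly the pairs $(M,L)$ with $M$ $\sigma$-stable of dimension $\alpha$ and $L\subset M$ a subrepresentation of dimension $\gamma$. First I would record the point-count identification. Because $\alpha$ is coprime to $\mu$, there are no strictly semistable representations of dimension $\alpha$, so the sum defining the left-hand side of Corollary \ref{C:QG} runs only over stable $M$. Coprimality moreover forbids any proper subdimension vector of the same slope, so no $\B{F}_q$-rational stable $M$ can decompose over $\br{\B{F}}_q$ into Galois-conjugate pieces; hence $\End_Q(M)=\B{F}_q$ and $a_M=q-1$ for every such $M$. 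Combining this with Lemma \ref{L:grepmoduli}, which says the projection $X\to\Mod_\alpha^\sigma(Q)$ is surjective with fibre $\Gr_\gamma(M)$ over $[M]$, and with the (coprime) bijection between $\Mod_\alpha^\mu(Q)(\B{F}_q)$ and isomorphism classes of $\B{F}_q$-rational stable modules (Lang's theorem makes every Frobenius-fixed orbit rational, since the stabilizer in $\operatorname{PGL}_\alpha$ is trivial), I would obtain
$$(q-1)\,r_{\beta,\gamma}^{\rm ss}(q)=\sum_{[M]\in\Mod_\alpha^\mu(Q)(\B{F}_q)}|\Gr_\gamma(M)(\B{F}_q)|=|X(\B{F}_q)|.$$

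Next I would upgrade this numerical identity to the polynomiality claim. The formula of Corollary \ref{C:QG} is a single rational function in the variable $q$, assembled from the $r_{\beta_k,\gamma_k}(q)$ and the multiplicative Euler forms, and the Hall-algebra identity \eqref{eq:HallID} from which it is deduced holds verbatim over every finite field; hence the displayed equality persists with $q$ replaced by any prime power $q^r$. Thus the point counts of $X$ over all $\B{F}_{q^r}$ are the values of the fixed function $(t-1)r_{\beta,\gamma}^{\rm ss}(t)$, so $X$ is polynomial-count with this counting polynomial, and Lemma \ref{L:Euler} forces the polynomial to lie in $\B{Z}[t]$ and to specialize at $t=1$ to the $l$-adic Euler characteristic of $X$. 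This already settles the first assertion.

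Finally, for the positivity statement I would invoke purity. When $Q$ has no oriented cycles the commuting quiver $\hat Q$ is again acyclic, so $A_2(Q)$ has no oriented cycles and the GIT quotient $X$ is projective; by Lemma \ref{L:grepmoduli} it is also smooth. A smooth proper variety is $l$-pure, so Lemma \ref{L:polycount} applies and gives $(t-1)r_{\beta,\gamma}^{\rm ss}(t)\in\B{N}[t]$, as desired.

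I expect the genuinely delicate step to be the first one: the passage from the groupoid (mass) count $\sum_{M}|\Gr_\gamma(M)|/a_M$ to the honest count $|X(\B{F}_q)|$ of rational points, i.e.\ the justification that every orbit is weighted by exactly $q-1$ and that $\Mod_\alpha^\mu(Q)(\B{F}_q)$ really parameterizes the $\B{F}_q$-rational stable modules with no loss in the Galois descent. This is where the coprimality hypothesis is indispensable: it simultaneously kills strictly semistable objects, pins the endomorphism ring to the ground field (so that $a_M=q-1$), and guarantees via Lang's theorem, applied to the free $\operatorname{PGL}_\alpha$-action on the stable locus, that no rational point of the moduli disappears. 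Once this identification and Lemma \ref{L:grepmoduli} are in hand, the remaining arguments are formal, exactly as in the moduli case of Corollary \ref{C:coprime1}; the only other point to watch is that the rational function of Corollary \ref{C:QG} is honestly a polynomial, which is precisely the polynomial-count property secured above.
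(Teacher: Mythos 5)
Your proposal is correct and follows exactly the route the paper intends: the paper gives no written proof beyond the sentence ``It follows from Lemma \ref{L:grepmoduli} and Lemma \ref{L:Euler},\ref{L:polycount}'', and your argument is precisely that chain --- identify $(q-1)r_{\beta,\gamma}^{\rm ss}(q)$ with the point count of the smooth projective incidence moduli $\Mod_{(\gamma,\alpha)}^{\hat\mu}(A_2(Q))$ via coprimality (so $a_M=q-1$) and Lang's theorem, then apply polynomial-count plus purity. The details you supply (the groupoid-to-orbit count, the validity of Corollary \ref{C:QG} over every $\B{F}_{q^r}$, acyclicity of $\hat Q$ giving projectivity) are exactly what the paper leaves implicit.
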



Using transfer matrix method as in \cite[Corollary 5.5]{R1}, we provide the following polynomial-time algorithm to compute $r_{\beta,\gamma}^{\rm ss}(q)$. The verification is almost the same as in \cite{R1}.

\begin{corollary} Let $\D{M}$ be the matrix with rows and columns indexed by the set of pairs $\{(\dot\alpha,\dot\gamma)\mid \mu(\dot\alpha)<\mu(\alpha), \dot\gamma<\min(\dot\alpha,\gamma)\} \cup \{(0,0),(\alpha,\gamma)\}\}$, where $\min$ is taken coordinatewise. The $((\alpha_i,\gamma_i),(\alpha_j,\gamma_j))$-th entry of $\D{M}$ is given by:
$\frac{\innerprod{\gamma_i,\alpha_k-\gamma_k}}{\innerprod{\alpha_i,\alpha_k}}r_{\alpha_k-\gamma_k,\gamma_k}(q)$
if $\alpha_k:=\alpha_j-\alpha_i\geqslant 0$ and $\alpha_k\geqslant\gamma_k:=\gamma_j-\gamma_i,$
and zero otherwise. Then $r_{\beta,\gamma}^{\rm ss}(q)$ is the absolute value of the cofactor of the $((0,0),(\alpha,\gamma))$-th entry.
\end{corollary}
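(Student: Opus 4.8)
The plan is to recognize the double sum of Corollary \ref{C:QG} as a weighted sum over lattice paths and to package it via the transfer-matrix formalism. A pair of compatible decompositions indexed by $*$ and $*'$ is nothing but a chain of partial sums
$$(0,0)=(\dot\alpha_0,\dot\gamma_0),\ (\dot\alpha_1,\dot\gamma_1),\ \dots,\ (\dot\alpha_s,\dot\gamma_s)=(\alpha,\gamma),$$
where $\dot\alpha_k=\sum_{l\le k}\alpha_l$ and $\dot\gamma_k=\sum_{l\le k}\gamma_l$, so that the pieces are recovered as $\alpha_k=\dot\alpha_k-\dot\alpha_{k-1}$, $\gamma_k=\dot\gamma_k-\dot\gamma_{k-1}$, $\beta_k=\alpha_k-\gamma_k$. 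First I would check that the intermediate nodes of such a chain are exactly the elements of the stated index set: the condition $\mu(\dot\alpha_k)<\mu(\alpha)$ for $k<s$ is the defining constraint of $*$, while the relations $0\le\gamma_k\le\alpha_k$ from $*'$ translate into monotonicity of $\dot\gamma$ together with the bound $\dot\gamma_k\le\min(\dot\alpha_k,\gamma)$, and $(0,0)$, $(\alpha,\gamma)$ are adjoined as the endpoints.

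The key algebraic step is to make the global cross-term multiplicative along the chain. Using bilinearity of the Euler form together with $\sum_{i<j}\gamma_i=\dot\gamma_{j-1}$ and $\sum_{i<j}\alpha_i=\dot\alpha_{j-1}$, I obtain
$$\prod_{1\le i<j\le s}\frac{\innerprod{\gamma_i,\beta_j}}{\innerprod{\alpha_i,\alpha_j}}=\prod_{j=1}^s\frac{\innerprod{\dot\gamma_{j-1},\beta_j}}{\innerprod{\dot\alpha_{j-1},\alpha_j}}.$$
Hence the summand of Corollary \ref{C:QG} factors as $(-1)^{s-1}\prod_{j=1}^s\frac{\innerprod{\dot\gamma_{j-1},\beta_j}}{\innerprod{\dot\alpha_{j-1},\alpha_j}}r_{\beta_j,\gamma_j}(q)$, and each factor is precisely the $\big((\dot\alpha_{j-1},\dot\gamma_{j-1}),(\dot\alpha_j,\dot\gamma_j)\big)$-entry of $\D{M}$: substituting $\alpha_i\mapsto\dot\alpha_{j-1}$, $\gamma_i\mapsto\dot\gamma_{j-1}$ gives $\alpha_k=\alpha_j$, $\gamma_k=\gamma_j$, $\alpha_k-\gamma_k=\beta_j$. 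Therefore $r_{\beta,\gamma}^{\rm ss}(q)=\sum_{s\ge1}(-1)^{s-1}(\D{N}^s)_{(0,0),(\alpha,\gamma)}$, where $\D{N}=\D{M}-I$ records the off-diagonal (genuine) edges; note the diagonal of $\D{M}$ consists of $1$'s, since there $\alpha_k=\gamma_k=0$ and $r_{0,0}(q)=1$.

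What remains is pure linear algebra. A genuine edge strictly increases $\dot\alpha$, so $\D{N}$ is strictly upper triangular in any linear refinement of the coordinatewise dominance order on the finite index set, hence nilpotent, and $\D{M}=I+\D{N}$ is unipotent with $\det\D{M}=1$. Consequently $\sum_{s\ge1}(-1)^{s-1}\D{N}^s=I-(I+\D{N})^{-1}$, and evaluating at the off-diagonal position yields $r_{\beta,\gamma}^{\rm ss}(q)=-(\D{M}^{-1})_{(0,0),(\alpha,\gamma)}$. Finally $\D{M}^{-1}=\operatorname{adj}(\D{M})$ because $\det\D{M}=1$, so this entry equals, up to a sign and a transposition of the deleted row/column indices, the cofactor at the $((0,0),(\alpha,\gamma))$-position; taking absolute values absorbs that sign together with the leading minus, giving the assertion.

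The step I expect to be the main obstacle is the first, purely combinatorial one: pinning down the index set so that the chains through it are in exact bijection with the terms of $*,*'$. The slope condition transfers verbatim from Reineke's moduli case, but the extra $\gamma$-coordinate forces care at the boundary — one must confirm that the bounds on the intermediate nodes, together with the explicit adjunction of $(0,0)$ and $(\alpha,\gamma)$, neither omit an admissible decomposition nor create spurious chains, and that $(0,0)$ and $(\alpha,\gamma)$ really are the unique source and sink. The sign and transpose bookkeeping in the adjugate identity is the only other delicate point, which is exactly why the statement is phrased with an absolute value; everything else runs parallel to \cite[Corollary 5.5]{R1}.
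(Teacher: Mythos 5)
Your argument is correct and is exactly the transfer-matrix method the paper intends: the paper gives no proof of this corollary beyond citing \cite[Corollary 5.5]{R1} and asserting that ``the verification is almost the same,'' and your write-up (chains of partial sums indexed by the stated set, telescoping the cross-term via bilinearity of the Euler form, unipotence of $\D{M}=I+\D{N}$, and $r_{\beta,\gamma}^{\rm ss}=-(\D{M}^{-1})_{(0,0),(\alpha,\gamma)}$) is precisely that verification. The one subtlety you flag is real but is a defect of the statement's phrasing rather than of your proof: the path sum equals $-C_{(\alpha,\gamma),(0,0)}$, so ``the cofactor of the $((0,0),(\alpha,\gamma))$-th entry'' must be read with the usual adjugate transposition of the deleted row and column, the absolute value absorbing only the sign.
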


We use this algorithm to compute an interesting example in \cite{DWZ2}.
\begin{example} Consider the four-arrow Kronecker quiver:
$$\vcenter{\xymatrix{
1 \ar@<1.5ex>[r]|{} \ar@<0.5ex>[r]|{} \ar@<-0.5ex>[r]|{} \ar@<-1.5ex>[r]|{}& 2	 }}$$

We take $\alpha=(3,4),\gamma=(1,3)$, and $\sigma=(4,-3)$, then the moduli $\Mod_\alpha^\sigma(Q)$ is a smooth geometric quotient. So we are in the situation of Lemma \ref{L:grepmoduli}: $\pi:\Mod_{(\gamma,\alpha)}^{\hat{\mu}}(A_2(Q))\twoheadrightarrow\Mod_\alpha^{\sigma}(Q)$. It is known that the general fibre of $\pi$ is a genus-$3$ curve.
$\Mod_{(\gamma,\alpha)}^{\hat{\mu}}(A_2(Q))$ has the counting polynomial:
\begin{align*}& (q + 1)(q^2 + 1)^2((q^{20}+1) + 2(q^{19}+q) + 4(q^{18}+q^2) + 7(q^{17}+q^3) + 13(q^{16}+q^4) \\
 & + 20(q^{15}+q^5) + 30(q^{14}+q^6) + 41(q^{13}+q^7) + 54(q^{12}+q^8) + 64(q^{11}+q^9) + 69q^{10}).\end{align*}
$\Mod_\alpha^{\sigma}(Q)$ has the counting polynomial:
\begin{align*}& (q^{24}+1) + (q^{23}+q) + 3(q^{22}+q^2) + 5(q^{21}+q^3) + 9(q^{20}+q^4) + 13(q^{19}+q^5) + 22(q^{18}+q^6) \\
&+29(q^{17}+q^7) + 42(q^{16}+q^8) + 52(q^{15}+q^9) + 65(q^{14}+q^{10}) + 71(q^{13}+q^{11}) + 77q^{12}.\end{align*}
\end{example}

\section{Application to Quantum Cluster Algebras} \label{S:cluster}

In this section, we assume that the quiver $Q$ has no oriented cycles. Let $E$ be the Euler matrix of $Q$, then the {\em $B$-matrix} is $B=E-E^T$. For any dimension vector $\alpha$ and $\gamma$, we set $g(\alpha)=-\alpha E^T, \phi(\gamma)=\gamma B$, and $(\cdot,\cdot)$ be the usual multiplicative antisymmetric form corresponding to $B$.

Let $\tilde{Q}$ be an extended quiver of $Q$ by a set of {\em frozen vertices} $C=(n+1,n+2,\dots,m)$, so there is no arrow from the vertices of $Q$ to $C$. The extended Euler matrix $\tilde{E}$ is by definition the left $m\times n$ submatrix of the Euler matrix of $\tilde{Q}$. We also extend the $B$-matrix to an $n\times m$ matrix $\tilde{B}$, whose $(i,j)$-th entry is
$$b_{i,j}=|\text{arrows } j\to i|-|\text{arrows } i\to j|.$$
Assume that there is an $m\times m$ antisymmetric matrix $\Lambda$ {\em unitally compatible} with $\tilde{B}$, that is, $-\tilde{B}\Lambda=(I_n,0)$. In particular, $\tilde{B}$ is of full rank. We set $\tilde{g}(\alpha)=-\alpha\tilde{E}^T$ and $\tilde{\phi}(\gamma)=\gamma\tilde{B}$, then it is easy to check that
\begin{equation} \label{eq:linear} \lambda(\tilde{g}(\alpha),\tilde{\phi}(\gamma))=\innerprod{\gamma,\alpha}, \text{  and  }  \lambda(\tilde{\phi}(\alpha),\tilde{\phi}(\gamma))=(\gamma,\alpha).\end{equation}


Let $\lambda(\cdot,\cdot)$ be the multiplicative antisymmetric form corresponding to $\Lambda$. We now change to the quantum Laurent polynomial algebra $\D{P}_\lambda$, where the multiplication rule is given by $x^\alpha x^\beta=\lambda(\alpha,\beta)^{-\frac{1}{2}}x^{\alpha+\beta}$ for any $\alpha,\beta\in\B{Z}^m$.
Following \cite{Q} (see also \cite{Ru}), we define
\begin{definition} \label{D:CV} For any indecomposable rigid object $T\in\module_\alpha(Q)$, the {\em quantum cluster variable} associated to $T$ is
$$X(T)=\sum_\gamma\innerprod{\gamma,\alpha-\gamma}^{-\frac{1}{2}}|\Gr_\gamma(T)|x^{\tilde{g}(\alpha)-\tilde{\phi}(\gamma)}.$$
The {\em quantum cluster algebra} {\em with coefficients} $\{x_v\}_{v\in\tilde{Q}\setminus Q}$ is the subalgebra of $\D{P}_\lambda$ generated by all quantum cluster variables along with $\{x_v\}_{v\in Q_0}$.
\end{definition}

One interesting problem in the cluster theory is about {\em positivity}.

\begin{corollary} \label{C:pos} For any indecomposable rigid $T$, $|\Gr_\gamma(T)|$ is counted by its Poincare polynomial, which has non-negative coefficients.
\end{corollary}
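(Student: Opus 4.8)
The plan is to obtain the statement as a direct consequence of Theorem \ref{T:QG}, by choosing a stability for which the moduli space of $T$ is a single point. I first record what is actually needed. The coefficient of the relevant monomial in the quantum cluster variable $X(T)$ is $\innerprod{\gamma,\alpha-\gamma}^{-1/2}|\Gr_\gamma(T)|$, and $\innerprod{\gamma,\alpha-\gamma}^{-1/2}$ is a positive monomial in $q^{1/2}$; so the positivity of $X(T)$ is equivalent to the claim that, viewed as a function of $q$, each $|\Gr_\gamma(T)|$ lies in $\B{N}[q]$ and equals the counting polynomial of the variety $\Gr_\gamma(T)$. Since $T$ is rigid, its orbit is dense in $\Rep_\alpha(Q)$ and $\Gr_\gamma(T)$ is a general fibre of $r$, hence smooth by the Bertini argument of Section \ref{S:basic}; being closed in the projective variety $\Gr\binom{\alpha}{\gamma}$ it is also projective, so it is $l$-pure. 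By Lemmas \ref{L:Euler} and \ref{L:polycount} it therefore suffices to prove that $\Gr_\gamma(T)$ is polynomial-count, for then its counting polynomial agrees with its Poincaré polynomial and lies in $\B{N}[t]$.

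The device that makes $\Gr_\gamma(T)$ polynomial-count is to realize it as a full moduli space. Since $T$ is indecomposable and $kQ$ is hereditary, $T$ is exceptional: $\End_Q(T)=k$ and $\innerprod{\alpha,\alpha}_a=\dim\End_Q(T)-\dim\Ext_Q(T,T)=1$, so $\alpha$ is a real Schur root. I would choose a weight $\sigma$ with $\sigma(\alpha)=0$ for which $T$ is $\sigma$-stable, perturbed to be generic so that $\mu=\sigma/\theta$ is coprime to $\alpha$; this is possible because $\sigma$-stability of $T$ is an open condition on $\sigma$ and the stable locus is nonempty. Then $\Rep_\alpha\ss{\sigma}{ss}(Q)$ is a nonempty open, hence irreducible, subset of the affine space $\Rep_\alpha(Q)$ consisting (by coprimality) exclusively of stable points, so the GIT quotient $\Mod_\alpha^\sigma(Q)$ is irreducible; at the rigid point $[T]$ its dimension is $\dim\Ext_Q(T,T)=0$, so $\Mod_\alpha^\sigma(Q)$ is the single reduced point $[T]$, and $T$ is up to isomorphism the only $\sigma$-stable representation of dimension $\alpha$.

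Now I would feed this into the machinery of Section \ref{S:grass}. Because $Q$ is acyclic and $\mu$ is coprime to $\alpha$, Theorem \ref{T:QG} shows that $(q-1)r_{\beta,\gamma}^{\rm ss}(q)$ is an integral polynomial with non-negative coefficients counting $\sum_{M\in\Mod_\alpha^\sigma(Q)}|\Gr_\gamma(M)|$, which by the previous paragraph is exactly $|\Gr_\gamma(T)|$. Furthermore, by Lemma \ref{L:grepmoduli} the auxiliary space $\Mod_{(\gamma,\alpha)}^{\hat{\mu}}(A_2(Q))$ maps onto $\Mod_\alpha^\sigma(Q)=\{[T]\}$ with fibre $\Gr_\gamma(T)$, hence is isomorphic to $\Gr_\gamma(T)$ and is smooth and projective; this supplies the $l$-pure and polynomial-count input of Lemma \ref{L:polycount} and identifies the above polynomial with the Poincaré polynomial of $\Gr_\gamma(T)$. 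Thus $|\Gr_\gamma(T)|$ is counted by the Poincaré polynomial of $\Gr_\gamma(T)$, which has non-negative coefficients, as claimed.

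The hard part is the one nonformal ingredient in the middle paragraph: that an exceptional module is $\sigma$-stable for some weight $\sigma$ with $\sigma(\alpha)=0$. Openness of the stable locus, the perturbation to a coprime $\mu$, and the dimension count that collapses the moduli to a point are all routine once stability is known, and this is exactly where the acyclic/hereditary hypothesis is genuinely used. I would supply this existence from Schofield's theory of general representations (a real Schur root is a Schur root, whose general representation is stable for the canonical stability), or cite it as part of the exceptional-sequence formalism already underlying \cite{CR} and \cite{Q}. I note in passing that the more naive approach of counting over the open orbit $\D{O}_T\subset\Rep_\alpha(Q)$ does not by itself isolate $|\Gr_\gamma(T)|$ from the contributions of the other modules of dimension $\alpha$, which is why passing to a stability with one-point moduli is the decisive step.
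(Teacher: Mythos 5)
Your proposal is correct and follows essentially the same route as the paper: use that $\alpha$ is a real Schur root, invoke Schofield to choose a stability for which $T$ is stable, observe that $\innerprod{\alpha,\alpha}_a=1$ forces the moduli space to be the single point $[T]$, and then read off the claim from Theorem \ref{T:QG} (via Lemma \ref{L:grepmoduli}). You supply more detail than the paper on the coprimality perturbation and the $l$-purity/polynomial-count bookkeeping, but the argument is the same.
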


\begin{proof} If $T$ is indecomposable rigid, then the dimension vector $\alpha$ of $T$ corresponds to a {\em real Schur root}. By a result of Schofield \cite[Theorem 6.1]{So} we can always choose a slope function $\mu$ such that the general representation $T$ is $\mu$-stable. Since $\innerprod{\alpha,\alpha}_a=1$, the moduli $\Mod_\alpha^\mu(Q)$ is zero-dimensional. Then $T$ must be the only polystable representation, otherwise the moduli is reducible. Now everything follows from Theorem \ref{T:QG}.
\end{proof}

\begin{proposition}{\em [Cluster Multiplication]} \label{P:CM}
\begin{equation}\label{eq:multi} X(U)X(V)=\lambda(\tilde{g}(\alpha_1),\tilde{g}(\alpha_2))^{\frac{1}{2}}\sum_{[W]}\frac{|\Ext_Q(U,V)_W|}{|\Ext_Q(U,V)|}X(W).\end{equation}
\end{proposition}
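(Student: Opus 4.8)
The plan is to prove the cluster multiplication formula \eqref{eq:multi} by reducing it, via the Hall character $\int_\Delta$, to the concrete identity already established in Proposition \ref{P:oint}. The quantum cluster variable $X(T)$ in Definition \ref{D:CV} is essentially $\int_\Delta[T]$ composed with the linear substitution $x^\beta y^\gamma \mapsto x^{\tilde g(\alpha)-\tilde\phi(\gamma)}$ (with $\alpha=\beta+\gamma$), rescaled by a power of $\innerprod{\gamma,\alpha-\gamma}$. So the first thing I would do is set up this change of variables precisely: define the algebra map $\rho:\D{P}_{\innerprod{Q^2}}\to\D{P}_\lambda$ sending $x^\beta y^\gamma$ to $\innerprod{\gamma,\beta}^{-1/2}x^{\tilde g(\beta+\gamma)-\tilde\phi(\gamma)}$ (or the appropriate normalization making $\rho\circ\int_\Delta[T]=X(T)$), and verify using \eqref{eq:linear} that $\rho$ is indeed an algebra homomorphism for the twisted multiplication on $\D{P}_{\innerprod{Q^2}}$. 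This is the step where the two relations $\lambda(\tilde g(\alpha),\tilde\phi(\gamma))=\innerprod{\gamma,\alpha}$ and $\lambda(\tilde\phi(\alpha),\tilde\phi(\gamma))=(\gamma,\alpha)$ do all the work: they are exactly what is needed to match the structure constant $\frac{\innerprod{\gamma_1,\beta_2}}{\innerprod{\alpha_1,\alpha_2}}$ of $\D{P}_{\innerprod{Q^2}}$ against the commutation factor $\lambda(\cdot,\cdot)^{-1/2}$ in $\D{P}_\lambda$.

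Once $\rho$ is shown to be an algebra morphism, the composite $c=\rho\circ\int_\Delta$ is an $R$-character of $H(Q)$ by Proposition \ref{P:oint}, and the multiplication formula becomes the image under $c$ of the trivial Hall-algebra identity $[U][V]=\sum_{[W]}F_{UV}^W[W]$. Applying $c$ to both sides, the left-hand side becomes $X(U)X(V)$ up to the product of normalizing scalars attached to $U$ and $V$, and the right-hand side becomes $\sum_{[W]}F_{UV}^W\,a_U a_V\,a_W^{-1}\,(\text{scalar}_W)\,X(W)$, where by the Riedtmann formula (the Lemma in Example \ref{Ex:int}) and the comultiplication formula one has $F_{UV}^W a_U a_V a_W^{-1}=\frac{|\Ext_Q(U,V)_W|}{|\Hom_Q(U,V)|}$. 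The remaining bookkeeping is to collect the scalar factors $\innerprod{\gamma,\alpha-\gamma}^{-1/2}$ and the prefactors coming from $\rho$ into the single global constant $\lambda(\tilde g(\alpha_1),\tilde g(\alpha_2))^{1/2}$ in front, and to convert $|\Hom_Q(U,V)|$ into $|\Ext_Q(U,V)|$ on the denominator. For the latter I would use that for rigid (or more generally suitably coprime) modules the difference $\dim\Hom-\dim\Ext=\innerprod{\br U,\br V}_a$ is controlled by the Euler form, so the factor $|\Hom_Q(U,V)|/|\Ext_Q(U,V)|$ is precisely a power of $q$ of the form $\innerprod{\alpha_1,\alpha_2}^{-1}$ that can be absorbed into the constant.

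The main obstacle I expect is precisely this constant-chasing: keeping the half-integer powers of $q$ and the $\lambda$-, $\innerprod{\cdot,\cdot}$-, and $(\cdot,\cdot)$-factors consistent throughout. The danger is that $\int_\Delta$ is built on the double quantum polynomial algebra $\D{P}_{\innerprod{Q^2}}$, whose structure constant $\innerprod{\gamma_1,\beta_2}/\innerprod{\alpha_1,\alpha_2}$ is an honest quadratic expression, whereas $\D{P}_\lambda$ uses the square-root normalization $\lambda(\alpha,\beta)^{-1/2}$, so every identity must be checked at the level of $q^{1/2}$ rather than $q$. I would therefore carry out the verification that $\rho$ respects multiplication as a careful direct computation on monomials $x^{\beta_1}y^{\gamma_1}$ and $x^{\beta_2}y^{\gamma_2}$, matching $\lambda(\tilde g(\alpha_1)-\tilde\phi(\gamma_1),\,\tilde g(\alpha_2)-\tilde\phi(\gamma_2))^{-1/2}$ term by term against $\innerprod{\gamma_1,\beta_2}\innerprod{\alpha_1,\alpha_2}^{-1}$ using \eqref{eq:linear}; this single lemma, once proven, makes the rest of the proof formal. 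Everything else is an application of $c$ to the defining Hall-algebra product, so the substance of the argument is entirely concentrated in establishing that the substitution $\rho$ is an algebra homomorphism.
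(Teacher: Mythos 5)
Your proposal is correct and follows essentially the same route as the paper: the paper's proof is exactly the direct computation you describe --- expand $X(U)X(V)$, use the relations \eqref{eq:linear} to pull out $\lambda(\tilde{g}(\alpha_1),\tilde{g}(\alpha_2))^{\frac{1}{2}}$ and produce the factor $\innerprod{\gamma_1,\beta_2}$, then invoke the identity of Proposition \ref{P:oint} (which is precisely the character property of $\int_\Delta$ applied to $[U][V]=\sum_{[W]}F_{UV}^W[W]$, with the $|\Hom|$-to-$|\Ext|$ conversion already absorbed there). Your repackaging of the monomial substitution as an algebra morphism $\rho$ is just a more structural phrasing of the same bookkeeping; note only that $\dim\Hom-\dim\Ext=\innerprod{\cdot,\cdot}_a$ needs no rigidity hypothesis since $kQ$ is hereditary.
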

\begin{proof} $X(U)X(V)$ \begin{align*} & =\sum_{\gamma_1,\gamma_2}\lambda(\tilde{g}(\alpha_1)-\tilde{\phi}(\gamma_1),\tilde{g}(\alpha_2)-\tilde{\phi}(\gamma_2))^{\frac{1}{2}}\innerprod{\gamma_1,\beta_1}^{-\frac{1}{2}}\innerprod{\gamma_2,\beta_2}^{-\frac{1}{2}}|\Gr_{\gamma_1}(U)||\Gr_{\gamma_2}(V)|x^{\tilde{g}(\alpha)-\tilde{\phi}(\gamma)}\\
& =\lambda(\tilde{g}(\alpha_1),\tilde{g}(\alpha_2))^{\frac{1}{2}}\sum_{\gamma_1,\gamma_2}\innerprod{\gamma,\beta}^{-\frac{1}{2}}\innerprod{\gamma_1,\beta_2}|\Gr_{\gamma_1}(U)||\Gr_{\gamma_2}(V)|x^{\tilde{g}(\alpha)-\tilde{\phi}(\gamma)} \text{\qquad \eqref{eq:linear}}\\
& =\lambda(\tilde{g}(\alpha_1),\tilde{g}(\alpha_2))^{\frac{1}{2}}\sum_{[W]}\sum_{\gamma}\innerprod{\gamma,\beta}^{-\frac{1}{2}}\frac{|\Ext_Q(U,V)_W|}{|\Ext_Q(U,V)|}|\Gr_{\gamma}(W)|x^{\tilde{g}(\alpha)-\tilde{\phi}(\gamma)} \text{\quad (Proposition \ref{P:oint})}\\
& =\lambda(\tilde{g}(\alpha_1),\tilde{g}(\alpha_2))^{\frac{1}{2}}\sum_{[W]}\frac{|\Ext_Q(U,V)_W|}{|\Ext_Q(U,V)|}X(W).
\end{align*}
\end{proof}

\begin{remark} If $U$ and $V$ are {\em adjacent}, that is, there is a representation $W$ such that both $U\oplus W$ and $V\oplus W$ are maximal rigid, then $\Ext_Q(U,V)\oplus\Ext_Q(V,U)=k$. Suppose that $\Ext_Q(U,V)=k$, then the above formula becomes the {\em quantum cluster mutation rule}.
For any (not necessary rigid) module $M$, we define $X(M)$ using the same formula as in Definition \ref{D:CV}. The multiplication formula of Lemma \ref{P:CM} still holds.
Moreover, one can extend the module category to the cluster category to incorporate the {\em initial seeds} $\{x_v\}_{v\in \tilde{Q}_0}$ into the multiplication formula.
\end{remark}

Consider the {\em principally extended quiver} $\tilde{Q}$, whose Euler matrix is of form $\sm{\ E \ \ 0 \\ -I_n \ I_n}$.
Its extended $B$-matrix $\tilde{B}=(B,I_n)$ is always unitally compatible with $\Lambda=\sm{\ \ 0\quad I_n \\ -I_n\ -B}$.
In this case, $\tilde{g}(\alpha)=(g(\alpha),\alpha),\tilde{\phi}(\gamma)=(\phi(\gamma),\gamma)$, and $\D{P}_\lambda$ is the double quantum Laurent polynomial algebra, where the multiplication is given by
\begin{align*}(x^{\alpha_1} y^{\beta_1})(x^{\alpha_2}y^{\beta_2})&=\lambda((\alpha_1,\beta_1),(\alpha_2,\beta_2))^{-\frac{1}{2}}x^{\alpha}y^{\beta}\\
&=q^{{\frac{1}{2}}(\beta_1\cdot\alpha_2-\alpha_1\cdot\beta_2)}(\beta_1,\beta_2)^{\frac{1}{2}}x^{\alpha}y^{\beta}.\end{align*}
Then the quantum cluster variables
$$X(T)=\sum_\gamma\innerprod{\gamma,\alpha-\gamma}^{-\frac{1}{2}}|\Gr_\gamma(T)|x^{g(\alpha)-\phi(\gamma)}y^{\alpha-\gamma},$$
generate the quantum cluster algebra with {\em principal coefficients}, denoted by $C_p(Q)$.
Clearly, $\int_\Delta(T)$ and $X(T)$ are related by a monomial change of variables.
Moreover, Proposition \ref{P:CM} simplifies as
\begin{equation*} \label{eq:QCMP} X(U)X(V)=\sum_{[W]}\frac{|\Ext_Q(U,V)_W|}{|\Ext_Q(U,V)|}X(W). \end{equation*}



\section{Generalization to Flags of Quiver Representations} \label{S:flag}

Let us now consider the iterated comultiplication $$\Delta^t:H(Q)\xrightarrow{\Delta} H(Q)\otimes H(Q)\xrightarrow{\Delta\otimes 1}H(Q)\otimes H(Q)\otimes H(Q)\to\cdots \to H(Q)\otimes\cdots\otimes H(Q).$$
By induction, one can easily see that
$$\Delta^t([W])=\sum_{[V_t]\dots[V_1]} F_{V_t\cdots V_1}^W\frac{a_{V_t}\cdots a_{V_1}}{a_W}[V_t]\otimes\cdots\otimes[V_1],$$
where $F_{V_t\cdots V_1}^W$ is the number of $k$-rational points of $\Fl_{V_t\cdots V_1}^W$.

We can recursively twist the multiplication in $H(Q)^{\otimes t}$ such that $\Delta^t$ is an algebra morphism. The explicit formula is visible: $$(U_t\otimes\cdots\otimes U_1)\cdot(V_t\otimes\cdots\otimes V_1)=\prod_{i>j}\innerprod{\br{U}_i,\br{V}_j} U_tV_t\otimes\cdots\otimes U_1 V_1,$$
where $\beta_i,\gamma_j$ are the dimension vectors for $U_i$ and $V_j$.
We will use the symbol $\innerprod{\beta,\gamma}_<$ (resp. $\innerprod{\beta,\gamma}_>$) to denote $\prod_{i<j}\innerprod{\beta_i,\gamma_j}$ (resp. $\prod_{i>j}\innerprod{\beta_i,\gamma_j}$).
Similarly, we can evaluate its elements in a completed multiple quantum polynomial algebra $\D{P}_{\innerprod{Q^t}}:=\D{P}_{\innerprod{Q}}^{\otimes t}$. Its multiplication is given by $(\prod x_i^{\beta_i})(\prod x_i^{\gamma_i})=\innerprod{\beta,\gamma}_>\prod x_i^{\alpha_i}$, where $\alpha_i=\beta_i+\gamma_i$. Here our convention is that $\prod x_i^{\beta_i}:=x_t^{\beta_t}\cdots x_1^{\beta_1}$. We thus get the following analogue of Proposition \ref{P:oint}

\begin{proposition} \label{P:oint_g} $\int_{\Delta^t}:=\int^{\otimes t}\circ\Delta^t$ is a $\D{P}_{\innerprod{Q^t}}$-character of $H(Q)$. We have a concrete formula: $\int_{\Delta^t}([M])=\frac{1}{a_M}\sum_\alpha|\Fl_{\alpha_t\cdots\alpha_1}(M)|\prod_{i}x_i^{\alpha_i}$ and the following identity: \begin{equation} \sum_{\beta_k+\gamma_k=\alpha_k}\innerprod{\beta,\gamma}_<|\Fl_{\beta_t\cdots\beta_1}(U)||\Fl_{\gamma_t\cdots\gamma_1}(V)|=\sum_{[W]}\frac{|\Ext_Q(U,V)_W|}{|\Ext_Q(U,V)|}|\Fl_{\alpha_t\cdots\alpha_1}(W)|.
\end{equation}
\end{proposition}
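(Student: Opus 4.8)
The plan is to run the proof of Proposition \ref{P:oint} again, only now with $t$ tensor slots instead of two; the statement is the exact $t$-fold analogue, so the sole genuinely new ingredient is the organization of the quantum twists. I would split the argument into a \emph{structural} part, showing that $\int_{\Delta^t}$ is a character, and a \emph{computational} part, deriving the closed formula for $\int_{\Delta^t}([M])$ and then reading off the identity by comparing coefficients.

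For the structural part I would exhibit $\int_{\Delta^t}=\int^{\otimes t}\circ\Delta^t$ as a composite of two algebra morphisms. That $\Delta^t$ is an algebra morphism from $H(Q)$ to $H(Q)^{\otimes t}$, equipped with the recursively twisted multiplication displayed above, I would prove by induction on $t$ via the decomposition $\Delta^t=(\Delta\otimes 1^{\otimes(t-2)})\circ\Delta^{t-1}$: the bialgebra structure of $H(Q)$ makes $\Delta$ an algebra morphism for the single twist, $\Delta\otimes 1^{\otimes(t-2)}$ inherits this, and $\Delta^{t-1}$ is a morphism by the inductive hypothesis, so the composite is one; tracking how the single twists accumulate yields exactly $\prod_{i>j}\innerprod{\br{U}_i,\br{V}_j}$. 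That $\int^{\otimes t}\colon H(Q)^{\otimes t}\to\D{P}_{\innerprod{Q^t}}$ is an algebra morphism I would check slotwise: since $\int$ is itself a character (Example \ref{Ex:int}), the slotwise images multiply according to the quantum relations in the $t$ copies of $\D{P}_{\innerprod{Q}}$, and matching these against the Hall-side twist produces precisely the multiplication defining $\D{P}_{\innerprod{Q^t}}$. This is nothing but the $t$-fold version of the verification that $\int\otimes\int$ is an algebra morphism in Example \ref{Ex:double}. Composing, $\int_{\Delta^t}$ is a character.

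The concrete formula I would obtain by applying $\int^{\otimes t}$ to the iterated coproduct $\Delta^t([M])=\sum_{[V_t]\cdots[V_1]}F_{V_t\cdots V_1}^{M}\tfrac{a_{V_t}\cdots a_{V_1}}{a_M}[V_t]\otimes\cdots\otimes[V_1]$. Because $\int[V_i]=\tfrac{1}{a_{V_i}}x_i^{\br{V}_i}$ on a pure tensor, the automorphism factors $a_{V_i}$ cancel, leaving $\tfrac{1}{a_M}\sum F_{V_t\cdots V_1}^{M}\prod_i x_i^{\br{V}_i}$; grouping the isomorphism classes $[V_i]$ by their dimension vectors $\alpha_i$ and using $F_{V_t\cdots V_1}^{M}=|\Fl_{V_t\cdots V_1}^{M}|$ collapses the inner sum into $|\Fl_{\alpha_t\cdots\alpha_1}(M)|$, which is the asserted formula. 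The identity is then read off from $\int_{\Delta^t}[U]\cdot\int_{\Delta^t}[V]=\int_{\Delta^t}([U][V])$ by comparing the coefficient of $\prod_i x_i^{\alpha_i}$: the $\D{P}_{\innerprod{Q^t}}$-multiplication supplies the twist factors on the left, while $[U][V]=\sum_{[W]}F_{UV}^{W}[W]$ together with Riedtmann's formula $F_{UV}^{W}=\tfrac{|\Ext_Q(U,V)_W|}{|\Hom_Q(U,V)|}\tfrac{a_W}{a_Ua_V}$ (the Lemma in Example \ref{Ex:int}) produces $\tfrac{|\Ext_Q(U,V)_W|}{|\Hom_Q(U,V)|}|\Fl_{\alpha_t\cdots\alpha_1}(W)|$ on the right after the $a$'s cancel. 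The final move is to pass from $\Hom$ to $\Ext$ using the Euler-form relation $|\Hom_Q(U,V)|/|\Ext_Q(U,V)|=\innerprod{\br{U},\br{V}}$, and to use the factorization $\innerprod{\br{U},\br{V}}=\innerprod{\beta,\gamma}_<\innerprod{\beta,\gamma}_>\prod_i\innerprod{\beta_i,\gamma_i}$ (valid since $\br{U}=\sum_i\beta_i$ and $\br{V}=\sum_i\gamma_i$) to reconcile all the boundary and diagonal quantum factors with the single factor $\innerprod{\beta,\gamma}_<$ recorded in the statement.

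The main obstacle I anticipate is exactly this last bookkeeping: keeping the $\binom{t}{2}$ cross-twists, the $t$ diagonal twists, and the global Euler factor straight, and verifying that they conspire to leave precisely $\innerprod{\beta,\gamma}_<$ on the left and $\tfrac{|\Ext_Q(U,V)_W|}{|\Ext_Q(U,V)|}$ on the right. The cleanest way to keep control is to prove once and for all, slotwise, that $\int^{\otimes t}$ is a morphism, so that the twist on the polynomial side is correct by construction, and then let the Euler-form factorization do the collapsing; this reduces the whole computation to the two-slot case already settled in Proposition \ref{P:oint}.
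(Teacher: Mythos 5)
Your proposal is correct and follows exactly the route the paper intends: the paper offers no written proof of Proposition \ref{P:oint_g}, presenting it as the $t$-fold analogue of Proposition \ref{P:oint}, and your argument (iterated twisted coproduct is an algebra morphism, $\int^{\otimes t}$ is checked slotwise, then compare coefficients of $\prod_i x_i^{\alpha_i}$ and convert $|\Hom|$ to $|\Ext|$ via the factorization $\innerprod{\br{U},\br{V}}=\innerprod{\beta,\gamma}_<\innerprod{\beta,\gamma}_>\prod_i\innerprod{\beta_i,\gamma_i}$) is precisely that analogue carried out. The only caution is that the paper's own twist conventions are not perfectly consistent between Sections \ref{S:def} and \ref{S:flag}, so your final bookkeeping step should be anchored by checking that the $t=2$ specialization reproduces the identity of Proposition \ref{P:oint}, which it does.
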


There are also analogues of Corollary \ref{C:QG}, Theorem \ref{T:QG}, and Corollary \ref{C:pos}. We summarize all in one.
Let $r_{\alpha_t\cdots\alpha_1}$ be the rational function $\innerprod{\alpha,\alpha}_>^{-1}r_{\alpha_t}\otimes\cdots\otimes r_{\alpha_1}$, then $\int_\Delta\chi_\alpha=\sum_{\alpha_t+\cdots+\alpha_1=\alpha}r_{\alpha_t\cdots\alpha_1}(q)\prod_i x_i^{\alpha_i}$.

\begin{theorem} \label{T:QF} We have \begin{enumerate}
\item[(i)] $$\sum_{M\in\module_\alpha^{\rm ss}(Q)}\frac{|\Fl_{\alpha_t\cdots\alpha_1}(M)|}{a_M}=\sum_* \sum_{*'}(-1)^{s-1}\prod_{1\leqslant i<j\leqslant s}\innerprod{\alpha_i,\alpha_j}_>\prod_{k=1}^s r_{\alpha_{tk}\cdots\alpha_{1k}}(q),$$
where the first summation $*$ is the same as the one in \eqref{eq:HallID}, and the second summation $*'$ runs over all matrices of dimension vectors $\{\alpha_{ij}\}$ such that $\sum_{j=1}^s \alpha_{ij}=\alpha_i$.

\item[(ii)] Denote the rational function in (i) by $r_{\alpha_t\cdots\alpha_1}^{\rm ss}(q)$, and assume that $\alpha$ is coprime to $\mu$, then $(q-1)r_{\alpha_t\cdots\alpha_1}^{\rm ss}(q)\in\mathbb{Z}[q]$ counts $\sum_{M\in\Mod_\alpha^{\mu}(Q)}|\Fl_{\alpha_t\cdots\alpha_1}(M)|$. If $Q$ has no oriented cycles, this polynomial has non-negative coefficients.
    In particular for a rigid module $T$, $|\Fl_{\alpha_t\cdots\alpha_1}(T)|$ has such a counting polynomial.
\end{enumerate}\end{theorem}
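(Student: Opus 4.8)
The plan is to prove Theorem \ref{T:QF} in exact parallel to the Grassmannian case, replacing the single comultiplication $\Delta$ by the iterated $\Delta^t$ and the character $\int_\Delta$ by $\int_{\Delta^t}$. First I would establish the flag analogue of Lemma \ref{L:grouplike}, namely that $\chi_\alpha$ is group-like for $\Delta^t$: iterating $\Delta(\chi_\alpha)=\sum_{\beta+\gamma=\alpha}\innerprod{\beta,\gamma}^{-1}\chi_\beta\otimes\chi_\gamma$ gives
\begin{equation*}
\Delta^t(\chi_\alpha)=\sum_{\alpha_t+\cdots+\alpha_1=\alpha}\innerprod{\alpha,\alpha}_>^{-1}\chi_{\alpha_t}\otimes\cdots\otimes\chi_{\alpha_1},
\end{equation*}
whence $\int_{\Delta^t}\chi_\alpha=\sum_{\alpha_t+\cdots+\alpha_1=\alpha}r_{\alpha_t\cdots\alpha_1}(q)\prod_i x_i^{\alpha_i}$, as recorded just before the statement. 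For part (i), I would then apply the character $\int_{\Delta^t}$ to the Harder-Narasimhan identity \eqref{eq:HallID}. Because $\int_{\Delta^t}$ is an algebra morphism (Proposition \ref{P:oint_g}), it turns the product $\chi_{\alpha_1}\cdots\chi_{\alpha_s}$ into a product in $\D{P}_{\innerprod{Q^t}}$; the twisting factors $\innerprod{\beta,\gamma}_>$ appearing in that multiplication rule assemble into the claimed $\prod_{i<j}\innerprod{\alpha_i,\alpha_j}_>$, and comparing the coefficient of the monomial $\prod_i x_i^{\alpha_i}$ on both sides—using the concrete formula $\int_{\Delta^t}([M])=\frac{1}{a_M}\sum|\Fl_{\alpha_t\cdots\alpha_1}(M)|\prod_i x_i^{\alpha_i}$—yields the stated identity, with the inner sum $*'$ over matrices $\{\alpha_{ij}\}$ of dimension vectors satisfying $\sum_j\alpha_{ij}=\alpha_i$ arising exactly as the bookkeeping of how each flag degree distributes across the $s$ factors.

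For part (ii), the argument is the geometric counterpart of Corollary \ref{C:coprime1} and Theorem \ref{T:QG}. Under the coprimality hypothesis there are no strictly semistable representations, so $\module_\alpha^{\rm ss}(Q)=\module_\alpha^{\rm st}(Q)$ and the left-hand side of (i), multiplied by $q-1$, counts $\sum_{M\in\Mod_\alpha^\mu(Q)}|\Fl_{\alpha_t\cdots\alpha_1}(M)|$: concretely, this sum is the point count of the moduli space $\Mod^{\hat\mu}(A_t(Q))$ from the flag analogue of Lemma \ref{L:grepmoduli}, which is smooth and irreducible and maps onto $\Mod_\alpha^\mu(Q)$ with fibre $\Fl_{\alpha_t\cdots\alpha_1}(M)$ over $M$. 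The factor $q-1$ comes from quotienting out the scalar automorphisms, exactly as in the passage from $r_\alpha^{\rm ss}(q)$ to the count of $\Mod_\alpha^\mu(Q)$. That this point count is polynomial and integral follows from Lemma \ref{L:Euler}, and when $Q$ is acyclic the moduli space is smooth and projective, hence $l$-pure; combined with polynomial-count, Lemma \ref{L:polycount} forces the counting polynomial to lie in $\B{N}[t]$, giving non-negativity.

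Finally, for the rigid case I would invoke the same reasoning as in Corollary \ref{C:pos}: for a rigid module $T$ with dimension vector a real Schur root, Schofield's theorem lets me choose $\mu$ making the general representation $\mu$-stable, and since $\Mod_\alpha^\mu(Q)$ is then zero-dimensional with $T$ its only polystable point, the formula in (ii) specializes to a counting polynomial for $|\Fl_{\alpha_t\cdots\alpha_1}(T)|$ alone. The main obstacle I anticipate is not any single deep step but the careful verification that the twisting factors of the $t$-fold multiplication on $H(Q)^{\otimes t}$ and on $\D{P}_{\innerprod{Q^t}}$ combine correctly to produce precisely $\innerprod{\alpha_i,\alpha_j}_>$ and to match the monomial coefficients—that is, confirming that $\int_{\Delta^t}$ is genuinely an algebra morphism with the stated explicit formula (Proposition \ref{P:oint_g}) so that applying it to \eqref{eq:HallID} is legitimate. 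The geometric input in (ii) is the other point demanding care, since it requires the flag analogue of Lemma \ref{L:grepmoduli} to supply a smooth irreducible moduli space whose rational points are counted by the left-hand side; granting that lemma, the purity and positivity conclusions are automatic.
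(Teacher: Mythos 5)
Your proposal is correct and follows essentially the same route the paper intends: the paper gives no separate proof of Theorem \ref{T:QF}, presenting it as the direct analogue of Corollary \ref{C:QG}, Theorem \ref{T:QG} and Corollary \ref{C:pos}, obtained exactly as you describe by applying the character $\int_{\Delta^t}$ of Proposition \ref{P:oint_g} to the Harder--Narasimhan identity \eqref{eq:HallID} and then invoking the flag analogue of Lemma \ref{L:grepmoduli} together with Lemmas \ref{L:Euler} and \ref{L:polycount} (and the argument of Corollary \ref{C:pos} for the rigid case). Your identification of the two points needing care --- the bookkeeping of the twisting factors $\innerprod{\alpha_i,\alpha_j}_>$ and the smooth projective flag-moduli space --- matches where the paper's own exposition places the burden.
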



\section{A Functorial Generalization} \label{S:functor}

Let us consider a twisted version of Hall algebra $H(Q)$, where the multiplication is given by
$$[U][V]:=\sum_{[W]}\innerprod{\br{U},\br{V}} F_{UV}^W[W].$$ One can easily verify that this is still an associative algebra.

Motivated by \cite{BHW}, we consider the following relative setting. Let $Q'$ be another quiver, and $F$ be a functor from $\module(Q)$ to $\module(Q')$ such that
$$F \text{ induces an onto map }\Ext_Q(U,V)\xrightarrow{}\Ext_Q'(F(U),F(V))\text{ with equicardinal fibres}.$$
Standard examples of this type are certain forgetful functors. For example, let $Q'$ be a subquiver of $Q$, then $F$ is the forgetful functor induced from the algebra monomorphism: $kQ'\hookrightarrow kQ$. To simplify notations, later we will denote $F(U)$ by $U'$ and Euler form of $Q'$ by $\innerprod{\cdot,\cdot}'$.

\begin{theorem} \label{T:morphism} The map $H(F)_*: H(Q)\to H(Q')$ defined by $M\mapsto\frac{a_{M'}}{a_M}M'$ is an algebra morphism.
\end{theorem}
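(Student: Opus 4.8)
The plan is to verify that $H(F)_*$ respects the twisted multiplication by comparing structure constants on both sides. Writing $\phi(M) = \frac{a_{M'}}{a_M}M'$, I would compute $\phi([U])\phi([V])$ and $\phi([U][V])$ separately in $H(Q')$ and show they agree. Using the twisted product, $[U][V] = \sum_{[W]}\innerprod{\br U,\br V}F_{UV}^W[W]$, so $\phi([U][V]) = \sum_{[W]}\innerprod{\br U,\br V}F_{UV}^W \frac{a_{W'}}{a_W}[W']$. On the other side, $\phi([U])\phi([V]) = \frac{a_{U'}a_{V'}}{a_U a_V}[U'][V'] = \frac{a_{U'}a_{V'}}{a_U a_V}\sum_{[W']}\innerprod{\br{U'},\br{V'}}' F_{U'V'}^{W'}[W']$. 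The task reduces to matching the coefficient of each $[W']$ on the right-hand side against the sum, over all $[W]$ with $W'\cong F(W)$, of the corresponding terms on the left.

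The key input is Riedtmann's formula from Example \ref{Ex:int}, which rewrites each Hall number as $F_{UV}^W = \frac{|\Ext_Q(U,V)_W|}{|\Hom_Q(U,V)|}\frac{a_W}{a_U a_V}$, and its analogue over $Q'$. Substituting this into both expressions, the automorphism factors $a_W, a_{U}, a_V$ (and their primed versions) will largely telescope against the factors defining $\phi$, leaving a comparison between $\frac{\innerprod{\br U,\br V}}{|\Hom_Q(U,V)|}\sum_{W:\,F(W)\cong W'}|\Ext_Q(U,V)_W|$ and $\frac{\innerprod{\br{U'},\br{V'}}'}{|\Hom_{Q'}(U',V')|}|\Ext_{Q'}(U',V')_{W'}|$. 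The sum $\sum_{W:\,F(W)\cong W'}|\Ext_Q(U,V)_W|$ counts all extensions of $V$ by $U$ in $\module(Q)$ whose middle term maps under $F$ to $W'$; this is where the hypothesis on $F$ enters. Since $F$ induces an onto map $\Ext_Q(U,V)\to\Ext_{Q'}(U',V')$ with equicardinal fibres, the extensions landing on $\Ext_{Q'}(U',V')_{W'}$ (those whose image-extension has middle term $W'$) are exactly the preimages of that subset, so the sum equals $|\Ext_{Q'}(U',V')_{W'}|$ times the common fibre cardinality $\frac{|\Ext_Q(U,V)|}{|\Ext_{Q'}(U',V')|}$.

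It then remains to reconcile the Euler-form prefactors. One needs $\innerprod{\br U,\br V} / |\Hom_Q(U,V)|$ against $\innerprod{\br{U'},\br{V'}}' / |\Hom_{Q'}(U',V')|$, together with the fibre-cardinality factor $|\Ext_Q(U,V)|/|\Ext_{Q'}(U',V')|$. Here I would use that the multiplicative Euler form satisfies $\innerprod{\br U,\br V} = q^{\dim\Hom_Q(U,V) - \dim\Ext_Q(U,V)}$, so $\innerprod{\br U,\br V}/|\Hom_Q(U,V)| = 1/|\Ext_Q(U,V)|$ (and likewise over $Q'$); the three remaining factors then cancel exactly, confirming equality of coefficients. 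The main obstacle I anticipate is bookkeeping rather than conceptual: one must be careful that the fibre-counting identity for $F$ on the $\Ext$ spaces genuinely refines to the subspaces $\Ext(U,V)_W$ indexed by middle term, i.e.\ that $F$ sends an extension with middle term $W$ to one with middle term $W'=F(W)$ and that this assignment is compatible with the onto-with-equicardinal-fibres hypothesis. Verifying this compatibility — that the fibres of $\Ext_Q(U,V)\to\Ext_{Q'}(U',V')$ are uniformly distributed across the middle-term strata in the way the computation requires — is the step I would treat most carefully.
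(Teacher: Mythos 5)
Your proposal is correct and follows essentially the same route as the paper's proof: match coefficients of each $[W']$, substitute Riedtmann's formula on both sides so the automorphism factors cancel, use $\innerprod{\br{U},\br{V}}/|\Hom_Q(U,V)|=1/|\Ext_Q(U,V)|$, and reduce to the identity $\sum_{[W]\mid W'=M}|\Ext_Q(U,V)_W|/|\Ext_Q(U,V)|=|\Ext_{Q'}(U',V')_M|/|\Ext_{Q'}(U',V')|$, which is exactly the equicardinal-fibres hypothesis applied to the stratification by middle term. The compatibility point you flag at the end is the same one the paper silently invokes in its final step.
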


\begin{proof} We need to show that \begin{align*}
& \frac{a_{U'}}{a_U}U'\frac{a_{V'}}{a_V}V'=\sum_{[W]}\innerprod{\br{U},\br{V}}F_{UV}^W\frac{a_{W'}}{a_W}W'\\
\Leftrightarrow & \sum_{[M]}\innerprod{\br{U}',\br{V}'}\frac{a_{U'}}{a_U}\frac{a_{V'}}{a_V}F_{U'V'}^{M} M=\sum_{[W]}\innerprod{\br{U},\br{V}}F_{UV}^W\frac{a_{W'}}{a_W}W'\\
\Leftrightarrow & \innerprod{\br{U}',\br{V}'}\frac{a_{U'}}{a_U}\frac{a_{V'}}{a_V}F_{U'V'}^{M}=\sum_{[W]\mid W'=M}\innerprod{\br{U},\br{V}}F_{UV}^W\frac{a_{W'}}{a_W}\\
\Leftrightarrow & \frac{\Ext_{Q'}(U',V')|_M}{\Ext_{Q'}(U',V')}=\sum_{[W]\mid W'=M}\frac{\Ext_{Q}(U,V)|_W}{\Ext_{Q}(U,V)}\\
\Leftrightarrow & \frac{\Ext_{Q}(U,V)}{\Ext_{Q'}(U',V')}=\frac{\sum_{[W]\mid W'=M}\Ext_{Q}(U,V)|_W}{\Ext_{Q'}(U',V')|_M}.
\end{align*}
The last equation holds due to our assumption on the functor $F$.
\end{proof}

The map $H(F)_*$ above is constructed independently in \cite{BG} as well.
Let $Q_0$ be the quiver containing only the vertices of $Q$, and $F$ be the forgetful functor $\module(Q)\to\module(Q_0)$. Then one can easily check that $H(F)_*$ in Theorem \ref{T:morphism} is essentially the Hall character $\int$ defined in section \ref{S:def}.

Let $E$ be a rigid module in $\module(Q)$, i.e., $\Ext_Q(E,E)=0$. We denote by $E^\perp$ the left orthogonal subcategory of $\module(Q)$. Another interesting case is when $F$ is a fully faithful exact embedding of $E^\perp\hookrightarrow\module(Q)$. It is known \cite{GL} that $E^\perp$ is equivalent to $\module(Q_E)$ for another quiver $Q_E$. In this case, $\Ext_{Q_E}(U,V)\cong\Ext_Q(U',V'),a_M=a_{M'}$ and the Euler form are preserved.

\begin{corollary} The embedding $\iota:\module(Q_E)\hookrightarrow \module(Q)$ induces an embedding of algebras $H(Q_E)\hookrightarrow H(Q)$ sending $[M]$ to $[\iota(M)]$. In particular, the (coefficient-free) quantum cluster algebra $C(Q_E)$ is a subalgebra of $C(Q)$.
\end{corollary}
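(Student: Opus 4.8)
The plan is to assemble the corollary from the machinery already in place, using the special properties listed for the orthogonal embedding $F\colon E^\perp\hookrightarrow\module(Q)$. First I would invoke Theorem \ref{T:morphism}: since $F$ induces an \emph{isomorphism} $\Ext_{Q_E}(U,V)\xrightarrow{\sim}\Ext_Q(U',V')$ (in particular an onto map with singleton, hence equicardinal, fibres), the theorem applies and gives an algebra morphism $H(F)_*\colon H(Q_E)\to H(Q)$ sending $[M]\mapsto\frac{a_{M'}}{a_M}[M']$. But here we are told $a_M=a_{M'}$, so $H(F)_*$ is simply $[M]\mapsto[\iota(M)]$, exactly the map in the statement. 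Thus the first half of the corollary is the specialization of Theorem \ref{T:morphism} to this favorable case.

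Next I would argue injectivity. Because $\iota$ is fully faithful, non-isomorphic objects of $\module(Q_E)$ go to non-isomorphic objects of $\module(Q)$, so the images $\{[\iota(M)]\}$ are distinct basis elements of $H(Q)$; hence $[M]\mapsto[\iota(M)]$ is injective on the distinguished basis and therefore an embedding of vector spaces, upgraded to an embedding of algebras by the morphism property just established. One subtlety I would check is that the \emph{twisted} multiplication used in Section \ref{S:functor} matches across the two Hall algebras: the twist uses the Euler form, and we are told the Euler form is preserved by $F$, so the twisting factors $\innerprod{\br U,\br V}$ and $\innerprod{\br{U'},\br{V'}}'$ agree and the morphism respects the twisted products. (One should also confirm the untwisted statement, i.e. that $C(Q_E)\hookrightarrow C(Q)$ does not actually require the twist, but since the cluster variables are defined through $\int_\Delta$ and Definition \ref{D:CV}, and those are built from the same Euler-form data, the compatibility carries over.)

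For the cluster-algebra conclusion I would trace the cluster variables through the embedding. A quantum cluster variable $X(T)$ for $T\in\module_\alpha(Q_E)$ is built from the numbers $|\Gr_\gamma(T)|$ and the Euler form via $\tilde g,\tilde\phi$; since $F$ preserves $\Ext$, hence dimension vectors, submodule structure, and the Euler form, the Grassmannians are identified, $|\Gr_\gamma(T)|=|\Gr_\gamma(\iota(T))|$, and the defining monomials match. Therefore $X(T)$ computed in $C(Q_E)$ maps to $X(\iota(T))$ in $C(Q)$, and the generators $\{x_v\}$ likewise correspond. So the embedding $H(Q_E)\hookrightarrow H(Q)$ descends to $C(Q_E)\hookrightarrow C(Q)$.

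The main obstacle I expect is not any single hard computation but rather bookkeeping: verifying that \emph{every} piece of data entering the definition of the cluster variables---the grading by dimension vectors, the quiver Grassmannians $\Gr_\gamma$, and especially the Euler/multiplicative forms $\innerprod{\cdot,\cdot}$ used both in the twist and in $\int_\Delta$---is genuinely preserved by $\iota$, so that the two characters $\int_\Delta$ on $H(Q_E)$ and on $H(Q)$ are intertwined by $H(F)_*$. Once that compatibility is confirmed, the statement follows formally; the delicate point is that $Q_E$ has its \emph{own} Euler form and one must check it coincides with the restriction of the form on $Q$ under the equivalence $E^\perp\simeq\module(Q_E)$, which is precisely the content of the facts quoted from \cite{GL}.
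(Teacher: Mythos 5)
Your first paragraph is exactly the route the paper intends: the corollary is stated without a written proof precisely because it is the specialization of Theorem \ref{T:morphism} to the perpendicular embedding, using the three facts quoted from \cite{GL} --- $\Ext$ is preserved, so the hypothesis holds with singleton fibres; $a_M=a_{M'}$, so $H(F)_*$ collapses to $[M]\mapsto[\iota(M)]$; and the Euler form is preserved, so the twisting factors on source and target agree and the map is simultaneously a morphism for the twisted and untwisted products. Injectivity from full faithfulness is the right (and only) remaining point. Up to here your argument is correct and coincides with the paper's implicit one.

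The cluster-algebra half of your argument has a genuine gap. You assert $|\Gr_\gamma(T)|=|\Gr_\gamma(\iota(T))|$ on the grounds that $\iota$ preserves ``submodule structure'' (and, in passing, ``dimension vectors,'' which it cannot literally do since $Q_E$ and $Q$ have different vertex sets). The perpendicular category $E^\perp$ is closed under kernels, cokernels and extensions, but \emph{not} under subobjects, so a subrepresentation of $\iota(T)$ taken in $\module(Q)$ need not lie in $E^\perp$, and the quiver Grassmannian of $\iota(T)$ in $\module(Q)$ is in general strictly larger than the Grassmannian of $T$ in $\module(Q_E)$. Concretely, for $Q=A_2$ ($1\to 2$) and $E=S_1$, one has $E^\perp\simeq\module(A_1)$ with unique simple corresponding to $P_1=(k,k,\mathrm{id})$; this $P_1$ has three subrepresentations in $\module(Q)$ (namely $0$, $S_2$, $P_1$) but only two as an object of $E^\perp$. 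Hence the cluster character of Definition \ref{D:CV} computed in $C(Q_E)$ is not sent to $X(\iota(T))$ by a monomial substitution, and the ``In particular'' does not follow formally from the Hall-algebra embedding in the way you describe. To be fair, the paper offers no argument for this last sentence either; making it precise requires specifying how the initial seed of $Q_E$ is realized among the cluster variables of $Q$ and comparing cluster characters across the perpendicular embedding, which is genuinely more than preservation of Grassmannians.
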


Dually, we can twist the comultiplication:
$$\Delta([W])=\sum_{[U][V]} \innerprod{\br{U},\br{V}}F_{UV}^W\frac{a_Ua_V}{a_W}[U]\otimes[V]=\sum_{[U][V]}\frac{|\Ext_Q(U,V)_W|}{|\Ext_Q(U,V)|}[U]\otimes[V].$$
Then readers can verify that the map $H(F)^*:H(Q')\to H(Q)$ defined by $N\mapsto\sum_{M|M'=N}M$ is a coalgebra morphism.

%
%
%

\section{Hopf structure with group-like elements} \label{S:Hopf}

In this section, we keep multiplication untwisted but comultiplication twisted as at the end of Section \ref{S:functor}. As before, the multiplication and comultiplication are not naturally compatible. But if we treat $H(Q)$ as a braided monoidal category with braiding $\psi: U\otimes V\mapsto \innerprod{\br{V},\br{U}}V\otimes U$. Then the multiplication on $H(Q)\otimes H(Q)$ is twisted as
$$(U_1\otimes V_1)\cdot(U_2\otimes V_2)=\innerprod{\br{U}_2,\br{V}_1}U_1U_2\otimes V_1V_2.$$ As before, one can verify this makes $H(Q)$ a (braided) bialgebra.

Let us consider the following map $S$ from $H(Q)$ to itself:
\begin{align*}S([W])= & -[W]+\sum_{[V_2],[V_1]}\innerprod{\gamma_2,\gamma_1}F_{V_2 V_1}^W\frac{a_{V_2}a_{V_1}}{a_W}[V_2][V_1]+\cdots+\\
& (-1)^{t}\sum_{[V_t]\dots[V_1]}\prod_{i>j}\innerprod{\gamma_i,\gamma_j}F_{V_t\cdots V_1}^W\frac{a_{V_t}\cdots a_{V_1}}{a_W}[V_t]\cdots[V_1]+\cdots.
\end{align*}
Symbolically we can think $S$ as $\sum (-1)^{i+1} m^i\Delta^i$.
It is well-known \cite[Theorem 2.14,2.18]{Hu} that by adding a degree-zero piece $K_0(Q)$, we can make a different twisted Hall algebra an honest bialgebra, then the same symbolic expression of $S$ becomes an antipode upgrading it to a Hopf algebra. In fact, the above $S$ is also an antipode for our $H(Q)$, making it a braided Hopf algebra (\cite[Definition 2.6]{KP}). The crucial identity we need to verify is that $m(S\otimes\Id)\Delta =\eta\epsilon= m(\Id\otimes S)\Delta$, but this procedure is essentially the same as in \cite{Hu}.

The antipode is useful for finding inverse of certain elements in $H(Q)$. Recall a {\em group-like element} $g$ in a coalgebra is the one satisfying $\Delta(g)=g\otimes g$ and $\epsilon(g)=1$. Let $g$ be a group-like element in a Hopf algebra, then $m(S\otimes\Id)\Delta(g)=\eta\epsilon(g)= m(\Id\otimes S)\Delta(g)$ implies that $S(g)g=1=gS(g)$. Hence $S(g)$ is the (two-sided) inverse of $g$.

Due to our twisted comultiplication, Lemma \ref{L:grouplike} becomes
\begin{lemma} $\Delta(\chi_\alpha)=\sum_{\beta+\gamma=\alpha}\chi_\beta\otimes\chi_\gamma,$
so $\Delta(\chi)=\chi \otimes \chi$. In particular, $\chi^{-1}=S(\chi)$.
\end{lemma}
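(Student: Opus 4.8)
The plan is to verify the comultiplication formula $\Delta(\chi_\alpha)=\sum_{\beta+\gamma=\alpha}\chi_\beta\otimes\chi_\gamma$ directly from the definition of the twisted comultiplication and then deduce the group-like and inverse statements as formal consequences. First I would unwind the left-hand side. By definition $\chi_\alpha=\sum_{\br{W}=\alpha}[W]$, and applying the twisted comultiplication from the end of Section \ref{S:functor},
\begin{equation*}
\Delta(\chi_\alpha)=\sum_{\br{W}=\alpha}\;\sum_{[U][V]}\frac{|\Ext_Q(U,V)_W|}{|\Ext_Q(U,V)|}[U]\otimes[V].
\end{equation*}
Grouping the right-hand side by the dimension vectors $\beta=\br{U}$ and $\gamma=\br{V}$ (which necessarily satisfy $\beta+\gamma=\alpha$), the coefficient of a fixed pair $[U]\otimes[V]$ is $\sum_{\br{W}=\alpha}|\Ext_Q(U,V)_W|/|\Ext_Q(U,V)|$. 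The key step is the identity $\sum_{[W]}|\Ext_Q(U,V)_W|=|\Ext_Q(U,V)|$: since every extension of $V$ by $U$ has some middle term $W$, the subsets $\Ext_Q(U,V)_W$ partition $\Ext_Q(U,V)$ as $[W]$ ranges over all isomorphism classes. Hence the coefficient of $[U]\otimes[V]$ collapses to $1$, and the double sum reduces to $\sum_{\beta+\gamma=\alpha}\bigl(\sum_{\br{U}=\beta}[U]\bigr)\otimes\bigl(\sum_{\br{V}=\gamma}[V]\bigr)=\sum_{\beta+\gamma=\alpha}\chi_\beta\otimes\chi_\gamma$, as claimed.

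Having established this, the statement $\Delta(\chi)=\chi\otimes\chi$ follows by summing over all $\alpha$: writing $\chi=\sum_\alpha\chi_\alpha$, we get $\Delta(\chi)=\sum_\alpha\Delta(\chi_\alpha)=\sum_\alpha\sum_{\beta+\gamma=\alpha}\chi_\beta\otimes\chi_\gamma=\bigl(\sum_\beta\chi_\beta\bigr)\otimes\bigl(\sum_\gamma\chi_\gamma\bigr)=\chi\otimes\chi$, where the rearrangement is legitimate in the completed Hall algebra. One should also note $\epsilon(\chi)=1$, since the counit picks out the $[0]$-coefficient and $\chi$ contains $[0]$ with coefficient one; this confirms $\chi$ is genuinely group-like.

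Finally, the inverse claim is purely formal once we know $\chi$ is group-like in a Hopf algebra. As recorded in the preceding discussion, for any group-like $g$ the antipode identity $m(S\otimes\Id)\Delta(g)=\eta\epsilon(g)=m(\Id\otimes S)\Delta(g)$ together with $\Delta(g)=g\otimes g$ and $\epsilon(g)=1$ yields $S(g)g=1=gS(g)$, so $S(g)=g^{-1}$. Applying this to $g=\chi$ gives $\chi^{-1}=S(\chi)$.

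I do not anticipate a serious obstacle here; the only point requiring care is the interchange of the two infinite sums and the grouping step in the completed algebra $H(Q)$. Because the Hall algebra is graded by dimension vectors and every fixed graded piece involves only finitely many terms, these manipulations are justified degree by degree, so the main work is really the elementary partition identity $\sum_{[W]}|\Ext_Q(U,V)_W|=|\Ext_Q(U,V)|$ that makes the twisted comultiplication send $\chi_\alpha$ to the clean sum over splittings $\beta+\gamma=\alpha$.
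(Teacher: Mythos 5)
Your proposal is correct and matches the paper's (largely implicit) argument: the paper simply observes that the twist factor $\innerprod{\br{U},\br{V}}$ in the comultiplication cancels the $\innerprod{\beta,\gamma}^{-1}$ appearing in Lemma \ref{L:grouplike}, and your direct computation via the partition identity $\sum_{[W]}|\Ext_Q(U,V)_W|=|\Ext_Q(U,V)|$ is exactly the content of that verification. The deduction of $\chi^{-1}=S(\chi)$ from the group-like property is likewise the same formal argument the paper records just before the lemma.
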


An immediate consequence of $S$ being an antipode is that $S$ is a braided homomorphism.

\begin{lemma} {\em \cite[Proposition 2.10]{KP}} \label{L:antihom} $Sm = m\psi(S\otimes S) = m(S\otimes S)\psi$.
\end{lemma}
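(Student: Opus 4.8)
The plan is to prove this by the standard convolution-algebra argument, adapted to the braided setting. First observe that the two asserted expressions on the right are equal for formal reasons: since $S$ is a morphism in the braided monoidal category and the braiding $\psi$ is natural, we have $\psi(S\otimes S)=(S\otimes S)\psi$, so $m\psi(S\otimes S)=m(S\otimes S)\psi$. Thus it suffices to establish the single identity $Sm=m(S\otimes S)\psi$, and I will do this by exhibiting both sides as the (necessarily unique) convolution inverse of $m$.

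Concretely, I would work in the convolution algebra $\Hom(H\otimes H,H)$, where $H\otimes H$ carries the braided tensor-coalgebra comultiplication $\Delta_{H\otimes H}=(\Id\otimes\psi\otimes\Id)(\Delta\otimes\Delta)$ and the convolution product of $f,g$ is $f*g:=m(f\otimes g)\Delta_{H\otimes H}$, with unit $\eta(\epsilon\otimes\epsilon)$. The braided bialgebra axiom (that $\Delta$ is an algebra morphism for the twisted multiplication on $H\otimes H$) is exactly the statement that $m\colon H\otimes H\to H$ is a coalgebra morphism for this structure. Given that, the first step is routine: using the antipode property $m(\Id\otimes S)\Delta=\eta\epsilon$ together with the fact that $m$ is a coalgebra map, one computes
\begin{equation*}
m*(Sm)=m(m\otimes Sm)\Delta_{H\otimes H}=m(\Id\otimes S)(m\otimes m)\Delta_{H\otimes H}=m(\Id\otimes S)\Delta_H m=\eta(\epsilon\otimes\epsilon),
\end{equation*}
and symmetrically $(Sm)*m=\eta(\epsilon\otimes\epsilon)$, so $Sm$ is the two-sided convolution inverse of $m$.

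The second and harder step is to show that $m(S\otimes S)\psi$ is also a convolution inverse of $m$, i.e.\ that $m*\bigl(m(S\otimes S)\psi\bigr)=\eta(\epsilon\otimes\epsilon)$. Here one expands $\Delta_{H\otimes H}$ and pushes the two copies of $S$ inward, repeatedly invoking the antipode axiom, the hexagon/naturality relations for $\psi$, and the precise twist $\innerprod{\br{U},\br{V}}$ appearing in the braided multiplication; this is where the braiding must be threaded carefully and is the main obstacle, since the bookkeeping of $\psi$ is exactly what distinguishes the braided case from the classical one. Once this identity is verified, uniqueness of two-sided inverses in the convolution algebra forces $m(S\otimes S)\psi=Sm$, which combined with the naturality observation above yields $Sm=m\psi(S\otimes S)=m(S\otimes S)\psi$. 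As the excerpt already notes that the verification of the antipode axioms proceeds ``essentially the same as in \cite{Hu}'', I expect the braided computation in this second step to be the only genuinely new work, following the template of \cite[Proposition 2.10]{KP}.
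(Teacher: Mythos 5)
The paper does not prove this lemma at all: it is quoted verbatim from \cite[Proposition 2.10]{KP}, so there is no internal argument to compare against, and the only question is whether your proposal is sound. Your framework is the standard one (and is essentially what the cited reference does): pass to the convolution monoid $\Hom(H\otimes H,H)$, where $H\otimes H$ carries the braided coalgebra structure $(\Id\otimes\psi\otimes\Id)(\Delta\otimes\Delta)$, show that $Sm$ is the two-sided convolution inverse of $m$, and then identify $m(S\otimes S)\psi$ with that inverse by uniqueness. Your first computation is correct, using the braided bialgebra axiom $\Delta m=(m\otimes m)\Delta_{H\otimes H}$, the antipode identity $m(\Id\otimes S)\Delta=\eta\epsilon=m(S\otimes\Id)\Delta$, and $\epsilon m=\epsilon\otimes\epsilon$. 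The reduction $m\psi(S\otimes S)=m(S\otimes S)\psi$ via naturality of $\psi$ is also legitimate in this concrete setting, because $S$ preserves the grading by dimension vector and $\psi(U\otimes V)=\innerprod{\br{V},\br{U}}V\otimes U$ depends only on the degrees.

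The genuine gap is that the entire content of the lemma in the braided setting lives in your second step, which you describe but do not execute: the verification that $m(S\otimes S)\psi$ is a convolution inverse of $m$. As written, this is a plan rather than a proof; the ``pushing $S$ inward'' you allude to is precisely where the coefficient $\innerprod{\br{U}_2,\br{V}_1}$ in the twisted product on $H\otimes H$ must cancel against the braiding coefficient $\innerprod{\br{V},\br{U}}$, and nothing in your write-up certifies that this cancellation actually closes. Two remarks that would shrink the remaining work: first, since you have already shown $Sm$ to be a \emph{two-sided} inverse of $m$, it suffices to check that $m(S\otimes S)\psi$ is a one-sided inverse (a one-sided inverse of an element possessing a two-sided inverse equals it), so only one of the two convolution identities needs to be computed; second, in the present situation the computation can be done degree by degree on basis elements $[U]\otimes[V]$, where all sums are finite and the coefficients are explicit, so the hexagon axiom reduces to bilinearity of $\innerprod{-,-}$ in the exponents. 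Until that single identity is written out, the proof is incomplete, though the approach itself is the right one.
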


We can compose $S$ with the character $\int$ to get a new character $\Xint S:=\int S$. We have
$$\Xint S(W)=a_W^{-1}\sum_{i=0} (-1)^{i+1} F_i(W)x^\alpha,$$
where $F_i(W)$ is the number of $i$-step filtrations of $W$. It follows from Lemma \ref{L:antihom} that $\Xint S$ is a character to the usual polynomial algebra because the twist from the braiding $\psi$ balances the twist in the $\D{P}_{\innerprod{Q}}$. We denote the number $\sum_{i=0} (-1)^{i} F_i(W)$ by $F_W$.

\begin{lemma} \label{L:Fw} If $W$ is a direct sum of simples: $\bigoplus_{[S]}S^{m_S}$ and let $q_S=|\End_Q(S)|$, then $F_W=\prod_{[S]}(-1)^{m_S}q_S^{\sm{m_S\\2}}$ ; otherwise $F_W=0$.
\end{lemma}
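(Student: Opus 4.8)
The plan is to compute $F_W = \sum_{i\geq 0}(-1)^i F_i(W)$ by exploiting the fact that $\Xint S = \int S$ is an \emph{algebra morphism} (a Hall character to the ordinary polynomial algebra), as established just before the statement via Lemma \ref{L:antihom}. The key observation is that $F_W$ is, up to the monomial $x^\alpha$ and the factor $a_W^{-1}$, essentially the value $\Xint S(W)$, and since $\Xint S$ is multiplicative, I should try to reduce the computation of $F_W$ for an arbitrary $W$ to the computation for the simple modules $S$. Specifically, because $S = \int \circ\, S$ and $S$ is the antipode, $\Xint S(\chi)$ inverts the group-like element, but for the present lemma I want the value on a single module rather than on the generating series.

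\emph{First} I would treat the generating-function viewpoint. Consider the element $\chi = \sum_{[M]}[M]$ (or more precisely work degree by degree with $\chi_\alpha$). Since $\Xint S$ is an $R$-character (algebra morphism) and $S$ is an antihomomorphism up to the braiding, the value $\Xint S(W)$ factors through the semisimplification in the following sense: the only modules $W$ that can contribute a nonzero value are the semisimple ones. For a nonsemisimple $W$ the alternating count $\sum_i (-1)^i F_i(W)$ should telescope to zero. The cleanest route is to show that the generating series $\sum_{[W]} \Xint S(W)\, a_W\, y^{[W]}$ (a suitable bookkeeping) is the multiplicative inverse of $\sum_{[W]} \int(W)\, a_W\, y^{[W]}$, and that the latter factors as a product over simples because of the way $\int$ and the quantum-polynomial multiplication interact. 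Concretely, for each simple $S$ the local generating series is $\sum_{m\geq 0}\frac{1}{a_{S^m}}|\text{filtration data}|$, and $a_{S^m} = |\GL_m(\mathbb{F}_{q_S})|$ records the factor $q_S^{\binom{m}{2}}$ together with the $\prod(q_S^j-1)$ terms. Inverting the exponential-type series for a single simple yields precisely the sign $(-1)^{m_S}$ and the power $q_S^{\binom{m_S}{2}}$.

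\emph{The main obstacle} I anticipate is the combinatorial identity at the level of a single simple: one must verify that for $W = S^m$ the alternating sum of filtration counts equals $(-1)^m q_S^{\binom{m}{2}}$, which amounts to a $q$-analogue identity for the number of complete and partial flags in $\mathbb{F}_{q_S}^m$ weighted by the multiplication twist from $\D{P}_{\innerprod{Q}}$. This is the quantum-binomial inversion $\sum_i (-1)^i q^{\binom{i}{2}} \binom{m}{i}_q = 0$ for $m>0$ (and the appropriate boundary value for $m=0$), which is exactly the kind of identity that makes the antipode square to a twist in the finite quantum group. The vanishing $F_W = 0$ for nonsemisimple $W$ should then follow because the braided multiplicativity of $\Xint S$ forces $\Xint S(W)$ to depend only on $\br{W}$ through the simple-socle data; any genuine extension contributes cancelling filtration terms.

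\emph{Finally} I would assemble the pieces: multiplicativity of $\Xint S$ gives $F_{\bigoplus S^{m_S}} = \prod_{[S]} F_{S^{m_S}}$ (with the appropriate normalization absorbing the cross-term twists $\innerprod{-,-}$, which vanish between distinct simples in the semisimple case), each factor computed as $(-1)^{m_S} q_S^{\binom{m_S}{2}}$ by the quantum-binomial identity above, and the vanishing for nonsemisimple $W$ handled separately. The normalization by $a_W = \prod_{[S]} |\GL_{m_S}(\mathbb{F}_{q_S})|$ is what converts the raw filtration count into the clean formula stated in Lemma \ref{L:Fw}.
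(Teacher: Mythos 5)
Your outline of the semisimple half is sound and close in spirit to the paper's: the paper also reduces to a single simple by multiplicativity of $\Xint S$ (peeling off one copy of $S$ at a time via $[U][S]=[m_S]_{q_S}[W]$ for $W=U\oplus S$) and then the computation comes down to $a_{S^m}=|\GL_m(\End_Q(S))|$ and the same quantum-binomial bookkeeping you describe. One caveat even here: inverting the generating series $\sum_m a_{S^m}^{-1}x^{m\br{S}}$ in $\D{P}_{\innerprod{Q}}$ only determines the sums $\sum_{\br{W}=\alpha}F_W/a_W$, not the individual $F_W$; to isolate $F_{S^m}$ you must either invert $\chi$ inside the Hall algebra of the semisimple subcategory (where $[S^{m-k}][S^k]=\binom{m}{k}_{q_S}[S^m]$, giving $\sum_k(-1)^kq_S^{\binom{k}{2}}\binom{m}{k}_{q_S}=\delta_{m,0}$) or argue as the paper does. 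That is a repair, not a rewrite.

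The genuine gap is the second half of the statement, $F_W=0$ for non-semisimple $W$, which you dispatch with ``the braided multiplicativity of $\Xint S$ forces $\Xint S(W)$ to depend only on $\br{W}$ through the simple-socle data; any genuine extension contributes cancelling filtration terms.'' This is not an argument, and the claim that the value depends only on $\br{W}$ is false: for the $A_2$ quiver the indecomposable of dimension $(1,1)$ has $F_W=-1+1=0$ while the semisimple $S_1\oplus S_2$ of the same dimension has $F_W=-1+2=1$. Multiplicativity of a character tells you nothing about its value on a class $[W]$ that does not factor in the Hall algebra, and a non-semisimple indecomposable does not. The paper's actual mechanism is different and is the crux of the proof: apply $\int\otimes\Xint S$ to the antipode axiom $m(\Id\otimes S)\Delta([W])=\eta\epsilon([W])=0$ to obtain $\sum_{[U],[V]}F_{UV}^W\,F_V=0$; by induction on $\dim W$ only semisimple $V$ contribute besides $V=W$ itself, all such $V$ sit inside the socle $T=\bigoplus_{[S]}S^{t_S}$ of $W$ (proper since $W$ is not semisimple), and the resulting sum $\sum_{v_S\leqslant t_S}\prod_{[S]}\binom{t_S}{v_S}_{q_S}(-1)^{v_S}q_S^{\binom{v_S}{2}}=\prod_{[S]}\prod_{k=0}^{t_S-1}(1-q_S^k)$ vanishes because of the $k=0$ factor. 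You would need to supply this (or an equivalent) argument; as written, half the lemma is unproved.
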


\begin{proof} Suppose that $W=\bigoplus_{[S]}S^{m_S}$, without loss of generality we can assume our category is semisimple.
We will prove the claim by induction: let $W=U\oplus S$, then $US=\sm{m_s\\1}_{q_S}W$, so $[m_S]_{q_S}\Xint S(W)=\Xint S(U)\Xint S(S)$. Then $F_W=[m_S]_{q_S}^{-1}F_U F_S \frac{a_W}{a_U a_S}$
\begin{align*}
& =\prod_{[S]}(-1)^{m_S-1}q_S^{\sm{m_S-1\\2}}(-1)\frac{|\GL_{m_S}(\End_Q(S))|}{|\GL_{m_S-1}(\End_Q(S))||\GL_1(\End_Q(S))|}\\
& = \prod_{[S]}(-1)^{m_S}q_S^{\sm{m_S-1\\2}}\frac{[m_S]_{q_S}!(q_S-1)^{m_S}q_S^{\sm{m_s\\2}}}{[m_s]_{q_s}[m_S-1]_{q_S}!(q_S-1)^{m_S-1}q_S^{\sm{m_s-1\\2}(q_S-1)}}\\
& =\prod_{[S]}(-1)^{m_S}q_S^{\sm{m_S\\2}}.
\end{align*}

If $W$ is not semisimple, then let $\{T_i\}$ be the maximal semisimple subrepresentations of $W$. We will prove by induction on the dimension of $W$. Recall that
$$\Delta([W])=\sum_{[U][V]} \innerprod{\br{U},\br{V}}F_{UV}^W\frac{a_Ua_V}{a_W}[U]\otimes[V].$$ Since $m(\Id\otimes S)\Delta(W)=\eta\epsilon(W)=0$, we have that
$$0=m(\int\otimes\Xint S)\Delta(W)=\sum_{[U][V]} F_{UV}^W a_W^{-1} F_V x^\alpha.$$
So $F_W=\sum_{[U][V]} F_{UV}^W F_V$, where by induction $V$ runs only for semisimple proper subrepresentation of $W$.
Since any semisimple subrepresentation of $W$ is a subrepresentation of $T_i$ for some $i$. It is suffice to show that $\sum_{[U][V]} F_{UV}^{T_i} F_V=0$. Suppose that $T_i=\bigoplus_{[S]}S^{t_S}$ and $V=\bigoplus_{[S]}S^{v_S}$, then $F_{UV}^{T_i}=\prod_{[S]} \sm{t_S\\v_S}_{q_S}$. By induction,
\begin{align*} \sum_{[U][V]} F_{UV}^{T_i} F_V & =\sum_{v_S\leqslant t_S} \prod_{[S]} \sm{t_S\\v_S}_{q_S} \prod_{[S]}(-1)^{v_S}q_S^{\sm{v_S\\2}}\\
& =\prod_{[S]} \sum_{v_S\leqslant t_S}  (-1)^{v_S}q_S^{\sm{v_S\\2}} \sm{t_S\\v_S}_{q_S}\\
& = \prod_{[S]}\prod_{k=0}^{t_S-1}(1-q^k) =0. \tag{quantum binomial expansion}
\end{align*}
\end{proof}

The above lemma also holds for any finite-dimensional algebra $A=kQ/I$ because a representation of $A$ is also a representation of $Q$.
In a different direction, we can consider the full exact subcategory $\D{C}$ of $\module(Q)$. We define $F_W(\D{C})$ to be $F_W$ viewing $W$ as an object in $\D{C}$, i.e., the alternating sum of $t$-step Jordan-Holder filtration in $\D{C}$.

\begin{corollary} Let $W=\bigoplus_{[S]}S^{m_S}$ be the semisimple decomposition in $\D{C}$ and let $q_S=|\End_Q(S)|$, then $F_W(\D{C})=\prod_{[S]}(-1)^{m_S}q_S^{\sm{m_S\\2}}$ ; otherwise $F_W(\D{C})=0$.
\end{corollary}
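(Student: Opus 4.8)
The plan is to deduce this Corollary from Lemma \ref{L:Fw} by a pure change-of-context argument rather than by redoing the computation. The essential observation is that Lemma \ref{L:Fw} was proved for the Hall algebra $H(Q)$, but the only structural facts used in its proof are: (a) $H(Q)$ is a braided Hopf algebra with the twisted comultiplication and antipode $S$; (b) $\int$ is a character and $\Xint S=\int S$ is a character to the usual polynomial algebra; and (c) the numbers $F_{UV}^W$, $a_M$, and the Euler form behave as stated. Since $\D{C}$ is a \emph{full exact subcategory} of $\module(Q)$, it is itself a finitary hereditary (more precisely, exact and finitary) category, and therefore carries its own Hall algebra $H(\D{C})$ with exactly the same formal structure: the same twisted comultiplication, the same antipode $S$, and the same characters $\int$ and $\Xint S$. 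The only subtlety is that extensions, and hence the Hall numbers $F_{UV}^W$, are computed \emph{internally} to $\D{C}$, which is precisely what the definition of $F_W(\D{C})$ records.

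First I would make explicit that all inputs to the proof of Lemma \ref{L:Fw} are available verbatim in $\D{C}$. The key identities are the antipode relation $m(\Id\otimes S)\Delta = \eta\epsilon$ and Lemma \ref{L:antihom}, both of which hold in any braided Hopf algebra and hence in $H(\D{C})$; the semisimple-case computation, which only uses that for a semisimple object $W=U\oplus S$ one has $[m_S]_{q_S}\Xint S(W)=\Xint S(U)\Xint S(S)$ together with the order of $\GL_{m_S}(\End(S))$; and the quantum binomial identity $\prod_{k=0}^{t_S-1}(1-q_S^k)=0$, which is a formal identity independent of the ambient category. Then I would simply transport the two-part induction (first on direct sums of simples via the semisimple reduction, then on dimension for the non-semisimple case using maximal semisimple subobjects $T_i$ of $\D{C}$) word for word into $H(\D{C})$, obtaining $F_W(\D{C})=\prod_{[S]}(-1)^{m_S}q_S^{\binom{m_S}{2}}$ when $W$ is semisimple in $\D{C}$ and $F_W(\D{C})=0$ otherwise.

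The one point that genuinely requires care—and which I expect to be the main obstacle—is that ``semisimple in $\D{C}$'' need not coincide with ``semisimple in $\module(Q)$,'' and the simple objects of $\D{C}$ need not be simple $kQ$-modules. The induction in Lemma \ref{L:Fw} is driven by the maximal semisimple subrepresentations $T_i$; in the relative setting these must be replaced by the maximal $\D{C}$-semisimple subobjects, and the filtrations counted by $F_i(W)$ must be filtrations \emph{with subquotients in $\D{C}$}. What makes this work is exactness of $\D{C}$: the short exact sequences used to define $F_{UV}^W$, $\Delta$, and the antipode all remain short exact inside $\D{C}$, so the quantities $\End(S)$, $a_M$, and the internal extension spaces are unambiguous, and a simple object of $\D{C}$ is a division-ring object in the sense needed for the $\GL$ computation. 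Once one checks that $\D{C}$ being full and exact guarantees $H(\D{C})$ is a sub-bialgebra-like structure closed under the relevant operations, the entire argument of Lemma \ref{L:Fw} applies unchanged, and the Corollary follows.
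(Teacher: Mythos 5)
Your proposal is correct and follows essentially the same route as the paper: both arguments observe that, because $\D{C}$ is a full exact subcategory, the Hom-spaces, $\Ext^1$-spaces, Euler form, automorphism counts, and the Hopf-algebra formalism ($\int$ being a character, $\chi$ group-like, $S$ an antipode) all carry over verbatim, so the proof of Lemma \ref{L:Fw} applies unchanged with $\D{C}$-simples in place of $kQ$-simples. Your explicit attention to the fact that semisimplicity in $\D{C}$ differs from semisimplicity in $\module(Q)$ is a point the paper leaves implicit, but it does not change the argument.
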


\begin{proof} Since $\D{C}$ is full exact subcategory of $\module(Q)$, $\Hom_Q(M,N)=\Hom_{\D{C}}(M,N)$, $\Ext_Q^1(M,N)=\Ext_{\D{C}}^1(M,N)$, and in particular, $\Hom_{\D{C}}(-,-)-\Ext_{\D{C}}^1(-,-)=\innerprod{-,-}_a$. The key point is that all the crucial ingredients in the proof of Lemma \ref{L:Fw} only rely on this fact. For example, $\int$ being a character and $\chi$ being a group-like element. However, $S$ being an antipode is even independent of this fact.
\end{proof}

Now fix a slope function $\mu$ and let $\D{C}=\module_{\mu_0}(Q)$. If $W\in\module_{\mu_0}(Q)$, we will write $F_W^\mu$ instead of $F_W(\module_{\mu_0}(Q))$.

\begin{lemma} {\em \cite[Lemma 3.4]{R2}} \label{L:chiinv} $$(\chi_{\mu_0}^{\rm ss})^{-1}=\sum_{[W]\in\module_{\mu_0}^{\rm ss}(Q)} F_W^\mu[W].$$
\end{lemma}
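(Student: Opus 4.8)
The plan is to recognize $\chi_{\mu_0}^{\rm ss}$ as a group-like element in the braided Hopf algebra structure on the exact subcategory $\module_{\mu_0}(Q)$, and then invoke the antipode machinery already established. Concretely, the first step is to verify that $\chi_{\mu_0}^{\rm ss}=\sum_{[W]\in\module_{\mu_0}^{\rm ss}(Q)}[W]$ is group-like, i.e. $\Delta(\chi_{\mu_0}^{\rm ss})=\chi_{\mu_0}^{\rm ss}\otimes\chi_{\mu_0}^{\rm ss}$ and $\epsilon(\chi_{\mu_0}^{\rm ss})=1$, when $\Delta$ is the twisted comultiplication restricted to the subcategory $\D{C}=\module_{\mu_0}(Q)$ of semistable objects of slope $\mu_0$. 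This should follow by the same computation as in Lemma \ref{L:grouplike} (in its twisted form from Section \ref{S:Hopf}): expanding $\Delta(\chi_{\mu_0}^{\rm ss})$ gives $\sum_{[U][V]}\frac{|\Ext(U,V)_W|}{|\Ext(U,V)|}[U]\otimes[V]$ summed over semistable $W$, and because $\module_{\mu_0}(Q)$ is an exact abelian subcategory closed under extensions, every extension of two slope-$\mu_0$ semistable objects $U,V$ is again semistable of slope $\mu_0$, while the sub/quotient of a semistable object in an extension stays in the category. The cardinality bookkeeping collapses exactly as in the untwisted case to give $\chi_{\mu_0}^{\rm ss}\otimes\chi_{\mu_0}^{\rm ss}$.

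Once group-likeness is in hand, the second step is purely formal: as noted in the discussion preceding Lemma \ref{L:antihom}, for a group-like $g$ in a Hopf algebra the antipode identity $m(S\otimes\Id)\Delta(g)=\eta\epsilon(g)=m(\Id\otimes S)\Delta(g)$ forces $S(g)g=1=gS(g)$, so $S(\chi_{\mu_0}^{\rm ss})=(\chi_{\mu_0}^{\rm ss})^{-1}$. Here I would apply the antipode $S$ internal to the subcategory $\D{C}=\module_{\mu_0}(Q)$; the preceding corollary guarantees that $S$ restricts to an antipode on the Hall algebra of any full exact subcategory, since $S$ being an antipode was observed to be independent even of the Euler-form compatibility used elsewhere.

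The third step computes $S(\chi_{\mu_0}^{\rm ss})$ explicitly. By definition $S=\sum_i(-1)^{i+1}m^i\Delta^i$, so applying it to $\chi_{\mu_0}^{\rm ss}=\sum_{[W]}[W]$ and collecting the coefficient of each $[W]$ yields precisely the alternating sum over iterated filtrations of $W$ inside $\D{C}$, which is the definition of $F_W^\mu$. That is, $S(\chi_{\mu_0}^{\rm ss})=\sum_{[W]\in\module_{\mu_0}^{\rm ss}(Q)}F_W^\mu[W]$, where the filtrations are Jordan--H\"older filtrations in $\module_{\mu_0}(Q)$ and each term carries the braiding twist $\prod_{i>j}\innerprod{\gamma_i,\gamma_j}$ packaged into $F_W^\mu$ via the corollary. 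Combining with Step 2 gives the claimed identity.

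The main obstacle I anticipate is Step 1, the verification that $\chi_{\mu_0}^{\rm ss}$ is genuinely group-like \emph{within the subcategory} rather than in the ambient $H(Q)$. One must be careful that when $\Delta$ is computed in $\D{C}$, every middle term $W$ and every sub/quotient pair $(V,U)$ appearing really lies in $\module_{\mu_0}(Q)$: this is exactly the statement that $\module_{\mu_0}(Q)$ is an exact abelian subcategory closed under extensions, sub- and quotient-objects of semistable objects of the same slope, which is asserted in Section \ref{S:basic}. The twisted braiding factors $\innerprod{\br{U},\br{V}}$ introduced in Section \ref{S:Hopf} must be tracked to confirm they are absorbed consistently; but since all objects here share the single slope $\mu_0$, no sign or slope-ordering subtlety arises, and the computation should mirror the untwisted Lemma \ref{L:grouplike} almost verbatim. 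Everything else is then an immediate application of the Hopf-algebraic facts already established.
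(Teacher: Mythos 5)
Your argument is correct, but it takes a more elaborate route than the paper. The paper's proof is a one-line formal inversion: writing $\chi_{\mu_0}^{\rm ss}=[0]+x$ with $x=\chi_{\mu_0}^{\rm ss}-[0]$ supported on nonzero objects, the inverse is the geometric series $[0]-x+x^2-\cdots$, and since the multiplication in Section \ref{S:Hopf} is untwisted and $\module_{\mu_0}(Q)$ is closed under extensions, the coefficient of $[W]$ in $x^i$ is exactly the number $F_i(W)$ of $i$-step filtrations inside $\D{C}$, so the coefficient of $[W]$ in the alternating series is $F_W^\mu$ by definition. No group-likeness and no antipode are needed. You instead verify that $\chi_{\mu_0}^{\rm ss}$ is group-like in the Hall algebra of the exact subcategory, invoke $S(g)g=1=gS(g)$, and then compute $S(\chi_{\mu_0}^{\rm ss})$; this is a legitimate and conceptually appealing reading given the framework the section sets up, but it costs you two extra verifications (group-likeness within $\D{C}$ under the twisted comultiplication, and that $S$ restricts to an antipode there), and your final step still reduces to the same geometric-series computation, since for group-like $g=1+x$ one has $\bar\Delta^{t-1}(x)=x^{\otimes t}$ and hence $S(g)=\sum_i(-1)^ix^i$. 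One point in your Step 3 is stated incorrectly: the braiding factors $\prod_{i>j}\innerprod{\gamma_i,\gamma_j}$ are not ``packaged into $F_W^\mu$'' --- the quantity $F_W^\mu$ is a plain alternating count of filtrations with no twist. Rather, those factors cancel against the automorphism factors $a_{V_t}\cdots a_{V_1}/a_W$ via the iterated Riedtmann identity (equivalently, via the group-like property itself), which is precisely why the untwisted filtration counts survive. With that correction your proof closes, and it agrees with the paper's.
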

\begin{proof} By convention, $\chi_{\mu_0}^{\rm ss}$ contains the zero representation as a summand, and hence formally invertible. We denote $x:=\chi_{\mu_0}^{\rm ss}-[0]$. Formally we have that $(\chi_{\mu_0}^{\rm ss})^{-1}=[0]-x+x^2-x^3+\cdots$, then the coefficient of $[W]$ is exactly $F_W^\mu$ by definition.
\end{proof}

\section{Generating Series}

We fix a slope function $\mu$. Consider the following series in $\D{P}_{\innerprod{Q}}:R_{\mu_0}(Q)=\int \chi_{\mu_0}^{\rm ss}$ and $R(Q)=\int \chi$. We have already seen that $\int \chi_{\mu_0}^{\rm ss}=\sum_{\mu(\alpha)=\mu_0}r_\alpha^{\rm ss}(q)x^\alpha$ and $\int \chi=\sum_\alpha r_\alpha(q) x^\alpha$. So the latter seems much easier to compute. Lemma \ref{L:HNid} implies that they are related by $R(Q)=R_{\mu_1}(Q) R_{\mu_2}(Q)\cdots R_{\mu_k}(Q)\cdots$, where $\mu_k$ increase with $k$. We can also view $R(Q)=\int \chi$ as $\int \chi_0^{\rm ss}$ with respect to the zero slope function.

A stable representation $M$ is called {\em absolutely stable} if $M\otimes_k K$ is stable for every finite field extension $k\subset K$. Let us denote $a_\alpha$ the number of $\alpha$-dimensional absolutely stable representations and $m_\alpha=|\Mod_\alpha^{\rm ss}(Q)|$.

\begin{definition} The absolute (resp. relative) local representation Poincare series of $Q$ at $\mu_0$ is $A_{\mu_0}(Q)=\sum_{\mu(\alpha)=\mu_0} a_\alpha(q)x^\alpha$ (resp. $M_{\mu_0}(Q)=\sum_{\mu(\alpha)=\mu_0} m_\alpha(q)x^\alpha$).
The global counterparts $A(Q)$ and $M(Q)$ are defined by setting $\mu=\mu_0=0$. Our convention is that the relative ones have constant term $1$, but $0$ for the absolute ones.
\end{definition}

Recall from \cite[Section 2]{MR} that $\D{P}_{\innerprod{Q}}$ has a $\lambda$-ring structure in terms of Adams operations by
$$\psi_n(f(q,x_1,\dots,x_r))=f(q^n,x_1^n,\dots,x_r^n).$$ Using Adams operations or $\sigma$-operations, we can define an exponential operator: $$\Exp(f)=\sum_{k\geq 1}\sigma_k(f)=\exp(\sum_{k\geq 1}\frac{\psi_k(f)}{k}).$$ It has an inverse $\Log$ given by
$$\Log(f)=\sum_{k\geq 1}\frac{\mu(k)}{k}\psi_k(\log(f)),$$ where $\mu$ is the classical m\"{o}bius function. All multiplications involved in defining the above operations are the usual multiplication for polynomials.
The series $M_{\mu_0}(Q)$ has a form of zeta-function:
\begin{lemma} {\em \cite[Theorem 8.3]{R3}}
$M_{\mu_0}(Q)=\Exp(A_{\mu_0}(Q))$.
\end{lemma}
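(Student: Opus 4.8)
The statement to prove is the relation
$$M_{\mu_0}(Q)=\Exp(A_{\mu_0}(Q)),$$
where $M_{\mu_0}$ counts all semistable moduli points with the $\lambda$-ring $\Exp$ built from Adams operations on $\D{P}_{\innerprod{Q}}$, and $A_{\mu_0}$ records absolutely stable representations. The plan is to exploit the Jordan-Holder filtration (part (2) of the Harder-Narasimhan lemma) inside the exact subcategory $\module_{\mu_0}(Q)$ of fixed slope, where every semistable object is built from stable objects of the same slope. Since a semistable moduli point (a polystable representation) is a direct sum of stable objects, the generating series $M_{\mu_0}(Q)$ should factor as a "plethystic exponential" over the stable objects, and $\Exp$ is exactly the operator that implements this symmetric-algebra/free-commutative-monoid bookkeeping.

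First I would reduce the statement to a point count over a fixed finite field and its extensions. By Lemma \ref{L:Euler} and Lemma \ref{L:polycount}, once we verify the identity at the level of $|\Mod^{\rm ss}_\alpha(Q)(\B{F}_{q^r})|$ for all $r$, the polynomiality lets us promote it to an identity of counting polynomials, and the Adams operation $\psi_n$ is precisely the substitution $q\mapsto q^n$ that encodes passage to the degree-$n$ extension $\B{F}_{q^n}$. Thus the combinatorial content is: the number of polystable $\alpha$-dimensional representations over $\B{F}_q$ equals the number of ways to write $M\cong\bigoplus_j S_j^{\oplus m_j}$ as a direct sum of absolutely stable representations $S_j$ defined over extension fields, counted with the correct Galois/automorphism weights. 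This is exactly the content of the formula $\Exp(f)=\exp(\sum_k \psi_k(f)/k)$: a representation absolutely stable over $\B{F}_{q^d}$ but not over any subfield contributes a single Galois orbit of size $d$, and summing over multiplicities in the free commutative monoid on these orbits produces the plethystic exponential.

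The key steps, in order, are: (i) identify $M^{\rm ss}_\alpha(Q)$ set-theoretically with isomorphism classes of polystable representations, using that the GIT quotient separates closed (i.e. polystable) orbits; (ii) decompose each polystable representation uniquely into absolutely stable summands, tracking how a stable-but-not-absolutely-stable representation over $\B{F}_q$ corresponds to a Galois orbit of absolutely stable representations over some $\B{F}_{q^d}$; (iii) translate the resulting counting identity into generating-series language and recognize the right-hand side as $\Exp(A_{\mu_0}(Q))$ via the $\psi_k$-formula. I expect the main obstacle to be step (ii): correctly matching the Galois descent data for stable representations over $\B{F}_q$ with the Adams-operation weights $\psi_k(f)/k$, and verifying that the automorphism-group normalizations (the $1/a_M$ factors implicit throughout) combine so that each Galois orbit is counted exactly once with the right sign. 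This is the same subtlety that appears whenever one passes from "rational points" to "geometric points with Frobenius action," and it is where the absolute-versus-relative stability distinction does all the work.
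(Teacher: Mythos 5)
The paper does not prove this lemma at all: it is quoted verbatim from Reineke's survey \cite{R3}, Theorem 8.3, so there is no internal proof to compare against. Your outline is essentially the argument of the cited source (and of Mozgovoy--Reineke \cite{MR}): identify $\B{F}_q$-points of $\Mod_\alpha^{\rm ss}(Q)$ with S-equivalence classes, i.e.\ isomorphism classes of polystable representations; decompose a polystable representation into its stable (Jordan--H\"older) constituents; group those constituents into Frobenius orbits of absolutely stable representations over extension fields $\B{F}_{q^d}$; and recognize the count of multisets of such Galois orbits as the plethystic exponential $\Exp(A_{\mu_0}(Q))=\exp\bigl(\sum_{k\geq 1}\psi_k(A_{\mu_0}(Q))/k\bigr)$, exactly as in the Hasse--Weil zeta function of the stable locus. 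So the route is the right one. The one spot where your sketch is thinner than it needs to be is the arithmetic input in steps (i)--(ii): to equate $|\Mod_\alpha^{\rm ss}(Q)(\B{F}_q)|$ with the number of isomorphism classes of $\B{F}_q$-rational polystable representations (and, likewise, to know that $a_\alpha$ is both the number of isomorphism classes of absolutely stable representations over $\B{F}_q$ and the point count of the stable moduli) one needs Lang's theorem together with the fact that the stabilizer in $\GL_\alpha$ of a polystable point is a product of general linear groups, hence connected; this is what forces every Frobenius-fixed closed orbit to contain a rational point and all rational points of that orbit to be rationally conjugate. You allude to this (``rational points versus geometric points with Frobenius action'') but do not name the tool. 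On the other hand, your worry about $1/a_M$ normalizations is a red herring for this particular identity: both $M_{\mu_0}$ and $A_{\mu_0}$ count isomorphism classes with weight one, and $\Exp$ counts unordered multisets on the nose; the automorphism factors belong to Theorem \ref{T:GS}, not to this lemma.
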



Applying the character $\int$ to the identity in Lemma \ref{L:chiinv}, it is proven in \cite{MR} that $A_{\mu_0}(Q)$ and $R_{\mu_0}(Q)$ are related in $\D{P}_{\innerprod{Q}}$.

\begin{theorem} {\em \cite[Theorem 4.1]{MR}} \label{T:GS} $R_{\mu_0}(Q)\Exp(\frac{A_{\mu_0}(Q)}{1-q})=1$. In particular, both $a_\alpha$ and $m_\alpha$ are polynomials in $q$.
\end{theorem}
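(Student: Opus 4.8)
The plan is to apply the character $\int$ to the inversion identity of Lemma~\ref{L:chiinv} and then reorganize the resulting product over stable objects into a plethystic exponential. First I would note that $\chi_{\mu_0}^{\rm ss}$ is a unit of the completed Hall algebra (its constant term is $[0]$), so applying the algebra morphism $\int$ to $\chi_{\mu_0}^{\rm ss}(\chi_{\mu_0}^{\rm ss})^{-1}=[0]$ gives $R_{\mu_0}(Q)^{-1}=\int\big((\chi_{\mu_0}^{\rm ss})^{-1}\big)=\sum_{[W]\in\module_{\mu_0}^{\rm ss}(Q)}\frac{F_W^\mu}{a_W}x^{\br{W}}$, using $\int[W]=a_W^{-1}x^{\br W}$. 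By the Corollary to Lemma~\ref{L:Fw} applied to $\D{C}=\module_{\mu_0}^{\rm ss}(Q)$, whose simple objects are the $\mu$-stable representations of slope $\mu_0$, the coefficient $F_W^\mu$ vanishes unless $W=\bigoplus_{[S]}S^{m_S}$ is polystable, in which case $F_W^\mu=\prod_{[S]}(-1)^{m_S}q_S^{\binom{m_S}{2}}$. Since distinct stable objects of the same slope admit no nonzero homomorphisms, $\Aut_Q(W)=\prod_{[S]}\GL_{m_S}(\B{F}_{q_S})$, and a direct cancellation gives $F_W^\mu/a_W=\prod_{[S]}\big(\prod_{k=1}^{m_S}(1-q_S^k)\big)^{-1}$.

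Consequently $R_{\mu_0}(Q)^{-1}$ factors as a product over the stable $\B{F}_q$-objects $[S]$ of slope $\mu_0$, with local factor $\sum_{m\geqslant 0}x^{m\br S}\big/\prod_{k=1}^m(1-q_S^k)$. Here I would invoke Euler's $q$-exponential identity $\sum_{m\geqslant 0} z^m\big/\prod_{k=1}^m(1-Q^k)=\prod_{i\geqslant 0}(1-Q^i z)^{-1}$ (an identity of rational functions, hence valid formally in the completed algebra), which rewrites each local factor as $\prod_{i\geqslant 0}(1-q_S^i x^{\br S})^{-1}$. This already shows $R_{\mu_0}(Q)^{-1}=\prod_{[S]}\prod_{i\geqslant 0}(1-q_S^i x^{\br S})^{-1}$, an arithmetic zeta-type Euler product.

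The remaining and central step is to identify this Euler product with $\Exp\big(A_{\mu_0}(Q)/(1-q)\big)$. On one hand, $\Exp\big(A_{\mu_0}(Q)/(1-q)\big)=\prod_{i\geqslant 0}\Exp(q^iA_{\mu_0}(Q))$ expands, by the standard motivic zeta identity underlying $M_{\mu_0}(Q)=\Exp(A_{\mu_0}(Q))$, as a product over the closed points $P$ of the stable moduli of $\prod_{i\geqslant 0}(1-q^{i\deg P}x^{(\deg P)\delta_P})^{-1}$, where $\delta_P$ is the dimension vector of the corresponding absolutely stable representation. On the other hand, a stable $\B{F}_q$-object $S$ with $\End_Q(S)=\B{F}_{q^d}$ is exactly a degree-$d$ closed point, i.e.\ a Galois orbit of $d$ absolutely stable geometric points of dimension $\delta=\br S/d$; thus $q_S=q^{\deg P}$ and $\br S=(\deg P)\delta_P$, and the two Euler products agree factor by factor. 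This yields $R_{\mu_0}(Q)^{-1}=\Exp\big(A_{\mu_0}(Q)/(1-q)\big)$, equivalently the asserted identity. I expect this Galois-descent bookkeeping to be the main obstacle: the Adams operations defining $\Exp$ act on $A_{\mu_0}(Q)$ by the naive substitution $q\mapsto q^k$, which is \emph{not} the same as the object-wise action produced by the Hall-algebra product over $\B{F}_q$-objects with their individual $q_S$; reconciling the two is precisely the content of the closed-point/geometric-point dictionary, and is where absolute stability is forced to enter.

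Finally, for the polynomiality clause I would solve the identity for $A_{\mu_0}(Q)$, obtaining $A_{\mu_0}(Q)=(q-1)\Log\big(R_{\mu_0}(Q)\big)$, and combine it with $M_{\mu_0}(Q)=\Exp(A_{\mu_0}(Q))$. The coefficients $r_\alpha^{\rm ss}(q)$ of $R_{\mu_0}(Q)$ are the explicit rational functions of Corollary~\ref{C:int1}, whose only denominators are powers of cyclotomic factors; the factor $(1-q)$ clears the pole at $q=1$, and together with the polynomial-count property of $\Mod_\alpha^{\mu}(Q)$ from \cite{R2} this forces both $m_\alpha$ and $a_\alpha$ to lie in $\B{Z}[q]$. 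The routine verification that $\Log$ and $\Exp$ preserve these integrality and polynomiality properties on the relevant completed $\lambda$-ring is the only remaining bookkeeping.
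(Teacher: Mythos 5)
Your proposal is correct and follows exactly the route the paper indicates: apply the character $\int$ to the inversion identity of Lemma \ref{L:chiinv}, evaluate $F_W^\mu/a_W$ on polystable objects via the Corollary to Lemma \ref{L:Fw}, and repackage the resulting Euler product over stable objects as the plethystic exponential via the closed-point/Galois-orbit dictionary. The paper itself only states this first step and defers the remaining details --- precisely the two points you flag, namely reconciling the Adams operations with the object-wise $q_S$ and the rationality-implies-polynomiality bookkeeping --- to \cite{MR}.
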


\begin{example} If $Q$ has no oriented cycles, then the only simple representations are the one-dimensional ones corresponding to vertices, so $A(Q)=\sum_{v\in Q_0} x_v$, and thus $M(Q)=\prod_{v\in Q_0}(1-x_v)^{-1}$.
On the other hand, if $Q$ has only one vertex but $m$ loops, some interesting results are obtained in \cite{MR}.

For the ``next" case: $m$-arrow Kronecker quivers $K_m$, we know from a result of T. Weist \cite[Corollary 6.14]{W} that the Euler characteristic of the moduli space of $(d,d)$-dimensional stable representations of $K_m$ is $m$ for $d=1$ and zero otherwise. Since semi-stable representations can be counted by symmetric products of stable representations, we see that the Euler characteristic of the moduli space of $(d,d)$-dimensional stable representations of $K_m$ is the binomial coefficients:
$$\chi_c(\Mod_{(d,d)}^{\rm ss}(K_m),\B{Q}_l) =\sm{d+m-1\\m-1}.$$
Formally we can consider the generating series $\sum_{d} m_\alpha(K_m,q)x^{(d,d)}y^m$. The above implies that it equals $(1-(x^{(1,1)}+y))^{-1}$ at $q=1$. In other words, $\chi_c(\Mod_{(d,d)}^{\rm ss}(K_m),\B{Q}_l)$ is the $d$-th element of the $d+m-1$-th floor of Jia Xian's triangle\footnote{Jia Xian is 600 years younger than Pascal.}. However, before specializing at $q=1$, it is clearly not the usual quantum Jia Xian's triangle. The symmetry is already broken at level five:
$$1,[5],(q^2+1)(q^7+q^6+q^5+q+1),(q+1)(q^9+q^7+q^4+q^2+1),[5],1.$$
\end{example}


Finally we come back to the usual quantum setting: let $\D{P}_{(Q)}$ be the usual quantum polynomial algebra. Then an analog of Example \ref{Ex:int} says that $\int:[M]\mapsto \innerprod{\alpha,\alpha}^{\frac{1}{2}}\frac{t^\alpha}{a_M}$ is an $\D{P}_{(Q)}$-character of $H(Q)$.
When the subcategory $\module_{\mu_0}$ contains only one simple objects $S$, the generating series $R_{\mu_0}(Q)$ is of the form of quantum exponential. Recall the quantum exponential $\exp_q(z):=\sum_{n=0}^{\infty}\frac{z^n}{[n]!}$, and we denote $\B{E}(z):=\exp_q(\frac{q^{1/2}}{q-1}z)$. Readers can easily verify that $R_{\mu_0}(Q)=\B{E}(x^{\br{S}})$.

For Dynkin quivers, one can find at least two slope functions $\mu_1,\mu_2$ such that for any fixed number $\mu_0$, the subcategory $\module_{\mu_0}(Q)$ contains at most one simple object.
For $\mu_1$, the only stable objects are the simples $\{S_v\}_{v\in Q_0}$ have strictly decreasing slopes;
for $\mu_2$, the stable objects $\{E_i\}_{i=1\dots N}$ are precisely the indecomposable representations and their slopes increase from left to right in the Auslander-Reiten quiver. The existence of $\mu_2$ is nontrivial. It appears in an unpublished work of Hille-Juteau.

\begin{proposition} \label{P:dilog} {\em \cite{R4},\cite[Theorem 1.6]{Ke}} Let $Q$ be a Dynkin quiver, then we have
$$\B{E}(x^{\delta_1})\cdot\cdots\cdot\B{E}(x^{\delta_n})=\B{E}(x^{\epsilon_1})\cdot\cdots\cdot\B{E}(x^{\epsilon_N}),$$
where $\{\delta_i\}$ is the standard basis of $\mathbb{Z}_n$ and $\{\epsilon_j\}$ is the dimension vector for $E_j$.
\end{proposition}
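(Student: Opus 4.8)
The plan is to exhibit both sides of the identity as the image under the Hall character $\int$ of one and the same element of $H(Q)$, namely the group-like element $\chi=\sum_\alpha\chi_\alpha$, factored in two different ways. The crucial observation is that $R(Q)=\int\chi$ makes no reference whatsoever to a slope function, since $\chi=\sum_\alpha\chi_\alpha$ is defined without $\mu$; only its Harder--Narasimhan factorization depends on $\mu$. Playing the two slope functions $\mu_1$ and $\mu_2$ against each other then produces the two sides as two factorizations of the same series.

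First I would record the factorization of $\chi$. Summing the identity of Lemma \ref{L:HNid} over all dimension vectors $\alpha$ gives, in the completed Hall algebra, the formal product
\[
\chi=\prod_{\mu_0}\chi_{\mu_0}^{\rm ss},
\]
with factors in increasing-slope order and $\chi_{\mu_0}^{\rm ss}=\sum_{\mu(\beta)=\mu_0}\chi_\beta^{\rm ss}$. Because $\int$ is a $\D{P}_{(Q)}$-character, i.e.\ an algebra morphism, applying it factor by factor yields $R(Q)=\prod_{\mu_0}R_{\mu_0}(Q)$, again in increasing-slope order. For a Dynkin quiver there are only finitely many relevant slopes, so these products are finite and no convergence question arises.

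Next I would evaluate the factors for each slope function using the computation recalled just before the statement, namely that whenever $\module_{\mu_0}(Q)$ contains a single simple object $S$ one has $R_{\mu_0}(Q)=\B{E}(x^{\br{S}})$. For $\mu_1$ each nonempty $\module_{\mu_0}(Q)$ is generated by a vertex simple $S_v$ with $\br{S_v}=\delta_v$, so $R_{\mu_0}(Q)=\B{E}(x^{\delta_v})$, and listing the vertices in increasing $\mu_1$-slope order reproduces, after labeling the standard basis accordingly, the product $\B{E}(x^{\delta_1})\cdots\B{E}(x^{\delta_n})$. For $\mu_2$ the single simple object of each nonempty $\module_{\mu_0}(Q)$ is an indecomposable $E_j$ with $\br{E_j}=\epsilon_j$, so $R_{\mu_0}(Q)=\B{E}(x^{\epsilon_j})$, and increasing-slope order reproduces $\B{E}(x^{\epsilon_1})\cdots\B{E}(x^{\epsilon_N})$, in agreement with the order of the $E_j$ along the Auslander--Reiten quiver. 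Equating the two expressions for $R(Q)$ gives the identity.

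The main obstacle lies entirely in the input about slope functions, not in the Hall-algebra bookkeeping: one must know that on a Dynkin quiver there exist slope functions $\mu_1,\mu_2$ whose stable objects are respectively the vertex simples and all the indecomposables, with pairwise distinct slopes, so that every occurring subcategory $\module_{\mu_0}(Q)$ is genuinely generated by a single simple object. The existence of $\mu_1$ is immediate, but the existence of $\mu_2$, linearly ordering the indecomposables by slope compatibly with the Auslander--Reiten quiver, is the delicate point and rests on the unpublished Hille--Juteau result cited above. The only remaining care is notational: fixing the labeling of the standard basis and of the $E_j$ so that increasing-slope order matches the order written in the statement.
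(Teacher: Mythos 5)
Your argument is correct and is exactly the one the paper intends: the proposition is stated without an explicit proof (deferring to \cite{R4} and \cite{Ke}), but the whole final section builds up precisely your chain of reasoning --- the slope-independent series $R(Q)=\int\chi$, its increasing-slope factorization $R(Q)=R_{\mu_1}(Q)R_{\mu_2}(Q)\cdots$ from Lemma \ref{L:HNid}, the evaluation $R_{\mu_0}(Q)=\B{E}(x^{\br{S}})$ when $\module_{\mu_0}(Q)$ has a unique simple, and the comparison of the two slope functions $\mu_1,\mu_2$. You also correctly isolate the one nontrivial input, the Hille--Juteau existence of $\mu_2$, so nothing is missing.
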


\section*{Acknowledgement}
The author always feels very grateful to his adviser Harm Derksen. The author also wants to thank Professor Sergey Fomin for encouraging him at a critical time. Finally a special thanks to Professor Markus Reineke, who paved all the way and critically commented on the manuscript. Without them, nothing is possible.

%

\bibliographystyle{amsplain}

\end{document}